\newtheorem{theorem}{Theorem}
\newtheorem{lemma}{Lemma}
\newtheorem{proposition}{Proposition}
\def\me{\mathbbm{e}}
\def\bx{\bm{x}}
\def\bv{\bm{v}}
\def\bu{\bm{u}}
\def\bvarepsilon{\bm{\varepsilon}}
\def\D{\textup{d}}
\begin{document}

\newsavebox{\tablebox}

\begin{frontmatter}



\title{A short-memory operator splitting scheme for constant-Q viscoelastic wave equation}


\author[label1]{Yunfeng Xiong}
 \ead{xiongyf@math.pku.edu.cn}

\affiliation[label1]{organization={LMAM and School of Mathematical Sciences, Peking University},
            postcode={100871}, 
            state={Beijing},
            country={China}}

 \author{Xu Guo\corref{cor1}\fnref{label2}}
 \ead{guoxu@sdu.edu.cn}
 \cortext[cor1]{To whom correspondence}

\affiliation[label2]{organization={Geotechnical and Structural Research Center, Shandong University},
            city={Jinan},
            postcode={250061}, 
            state={Shandong},
            country={China}}

\begin{abstract}
We propose a short-memory operator splitting scheme for solving the constant-Q wave equation, where the fractional stress-strain relation contains multiple Caputo fractional derivatives with order much smaller than 1. The key is to exploit its extension problem by converting the flat singular kernels into strongly localized ones, so that the major contribution of weakly singular integrals over a semi-infinite interval can be captured by a few Laguerre functions with proper asymptotic behavior. Despite its success in reducing both memory requirement and arithmetic complexity, we show that numerical accuracy under prescribed memory variables may deteriorate in time due to the dynamical increments of projection errors. Fortunately, it can be considerably alleviated by introducing a suitable scaling factor $\beta > 1$ and pushing the collocation points closer to origin. An operator splitting scheme is introduced to solve the resulting set of equations, where the auxiliary dynamics can be solved exactly, so that it gets rid of the numerical stiffness and discretization errors. Numerical experiments on both 1-D diffusive wave equation and 2-D constant-Q $P$- and $S$-wave equations are presented to validate the accuracy and efficiency of the proposed scheme.
\end{abstract}


%
%
%
%
%
%
%

\begin{keyword}
Viscoelastic wave equation \sep Caputo fractional derivative \sep Laguerre spectral method \sep Scaling technique \sep Short-memory principle



\MSC[2010] \sep 
65M22 \sep 	
74S25  \sep	
41A05  \sep	
35R11  \sep	
33F05 	

\end{keyword}

\end{frontmatter}


\section{Introduction}
\label{sec.intro}

Seismic wave propagation has anelastic characteristics in real earth materials due to the loss of energy by the geometrical effect of the enlargement of the wavefront and by the intrinsic absorption of the earth \cite{Kjartansson1979,Virieux1986,CarcioneCavalliniMainardiHanyga2002}. Thus both the effects of attenuation and velocity dispersion should be taken into account for obtaining an accurate wavefield modeling. Among various models, Kjartansson's constant-Q model containing the fractional strain-stress relation provides a concise description of the wave attenuation by only two parameters: $Q$ for the portion of energy lost and $c_0$ for the phase velocity \cite{Kjartansson1979,bk:Mainardi2010}, and the quality factor $Q$ is usually observed as a frequency-independent (constant) parameter \cite{SunGaoLiu2019,XingZhu2021}. In most circumstances the loss caused by scattering and absorption is relatively small, so that the attenuation of the seismic energy is treated as a small perturbation on the propagation \cite{Kjartansson1979,SunGaoLiu2019}. Following the basic idea of Kjartansson's model, Carcione developed the theory of constant-Q $P$- and $S$- wave modeling and derived a set of parabolic equations involving multiple temporal Caputo fractional derivatives, which exhibit different attenuation levels in different propagation directions \cite{Carcione2009}. In the past few decades, 2-D and 3-D constant-Q wave modelings, as well as their variants,  have been drawing a growing attention in various seismic applications \cite{ZhuCarcione2014,ZhuHarrisBiondi2014,SunFomelZhuHu2016,ShuklaCarcionePestanaJaiswalOzdenvar2019}.

Despite its mathematical elegance, solving  the constant-Q wave equation remains a huge numerical challenge due to the extremely high memory requirement and high arithmetic complexity induced by the very small fractional power \cite{Diethelm2008}, which is about $10^{-3}$ to $10^{-1}$ in real viscoelastic media \cite{CarcioneCavalliniMainardiHanyga2002}.  In recent years, many sophisticated algorithms have been proposed to tackle the Caputo fractional derivatives in the diffusive wave models, such as the finite difference schemes based on the weighted Gr{\"u}nwald-Letnikov formula \cite{Carcione2009,WangVong2014} and those based on the interpolation polynomials \cite{SunZhaoSun2019,ShenLiSun2020}, the 
spectral collocation method \cite{YangChenHuangWei2016} and the meshless method \cite{HosseiniShivanianChen2016}, but their efficiency might still be hampered by the storage of long history. It has been reported by the author of \cite{Zhu2017} that the storage of $500$ memory length and the grid of $120^3$ in a 3-D orthorhombic example requires approximately $90$GB computer memory,  thereby dramatically restricting the numerical resolution and hampering its usage in real seismic applications such as reverse time migration.
 For this reason, the authors of \cite{ChenHolm2004,ZhuCarcione2014,ZhuHarris2014} suggested to replace the temporal fractional derivative by a spatial fractional Laplacian operator with the same dispersion relation and developed efficient numerical algorithms for the resulting memory-free nonlocal wave equations \cite{Carcione2010,ZhuCarcione2014,YaoZhuHussainKouri2017,ShuklaCarcionePestanaJaiswalOzdenvar2019}. However, it still requires to establish their rigorous mathematical connection, especially when the artificial boundary conditions are involved \cite{DuHanZhangZheng2018}.

Intuitively speaking, the drawback of long-memory requirement in the existing algorithms  is rooted in the flatness of the singular kernel in the Caputo fractional derivative, so that the remedy is to seek a localization of such kernel. Our short-memory operator splitting (SMOS) scheme is based on the extension problem of the Caputo derivative by introducing an extra memory variable $y \in (0, +\infty)$  \cite{MartinezSanzMarco1988,GaleMianaStinga2013}, and  the initial effective condition must be derived  from the fractional strain-stress relation to preserve the proper asymptotic behavior. In this way, the original constant-Q wave equation can be reformulated into an equivalent set of dynamical system without fractional derivatives, but involving an integral over a semi-infinite domain with a strongly localized Abel kernel. Such nonlocal component is usually resolved by the Laguerre-Gauss quadrature in the spirit of the Yuan-Agrawal method \cite{YuanAgrawal2002,LuHanyga2005} or the diffusive approximation \cite{BlancChiavassaLombard2014,Diethelm2008}. Compared with the sum-of-exponentials technique \cite{ShenLiSun2020}, the Yuan-Agrawal method allows a simpler truncation as the major contribution of the pre-asymptotic range of the weakly singular integral can be accurately captured by  only a few Laguerre-Gauss quadrature nodes (memory variables) \cite{Diethelm2008,bk:ShenTangWang2011}.

Regarding that the Yuan-Agrawal method still lacks a rigorous theoretical justification, we provide a thorough analysis of the dynamical errors  within the framework of generalized Laguerre interpolation \cite{Tang1993,Shen2000,MaSunTang2006,XiaShaoChou2020,GuoWangWang2006} and point out its potential weakness, that is, the dynamical increments of projection errors due to the impulse (non-decay) of the source term. In other words, the numerical accuracy under the prescribed number of collocation points may deteriorate sharply in time evolution. Fortunately, we also show that it can be significantly alleviated by introducing a scaling factor $\beta > 1$ in the Laguerre functions and pushing the collocations points closer to origin. 
Another challenge is the severe numerical stiffness in solving the auxiliary dynamics, which usually requires the usage of $A$-stable implicit integrators and introduces additional numerical errors \cite{Diethelm2008}. SMOS utilizes a key observation that although it breaks the Hamiltonian structure of the elastic counterpart, the constant-Q wave equation can still be evolved by an appropriate operator splitting. In particular, the auxiliary equations can be solved exactly, thereby getting rid of the numerical stiffness and significantly boosting the numerical stability.  Numerical experiments also validate that the Strang splitting can achieve a second-order convergence in time, and higher-order scheme can be achieved in principle by Yoshida's method \cite{Yoshida1990}. Combining these ingredients together allows us to maintain the accuracy, to significantly shorten the effective memory length and to reduce the arithmetic complexity simultaneously.


The rest of this paper is organized as follows. In Section \ref{sec.background} we briefly review the constant-Q  viscoelastic wave equation and derive its extension problem, as well as the proper initial condition. In Section \ref{sec.scheme}, we will first analyze the dynamical increments of Laguerre-Gauss projection errors and present how the scaling factor $\beta > 1$ enhances the accuracy, then give the operator splitting scheme. Numerical experiments on both 1-D diffusive wave equation and 2-D constant-Q wave equation are presented in Section \ref{sec.numerical} to validate the convergence, accuracy and efficiency of the proposed scheme. Specifically, the relation between the number of memory variables and accuracy will be carefully investigated. The conclusion and several discussions are drawn in Section \ref{sec.conclusion}.

\section{The extension problem of constant-Q wave equation}
\label{sec.background}

The dynamics of constant-Q wave propagation are governed by three sets of equations with $\bx \in \mathbb{R}^d$, $d\le 3$ \cite{Carcione2009}. The first set is the conservation of linear momentum:
\begin{equation}\label{conservation_momentum}
\rho(\bx) \frac{\partial^2}{\partial t^2} u_i(\bx, t) = \frac{\partial}{\partial x_j} \sigma_{ij}(\bx, t) + f_i(\bx, t), \quad i, j = 1, \dots, 3,
\end{equation}
where $\sigma_{ij}$ are the components of the stress tensor, $u_i$ are the components of the displacement vector, $\rho$ is the mass density and $f_i$ are components of the body forces per unit (source term). The summation over repeated indices $j$ is assumed in Eq.~\eqref{conservation_momentum}. The second set is the definition of strain tensor $\varepsilon_{ij}$, which can be obtained in terms of the displacement components as
\begin{equation}\label{defintion_strain}
\varepsilon_{ij}(\bx, t) = \frac{1}{2} \left(  \frac{\partial}{\partial x_i} u_j(\bx, t) +  \frac{\partial}{\partial x_j} u_i(\bx, t) \right), \quad i, j = 1, \dots, 3.
\end{equation}

The constitutive equation, or the stress-strain relation, involves a relaxation by power creep functions. Equivalently, it contains temporal Caputo fractional derivatives \cite{bk:Mainardi2010,Carcione2009},
\begin{equation}\label{stress_strain_relation}
\begin{split}
\sigma_{ij} = &\frac{M_P(\bx)}{\Gamma(1-2\gamma_P)} \left(\frac{t}{t_0}\right)^{-2\gamma_P} H(t) \ast \frac{\partial}{\partial t} \varepsilon_{ij} - \frac{2 M_S(\bx)}{\Gamma(1-2\gamma_S)} \left(\frac{t}{t_0}\right)^{-2\gamma_S} H(t) \ast \frac{\partial}{\partial t}\varepsilon_{ij}\\
= &\mathcal{E}(\bx) {_C}D_t^{2\gamma_P} \varepsilon_{kk} \delta_{ij} + 2 \mu(\bx){_C}D_t^{2\gamma_S} \left[ \varepsilon_{ij} - \varepsilon_{kk} \delta_{ij} \right],
\end{split}
\end{equation}
where the definition of the extended Caputo fractional derivative reads that
\begin{equation}\label{def.extended_Caputo_time_fractional_operator}
{_C}D_t^{\alpha} \bvarepsilon \coloneqq 
\left\{
\begin{split}
&\frac{1}{\Gamma(1 - \alpha)} \int_0^t (t - \tau)^{-\alpha} \left[\frac{\partial}{\partial \tau} \bvarepsilon(\bx, \tau)\right] \D \tau, &\quad 0 < \alpha < 1,\\
&\frac{1}{\Gamma(2 - \alpha)} \int_0^t (t - \tau)^{1-\alpha} \left[\frac{\partial^2}{\partial \tau^2} \bvarepsilon(\bx, \tau)\right] \D \tau, &\quad 1 < \alpha < 2,
\end{split}
\right.
\end{equation}
and $M_P(\bx) = \rho(\bx) c_P^2 \cos^{2} \left(\frac{\pi \gamma_P}{2}\right), M_S(\bx) = \rho(\bx) c_S^2 \cos^{2} \left(\frac{\pi \gamma_S}{2}\right)$ are bulk moduli, $c_P$ and $c_S$ are $P$-wave and $S$-wave velocities, respectively, $\Gamma$ is the Euler's Gamma function, $t_0$ is the same reference time, $\gamma_P, \gamma_S$ are dimensionless parameters, and $H$ is the Heaviside step function. The summation over repeated indices $k$ is assumed in Eq.~\eqref{stress_strain_relation}. The powers $\gamma_P, \gamma_S$ of the fractional derivatives are characterized by constant quality factors $Q_P$ and $Q_S$ for $P$- and $S$- waves, respectively.
\begin{equation}\label{def.constant}
\gamma_P = \pi^{-1} \tan^{-1} (Q_P^{-1}), \quad \gamma_S = \pi^{-1} \tan^{-1} (Q_S^{-1}).
\end{equation}

The main difficulty in solving the constant-Q wave equation lies in the singular kernel $(t - \tau)^{-2\gamma}$ in Eq.~\eqref{def.extended_Caputo_time_fractional_operator} since it is global and rather flat when $\gamma \ll 1$, so that direct discretization requires to store a long history to maintain the accuracy. A localization of the singular kernel can be achieved by introducing an auxiliary function $\Phi[\bvarepsilon](\bx, y, t)$ with an extra memory variable $y$, so that it extends the constant-Q wave equation into another nonlocal problem with an extra dimension. Such essential idea was exploited in the Yuan-Agrawal method \cite{YuanAgrawal2002,LuHanyga2005} and the diffusive approximation \cite{BlancChiavassaLombard2014}.

In the following part, we will derive the extension problem of the Caputo fractional derivative. As typical examples, the extension of unidimensional and two-dimensional constant-Q wave equations are provided. Then we try to give an initial condition for the auxiliary functions $\Phi[\bvarepsilon](\bx, y, t)$, which  was usually assumed to be zero in the previous literatures \cite{YuanAgrawal2002,LuHanyga2005,Diethelm2008}. But actually it must be derived from the stress-strain relation as the violation may lead to incorrect asymptotic behavior \cite{SchmidtGaul2006}.

\subsection{Extension problem}

The extension of the fractional derivative \eqref{def.extended_Caputo_time_fractional_operator} is known as the fractional power in functional analysis \cite{MartinezSanzMarco1988,YuanAgrawal2002}, which is also the basis of the Yuan-Agrawal method,
\begin{equation}
\begin{split}
{_C}D_t^\alpha \bvarepsilon(\bx, t) & = \frac{1}{\Gamma(1 - \alpha)} \int_0^t (t - \tau)^{-\alpha} \left[\frac{\partial}{\partial \tau} \bvarepsilon(\bx, \tau)\right] \D \tau \\
& = \frac{\sin(\pi \alpha)}{\pi} \int_{0}^{+\infty} \int_0^t \frac{\me^{-z} z^{\alpha - 1}}{(t - \tau)^\alpha} \left[\frac{\partial}{\partial \tau} \bvarepsilon(\bx, \tau)\right] \D \tau \D z \\
& = \frac{2 \sin(\pi \alpha)}{\pi} \int_{0}^{+\infty} \int_0^t y^{2\alpha - 1} \me^{-(t - \tau) y^2} \left[\frac{\partial}{\partial \tau} \bvarepsilon(\bx, \tau)\right] \D \tau \D y, 
\end{split}
\end{equation}
where the second equality utilizes the reflection formula for the Gamma function,
\begin{equation}\label{identity_gamma_function}
\Gamma(\alpha) = \int_0^{+\infty} \me^{-z} z^{\alpha - 1} \D z, \quad \Gamma(\alpha) \Gamma(1-\alpha) = \frac{\pi }{\sin(\pi \alpha)}.
\end{equation}
The third equality can be verified by the variable substitution $z = (t - \tau) y^2$.

Now we define the auxiliary function $\Phi[\bvarepsilon](\bx, y, t): \mathbb{R}^d \times \mathbb{R} \times [0, \infty) \to \mathbb{R}$ with an extra variable,
\begin{equation}\label{def.auxiliary}
\Phi[\bvarepsilon](\bx, y, t) = \int_0^t \me^{-(t - \tau) y^2} \left[\frac{\partial}{\partial \tau} \bvarepsilon(\bx, \tau)\right] \D \tau.
\end{equation}
It deserves to mention that our definition of $\Phi[\bvarepsilon](\bx, y, t)$ is slightly different from that in \cite{YuanAgrawal2002}, where the singular kernel is incorporated into the auxiliary function. This gets rid of the singularity at $y \to 0^+$ in the definition, as well as the dependence on the fractional order $\gamma$.

According to Eq.~\eqref{def.auxiliary}, it satisfies the auxiliary relaxed dynamics,
\begin{equation}\label{dynamics_response}
\frac{\partial }{\partial t} \Phi[\bvarepsilon](\bx, y, t) = \frac{\partial}{\partial t} \bvarepsilon(\bx, t) - y^2 \Phi[\bvarepsilon](\bx, y, t),
\end{equation}
and the fractional Caputo derivative is represented as a weakly singular integral
\begin{equation}\label{singular_integral}
{_C}D_t^\alpha \bvarepsilon(\bx, t) = \frac{2\sin(\pi \alpha)}{\pi} \int_0^{+\infty}  y^{2\alpha - 1} \Phi[\bvarepsilon](\bx, y, t) \D y.
\end{equation}
Let $g(\bx,t) = \frac{\partial}{\partial t} \varepsilon(\bx, t)$, then the exact solution of the auxiliary dynamics \eqref{dynamics_response} reads that
\begin{equation}\label{exact_solution}
\Phi[\bvarepsilon](\bx, y, t) = \me^{-y^2 t} \Phi[\bvarepsilon](\bx, y, 0) + \int_0^t \me^{-y^2 \tau} g(\bx, t-\tau) \D \tau.
\end{equation}
For sufficiently large $y$ and by integrations by parts, it has that
\begin{equation}\label{integrations_by_parts}
\begin{split}
\Phi[\varepsilon](\bx,y, t) = &\me^{-y^2 t} \phi(y, 0) + \frac{1}{y^2} \left[ g(\bx, t) - \me^{-y^2 t} g(\bx, 0)\right]  \\
&+ \int_0^t \me^{-y^2 \tau} \frac{\partial}{\partial t} g(\bx, t-\tau) \D \tau,
\end{split}
\end{equation}
which implies that Eq.~\eqref{singular_integral} is well defined since $\Phi[\bvarepsilon]$ decays as $\mathcal{O}(y^{-2})$.

By introducing the velocity vector $\bv(\bx, t) = \frac{\partial }{\partial t} \bu(\bx, t)$ to replace $\bu$, the extension problem of the constant-Q wave equation is cast into a parabolic system,
\begin{equation}
\frac{\partial }{\partial t} \begin{pmatrix} \bv \\ \bm{\sigma} \\ \Phi[\bvarepsilon] \end{pmatrix} =\begin{pmatrix} 0 & \mathcal{L}_3 & 0 \\0 & 0 & \mathcal{L}_2 \\ \mathcal{L}_1 & 0 & -y^2 \mathcal{I}_d \end{pmatrix} \begin{pmatrix} \bv \\ \bm{\sigma} \\ \Phi[\bvarepsilon] \end{pmatrix} + \begin{pmatrix} \bm{f} \\ 0  \\ 0\end{pmatrix}.
\end{equation}
or equivalently, 
\begin{equation}\label{attenuated_wave_eqn_set}
\frac{\partial }{\partial t} \begin{pmatrix} \bv \\ \Phi[\bvarepsilon] \end{pmatrix} =\begin{pmatrix} 0 & \mathcal{L}_3 \circ \mathcal{L}_2 \\ \mathcal{L}_1 & -y^2 \mathcal{I}_d \end{pmatrix} \begin{pmatrix} \bv \\ \Phi[\bvarepsilon] \end{pmatrix} + \begin{pmatrix} \bm{f} \\ 0 \end{pmatrix}
\end{equation}
with $\mathcal{I}_d$ a $d\times d$ identity matrix. The operator $y^2 \mathcal{I}_d$ breaks the Hamiltonian structure of the elastic wave equation and results in a relaxation in $\Phi[\bvarepsilon]$.

\subsection{Typical examples}

Now we give two typical examples of constant-Q wave equation that characterizes an interpolated regime between the wave propagation and the heat diffusion. In unidimensional case, the stress-strain relation is that 
\begin{equation}\label{1d_stress_strain}
\sigma(x, t) = C \rho(x) {_C}D_t^{2\gamma} \varepsilon(x, t),
\end{equation}
where $C$ is a constant depending on $c_0, \gamma, t_0$. For its extension, the linear bounded operator $\mathcal{L}_2$ and the gradient operators $\mathcal{L}_1$,  $\mathcal{L}_3$ are given by
\begin{align}
\mathcal{L}_1 v(x, t) & = \frac{1}{\rho(x)} \frac{\partial}{\partial x} v(x, t), \\
(\mathcal{L}_2 \Phi[\bvarepsilon])(x, t) & = \frac{2C\sin(2\pi \gamma)}{\pi}  \left[\rho(x) \int_0^{+\infty} y^{4\gamma-1} \Phi[\bvarepsilon](x, y, t) \D y\right], \label{memory_strain_relation}\\
\mathcal{L}_3 \sigma(x, t) &=  \rho(x) \frac{\partial}{\partial x} \sigma(x, t).
\end{align}

Equivalently, substituting \eqref{1d_stress_strain} into Eqs.~\eqref{conservation_momentum} and \eqref{defintion_strain} yields that 
\begin{equation}
\frac{\partial^2}{\partial t^2} v(x, t) = \frac{C}{\rho(x)} \frac{\partial}{\partial x}\left(\rho(x)\frac{\partial}{\partial x}  {_C}D_t^{2\gamma} v(x, t)\right) + \frac{\partial}{\partial t} f(x, t).
\end{equation}
When $\rho(x) = \rho_0$ is constant and $\frac{\partial}{\partial t}f(x, t) \equiv 0$, it reduces to the standard diffusive wave equation
\begin{equation}\label{diffusive_wave_eqn}
{_C}D_t^{2- 2\gamma} v(x, t) = C \frac{\partial^2}{\partial x^2} v(x, t).
\end{equation}
For $v(x, 0^+) = v_0(x)$ and $\frac{\partial }{\partial t}v (x, 0^+) = 0$, the exact solution reads that
\begin{equation}\label{exact_solution_1d_diffusive}
\begin{split}
v(x, t) = \int_{-\infty}^{+\infty} G(x - y, t; 1 - \gamma) v_0(y) \D y, 
\end{split}
\end{equation}
where the Green's function $G(x, t; \nu)$ is given by  
\begin{equation}
G(x, t; \nu) = \frac{1}{2\sqrt{C} t^{\nu}} M_{\nu}\left(\frac{x}{\sqrt{C} t^{\nu}}\right), 
\end{equation}
and the Mainardi function $M_{\nu}(x)$ is a special kind of the Wright function of the second kind \cite{bk:Mainardi2010}, namely, $M_{\nu}(z) \coloneqq W_{-\nu, 1 - \nu}(-z), 0 < \nu < 1$.

Intuitively, the temporal fractional PDE \eqref{diffusive_wave_eqn} describes the intermediate state of pure diffusion and pure wave dispersion, known as the fractional diffusion-wave phenomenon \cite{bk:Mainardi2010}. The Mainardi function under different $\nu$ is presented in Figure \ref{Mainardi_function}.
When $\gamma = 1/2$, it corresponds to the heat kernel and describes the wave dissipation. 
When $\gamma = 0$, it corresponds to the wave equation and describes the wave dispersion.
For $0 < \gamma < 1/2$, the solution may exhibit both wave dissipation and velocity dispersion.

Now considering the wave propagation in two-dimensional $(x, z)$-domain, the stress-strain relation reads that \cite{Carcione2009,ZhuCarcione2014}
\begin{equation}\label{2d_stress_strain}
\begin{split}
& \frac{\sigma_{11}(x, z, t)}{\rho(x, z)} =  C_P \cdot {_C}D_t^{\gamma_P} (\varepsilon_{11}(x, z, t) + \varepsilon_{33}(x, z, t)) - 2 C_S \cdot {_C}D_t^{\gamma_S} \varepsilon_{33}(x, z, t), \\ 
& \frac{\sigma_{33}(x, z, t)}{\rho(x, z)} =  C_P \cdot{_C}D_t^{\gamma_P} (\varepsilon_{11}(x, z, t) + \varepsilon_{33}(x, z, t)) - 2 C_S \cdot {_C}D_t^{\gamma_S} \varepsilon_{11}(x, z, t), \\ 
&  \frac{\sigma_{13}(x, z, t)}{\rho(x, z)} = 2  C_S \cdot {_C}D_t^{\gamma_S}  \varepsilon_{13}(x, z, t).
\end{split}
\end{equation}
and the linear operators in its extension problem read that
\begin{equation}
\mathcal{L}_1 \begin{pmatrix} v_1 \\ v_3 \end{pmatrix} 
= \frac{1}{\rho(x, z)}
\begin{pmatrix}
\frac{\partial }{\partial x} & 0 \\ 
0 & \frac{\partial }{\partial z} 
\\ \frac{1}{2}\frac{\partial }{\partial z} & \frac{1}{2}\frac{\partial }{\partial x}
\end{pmatrix}
 \begin{pmatrix} v_1 \\ v_3 \end{pmatrix} 
\end{equation}
and 
\begin{equation}
\begin{split}
\mathcal{L}_2 \begin{pmatrix} \Phi[\varepsilon_{11}] \\ \Phi[\varepsilon_{33}] \\ \Phi[\varepsilon_{13}] \end{pmatrix}
= &\rho
\begin{pmatrix}
 \tilde{C}_{P, \gamma_P} \int_0^{+\infty}  y^{4 \gamma_P -1} \left(\Phi[\varepsilon_{11}]+  \Phi[\varepsilon_{33}]\right) \D y  \\
 \tilde{C}_{P, \gamma_P} \int_0^{+\infty}  y^{4 \gamma_P -1} \left(\Phi[\varepsilon_{11}]+  \Phi[\varepsilon_{33}]\right) \D y  \\
0
\end{pmatrix} \\
&+ \rho
\begin{pmatrix}
-2  \tilde{C}_{S, \gamma_S} \int_0^{+\infty} y^{4 \gamma_S -1}\Phi[\varepsilon_{33}] \D y \\
-2  \tilde{C}_{S, \gamma_S}  \int_0^{+\infty} y^{4 \gamma_S -1}\Phi[\varepsilon_{11}] \D y \\
2  \tilde{C}_{S, \gamma_S}  \int_0^{+\infty} y^{4 \gamma_S -1}\Phi[\varepsilon_{13}] \D y
\end{pmatrix}, 
\end{split}
\end{equation}
where $\tilde{C}_{P, \gamma_P} = \frac{2\sin(2\pi \gamma_P)}{\pi} C_P$, $\tilde{C}_{S, \gamma_S} = \frac{2\sin(2\pi \gamma_S)}{\pi} C_S$, and
\begin{equation}
\mathcal{L}_3 \begin{pmatrix} \sigma_{11} \\ \sigma_{33} \\ \sigma_{13} \end{pmatrix} = \rho(x, z)
\begin{pmatrix}
\frac{\partial }{\partial x} & 0 &  \frac{\partial }{\partial z} \\ 
0& \frac{\partial }{\partial z} &  \frac{\partial }{\partial x} \\
\end{pmatrix} 
\begin{pmatrix} \sigma_{11} \\ \sigma_{33} \\ \sigma_{13} \end{pmatrix}.
\end{equation}
Formally, when $\gamma_S = \gamma_P = 0$, the fractional derivatives reduce to identity operators. The whole set of equations are equalivalent  to the first-order hyperbolic system in the elastic regime \cite{Virieux1986}.


\subsection{Initial condition for the extension problem with proper asymptotics}

One has to be very careful about the initial condition of $\Phi[\bvarepsilon](\bx, y, t)$ as simply posing zero initial condition may result in an incorrect asymptotic behavior \cite{SchmidtGaul2006}. Actually, it requires to satisfy both the stress-strain relation \eqref{stress_strain_relation} and 
\begin{equation}
{_C}D_t^\alpha \bvarepsilon(\bx, t) |_{t = 0^+} = \frac{2\sin(\pi \alpha)}{\pi} \int_0^{+\infty}  y^{2\alpha - 1} \Phi[\bvarepsilon](\bx, y, t) \big |_{t = 0^+}\D y.
\end{equation}
For unidimensional case, it requires to satisfy
\begin{equation}
\sigma(x, t) |_{t = 0^+} = \frac{2 C\sin(2 \pi \gamma) \rho(x)}{\pi} \int_0^{+\infty}  y^{4\gamma - 1} \Phi[\varepsilon](x, y, t) |_{t = 0^+} \D y. 
\end{equation}
so that the initial condition of $\Phi[\bvarepsilon](x, y, t)$ should be
\begin{equation}
\Phi[\varepsilon](x, y, t) |_{t = 0^+} = \frac{\Gamma(1 - 2\gamma)}{C\rho(x)} \cdot \me^{-y^2} \sigma(x, t) |_{t = 0^+},
\end{equation}
Here we utilize Eq.~\eqref{identity_gamma_function} and the following identity $\Gamma(2\gamma) = 2\int_{0}^{+\infty} y^{4\gamma - 1} \me^{- y^2 } \D y$.

Similarly, the initial conditions of the auxiliary functions in two-dimensional case are given by 
\begin{equation}
\begin{split}
&\Phi[\varepsilon_{11}](x, z, y, t) |_{t=0^+} = \frac{\me^{-y^2}}{\rho(x, z)}\left[a_{P, S} (\sigma_{11} + \sigma_{13})  +  b_{S} (\sigma_{11} - \sigma_{13})\right](x, z, t) |_{t=0^+}, \\
&\Phi[\varepsilon_{33}](x, z, y, t) |_{t=0^+} = \frac{\me^{-y^2}}{\rho(x, z)}\left[a_{P, S} (\sigma_{11} + \sigma_{13})  +  b_{S} (\sigma_{11} - \sigma_{13})\right](x, z, t) |_{t=0^+}, \\
&\Phi[\varepsilon_{13}](x, z, y, t) |_{t=0^+}  = \frac{\me^{-y^2}}{\rho(x, z)} b_S \sigma_{13}(x, z, t) |_{t=0^+},
\end{split}
\end{equation}
where
\begin{equation}
a_{P, S} = \frac{ \Gamma(1 - 2\gamma_P) \Gamma(1 - 2\gamma_S)}{4C_P \Gamma(1 - 2\gamma_S) - 4C_S \Gamma(1 - 2\gamma_P)}, \quad b_S = \frac{\Gamma(1 - 2\gamma_S)}{4 C_S }.
\end{equation}
In particular, when the initial stress tensor is zero, it recovers the setting in the existing literatures.

\section{Short-memory operator splitting scheme}
\label{sec.scheme}

The standard treatment for evaluating the weakly singular integral \eqref{singular_integral} over a semi-infinite domain, as suggested in the Yuan-Agrawal method and the diffusive approximation \cite{YuanAgrawal2002,LuHanyga2005,BlancChiavassaLombard2014,Diethelm2008}, is the Laguerre-Gauss quadrature, and the auxiliary function $\Phi[\bvarepsilon](\bx, y, t)$ at each time step has to be stored in computation. This endows our short-memory principle with two-fold meanings. First, it requires to reduce the number of  Laguerre-Gauss quadrature nodes to alleviate the storage of $\Phi[\bvarepsilon](\bx, y, t)$. Owing to the localized Abel kernel $y^{4\gamma-1}, \gamma \ll 1$ in Eq.~\eqref{singular_integral}, we can use only a few Laguerre-Gauss quadrature nodes to capture the major contribution of the pre-asymptotic range \cite{Diethelm2008,bk:ShenTangWang2011} and avoid the redundant nodes in the post-asymptotic range. Second, it is relevant to choose an appropriate scaling factor $\beta > 1$ to avoid the deterioration of numerical accuracy in dynamics.

To illustrate the latter point, we need to establish a rigorous theoretical analysis of the spectral approximation within the framework of generalized Laguerre interpolation \cite{GuoWangWang2006}. To the best of our knowledge, the mathematical analysis of the Yuan-Agarawl method and the diffusive approximation is rarely found in literatures except that based on asymptotic analysis \cite{Diethelm2008}.  We will show that there is a dynamical increment of projection error, which is induced by the impulse of the source term in the auxiliary relaxed dynamics \eqref{dynamics_response}. As a result, it hampers the accuracy of the Laguerre-Gauss quadrature due to the ill-posedness of the Laguerre functions at infinity. This partially accounts for the criticism on the original Yuan-Agrawal method \cite{SchmidtGaul2006,Diethelm2008}. Fortunately, we will show that  the growth of errors can be considerably alleviated by using the scaling technique \cite{Tang1993,Shen2000,bk:ShenTangWang2011}. 

An efficient splitting scheme is then introduced to solve the set of equations. The relevant point is to solve the dynamics of auxiliary functions $\Phi[\bvarepsilon](\bx, y, t)$ exactly, thereby avoiding both the numerical stiffness and numerical errors in the standard ODE solvers. We mainly focus on the Strang splitting for brevity, but higher-order scheme, such as the Yoshida approximation, can be straightforwardly obtained \cite{Yoshida1990}.

\subsection{Generalized Laguerre-Gauss projection and scaling}

Regarding the fact that the Laguerre functions grow at infinity, we introduce a smooth truncating function $\chi(y)$ with sufficiently large cut-off radius $r_c$. Thus the domain is decomposed into a pre-asymptotic range and a post-asymptotic range,
 \begin{equation}\label{decomposition}
 \begin{split}
 \int_0^{+\infty}  y^{4\gamma - 1} \Phi[\bvarepsilon](\bx, y, t)  \D y =  &\int_0^{+\infty}\omega_{4\gamma-1, \beta}\widetilde{\Phi}[\bvarepsilon](\bx, y, t) \D y \\
 &+  \int_0^{+\infty}  y^{4\gamma - 1} \Phi[\bvarepsilon](\bx, y, t) (1 - \chi( \beta y)) \D y.
 \end{split}
 \end{equation}
  where the weight is $\omega_{\alpha, \beta}(y) = y^{\alpha} \me^{-\beta y}$ and $\beta$ is called the scaling factor, 
 \begin{equation}
\widetilde{\Phi}[\bvarepsilon](\bx, y, t) =  \xi(\beta y) \Phi[\bvarepsilon](\bx, y, t), \quad \xi( y) = \me^{ y} \chi( y). 
 \end{equation}

The first term of Eq.~\eqref{decomposition} can be evaluated by  generalized Laguerre-Gauss quadrature,
 \begin{equation}\label{Laguerre_Gauss_quadarture}
 \int_0^{+\infty} y^{4\gamma - 1} \me^{-\beta y} \left[\me^{\beta y} \Phi[\bvarepsilon](\bx, y, t) \right] \D y \approx \sum_{j=0}^{M} \omega_j^{(4\gamma-1, \beta)} \me^{\beta y_j^{(4\gamma-1, \beta)}}  \Phi[\bvarepsilon](\bx, y_j^{(4\gamma-1, \beta)}, t).
\end{equation}
Following \cite{GuoWangWang2006}, there is a simple relation between $y_j^{(\alpha, 1)}$ and $y_j^{(\alpha, \beta)}$, as well as $\omega_j^{(\alpha,1)}$ and $\omega_j^{(\alpha, \beta)}$,
\begin{equation}
y_j^{(\alpha,\beta)} = {\beta}^{-1} y_j^{(\alpha,1)}, \quad \omega_j^{(\alpha, \beta)} = {\beta^{-(\alpha+1)}} \omega_j^{(\alpha,1)},
\end{equation}
where $\{y_j^{(\alpha,1)}\}_{j=0}^M$ are the zeros of the Laguerre functions $\mathscr{L}_{M+1}^{(\alpha)}(y)$, and the weights are given by
\begin{equation}
\omega_j^{(\alpha,1)} = \frac{\Gamma(M+\alpha + 1)}{(M+\alpha+1)(M+1)!} \frac{y_j^{(\alpha)}}{[L^{(\alpha)}_{M}(y_j^{(\alpha)})]^2}, \quad 0 \le j \le M.
\end{equation}
When $\beta = 1$, it reduces to the improved Yuan-Agrawal method \cite{Diethelm2008}.

The second part of Eq.~\eqref{decomposition} decays as
\begin{equation}\label{2nd_decay}
\int_0^{+\infty} y^{4\gamma-1} \Phi[\bvarepsilon](\bx, y, t)(1 - \chi(\beta y)) \D y\lesssim \int_0^{+\infty}  y^{4\gamma - 3} (1 - \chi(\beta y)) \D y \lesssim \left(\frac{r_c}{\beta}\right)^{4\gamma - 2}.
\end{equation}
Such decay rate is adequate for the convergence of the weakly singular integral, but produces very slow algebraic convergence rate for the Laguerre-Gauss quadrature after the pre-asymptotic range \cite{bk:ShenTangWang2011}, or even leads to numerical instability for too large $M$ due to the exponential factors.

 \subsection{Dynamical increments of projection error}
 
 The formula \eqref{Laguerre_Gauss_quadarture} essentially utilizes the Laguerre-Galerkin approximation to the relaxed dynamics \eqref{dynamics_response}. To characterize the numerical error, we introduce an $L_{\omega_{\alpha, \beta}}^2$-orthogonal projection $\mathcal{P}_{M, \alpha, \beta}$ with $\alpha = 4\gamma -1$ \cite{Shen2000,GuoWangWang2006},
\begin{equation}
\mathcal{P}_{M, \alpha, \beta} \widetilde{\Phi}[\bvarepsilon](\bx, y, t) = \sum_{m=0}^{M} a_m(\bx, t) \mathscr{L}_{m}^{(\alpha, \beta)}(y),
\end{equation}
where $\mathscr{L}_{m}^{(\alpha, \beta)}(y)$ are generalized Laguerre-Gauss polynomials with $\mathscr{L}_{m}^{(\alpha, 1)}(y) = \mathscr{L}_{m}^{(\alpha)}(y)$ \cite{GuoWangWang2006,XiaShaoChou2020},
\begin{equation}
a_m(\bx, t) = \frac{1}{\gamma_{m}^{(\alpha,\beta)}}(\widetilde{\Phi}[\bvarepsilon], \mathscr{L}_{m}^{(\alpha, \beta)})_{\omega_{\alpha, \beta}}, \quad (u, v)_{\omega_{\alpha, \beta}} =\int_0^{+\infty} y^\alpha \me^{-\beta y} u(y) v(y)  \D y,
\end{equation}
and $\gamma_{m}^{(\alpha,\beta)} = \frac{\Gamma(m+\alpha+1)}{\beta^{\alpha+1} \Gamma(m+1)}$, with the semi-norm $|\cdot|_{A^r_{\alpha, \beta}}$ and norm $\Vert \cdot \Vert_{A^r_{\alpha, \beta}}$
\begin{equation}
|v|_{A^r_{\alpha, \beta}}^2 = \Vert \partial_y^r v \Vert_{\omega_{\alpha+r, \beta}}^2 = ( \partial_y^r v ,  \partial_y^r v )_{\omega_{\alpha+r, \beta}}, \quad \Vert v \Vert_{A^r_{\alpha, \beta}} = \left(\sum_{k=0}^r |v|_{A^k_{\alpha, \beta}}^2 \right)^{1/2}. 
\end{equation} 
and the orthogonal relation $( \mathscr{L}_{m}^{(\alpha, \beta)},  \mathscr{L}_{l}^{(\alpha, \beta)})_{\omega_{\alpha, \beta}} = \gamma_{m}^{(\alpha, \beta)} \delta_{l,m}$.

Replacing $\widetilde{\Phi}[\bvarepsilon](\bx, y, t)$ by the projection $\mathcal{P}_{M, \alpha, \beta} \widetilde{\Phi}[\bvarepsilon](\bx, y, t)$, and using the fact that \cite{GuoWangWang2006}
\begin{equation}
 \int_0^{+\infty}\omega_{\alpha, \beta}(y) \mathscr{L}_{m}^{(\alpha, \beta)}(y) \D y =  \sum_{j = 0}^{M} \omega_j^{(\alpha, \beta)}  \mathcal{L}^{(\alpha, \beta)}_{m} (y_j^{(\alpha, \beta)}), \quad 0 \le m \le M,
\end{equation}
where $y_j^{(\alpha, \beta)}$ are roots of the generalized Laguerre polynomial $\mathscr{L}_{M+1}^{(\alpha, \beta)}(y)$, it arrives at
\begin{equation*}
\begin{split}
 \int_0^{+\infty}\omega_{\alpha, \beta}(y) \mathcal{P}_{M, \alpha, \beta} \widetilde{\Phi}[\bvarepsilon](\bx, y, t) \D y &= \sum_{j=0}^{M} \omega_j^{(\alpha, \beta)} \sum_{m=0}^{M+1} a_m(\bx, t) \mathscr{L}_{m}^{(\alpha, \beta)}(y_j^{(\alpha, \beta)}) \\
 & \approx \sum_{m=0}^M \omega_j^{(\alpha, \beta)} \me^{\beta y_j^{(4\gamma-1, \beta)}}  \Phi[\bvarepsilon](\bx, y_j^{(\alpha)}, t) \chi(\beta y_j^{(\alpha)}),
 \end{split}
\end{equation*}
 which recovers the formula \eqref{Laguerre_Gauss_quadarture} when $y_M^{(\alpha)} < r_c$.  We need to analyze the projection error 
\begin{equation}
\mathcal{E}_{M, \alpha, \beta}(\bx, y, t) = \widetilde{\Phi}[\bvarepsilon](\bx, y, t) - \mathcal{P}_{M, \alpha, \beta} \widetilde{\Phi}[\bvarepsilon](\bx, y, t)
\end{equation}
as the norm $\Vert \mathcal{E}_{M, \alpha, \beta}(\bx, y, t) \Vert_{\omega_{\alpha, \beta}}$ can be used to bound the numerical error of the generalized Laguerre-Gauss formula \eqref{Laguerre_Gauss_quadarture},
\begin{equation}
\Big | \int_0^{+\infty}\omega_{\alpha, \beta}(y) \mathcal{E}_{M, \alpha, \beta}(\bx, y, t)  \D y \Big | \le \big | \int_0^{+\infty}\omega_{\alpha, \beta}(y) \D y \big |^{\frac{1}{2}}  \Vert \mathcal{E}_{M, \alpha, \beta}(\bx, y, t) \Vert_{\omega_{\alpha, \beta}}.
\end{equation}
The following theorem characterizes the dynamical bound of $\Vert \mathcal{E}_{M, \alpha, \beta}(\bx, y, t) \Vert_{\omega_{\alpha, \beta}}$.  

\begin{theorem}\label{error_bound} Suppose $\Vert \Phi[\varepsilon](\bx, y, 0) \Vert_{A^1_{\alpha, \beta}} < \infty$ and $\max_{\bx,t} |g(\bx, t)| < \infty$, then for any $\alpha > - 1$ and $\beta$, it has that 
\begin{equation*}
\begin{split}
&\Vert \mathcal{E}_{M, \alpha, \beta}(\bx, y, t) \Vert_{\omega_{\alpha, \beta}} \le  M^{-\frac{1}{2}} \beta^{-\frac{\alpha}{2} -1} t \left(\beta \Vert \partial_y \xi  \Vert_{\omega_{\alpha, 1}} + 2 \beta^{-\frac{1}{2}} t \Vert \xi \Vert_{\omega_{\alpha+1, 1}} \right)  \max_{\bx,t} |g(\bx, t)|\\
&\quad \quad + M^{-\frac{1}{2}} \beta^{-\frac{\alpha}{2} -1} \left(\beta\Vert \partial_y \xi  \Vert_{\omega_{\alpha, 1}} + 2\beta^{-\frac{1}{2}} t \Vert \xi \Vert_{\omega_{\alpha+1, 1}} + \Vert \xi \Vert_{\omega_{\alpha, 1}}\right) \Vert \Phi[\varepsilon](\bx, y, 0) \Vert_{A^1_{\alpha, \beta}}  .
\end{split}
\end{equation*}
\end{theorem}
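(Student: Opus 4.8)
The plan is to reduce everything to the generalized Laguerre orthogonal projection estimate and then feed into it the exact Duhamel representation of $\Phi[\varepsilon]$. Concretely, I would first invoke the standard $r=1$ approximation result for the scaled projection $\mathcal{P}_{M,\alpha,\beta}$ (the estimate underlying \cite{GuoWangWang2006,Shen2000}), which controls $\Vert \mathcal{E}_{M,\alpha,\beta}\Vert_{\omega_{\alpha,\beta}}$ by $M^{-1/2}$ times a weighted norm of $\partial_y\widetilde{\Phi}[\varepsilon]$, i.e. essentially $|\widetilde{\Phi}[\varepsilon]|_{A^1_{\alpha,\beta}}$. This single step collapses the projection error into the task of estimating one $y$-derivative of $\widetilde{\Phi}[\varepsilon]=\xi(\beta y)\Phi[\varepsilon]$ in a weighted $L^2$ norm; the explicit $M^{-1/2}$ prefactor in the statement is exactly the convergence rate furnished by this lemma, so all subsequent work is to produce the bracketed coefficients.

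Next I would differentiate through the product, $\partial_y\widetilde{\Phi}=\beta\,\xi'(\beta y)\Phi+\xi(\beta y)\,\partial_y\Phi$, and substitute the exact solution \eqref{exact_solution}, splitting $\Phi$ into an initial-data part $e^{-y^2 t}\Phi[\varepsilon](\bx,y,0)$ and a source (Duhamel) part $\int_0^t e^{-y^2\tau}g(\bx,t-\tau)\,\D\tau$. These two parts are estimated separately and produce the two lines of the bound: the source part is controlled using $\max_{\bx,t}|g|$, while the initial part is controlled using $\Vert\Phi[\varepsilon](\bx,y,0)\Vert_{A^1_{\alpha,\beta}}$, matching the two hypotheses. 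The key pointwise-in-$y$ estimates are $e^{-y^2 t}\le 1$, $\int_0^t e^{-y^2\tau}\,\D\tau\le t$, and $\big|\partial_y\int_0^t e^{-y^2\tau}g\,\D\tau\big|\le 2y\int_0^t\tau e^{-y^2\tau}\,\D\tau\,\max|g|\le y t^2\max|g|$, together with $\partial_y\!\big(e^{-y^2 t}\Phi_0\big)=-2yt\,e^{-y^2 t}\Phi_0+e^{-y^2 t}\partial_y\Phi_0$. This is where the $t$ and $t^2$ growth enters: the $t$ comes from the length of the Duhamel integration window and the $t^2$ from differentiating it in $y$. This is precisely the \emph{dynamical increment} the theorem is meant to expose, and it is what a scaling $\beta>1$ later suppresses.

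The remaining labor, and the main obstacle, is the weighted-norm bookkeeping. Each term has the schematic form $\Vert\,[\xi(\beta y)\text{ or }\xi'(\beta y)]\cdot y^{k}\cdot(\text{data})\,\Vert_{\omega}$, and I would evaluate these by the substitution $s=\beta y$, using the scaling identities $y_j^{(\alpha,\beta)}=\beta^{-1}y_j^{(\alpha,1)}$, $\omega_j^{(\alpha,\beta)}=\beta^{-(\alpha+1)}\omega_j^{(\alpha,1)}$ and the analogous scaling of the weight $\omega_{\alpha,\beta}(y)=y^\alpha e^{-\beta y}$, so as to re-express every weighted norm of $\xi(\beta y)$ or $\xi'(\beta y)$ in terms of $\Vert\xi\Vert_{\omega_{\alpha,1}}$, $\Vert\xi\Vert_{\omega_{\alpha+1,1}}$, $\Vert\partial_y\xi\Vert_{\omega_{\alpha,1}}$ with an explicit power of $\beta$ out front. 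The delicate points are (i) tracking how each extracted factor of $y$ (coming from $\partial_y e^{-y^2 t}$ or from differentiating the Duhamel integral) raises the effective Laguerre weight index from $\alpha$ to $\alpha+1$ and shifts the accompanying power of $\beta$, which is the origin of the $\beta^{-1/2}$ appearing alongside the $\omega_{\alpha+1,1}$ terms, and (ii) handling the initial-data terms, where $\xi$ or $\xi'$ multiplies $\Phi_0$ or $\partial_y\Phi_0$ pointwise, so the product must be organized so that $\xi$ contributes the weighted norm while the data contributes $\Vert\Phi_0\Vert_{A^1_{\alpha,\beta}}$. I would treat the source terms first, since they are cleaner (here $g$ is pulled out by its supremum and the weighted norm falls on the $\xi$-factor times a power of $y$), confirm the first line, and then carry out the same bookkeeping for the initial-data terms to obtain the second line, the extra $\Vert\xi\Vert_{\omega_{\alpha,1}}$ summand there being the contribution of the $e^{-y^2 t}\partial_y\Phi_0$ piece. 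Finally, collecting the common prefactor $M^{-1/2}\beta^{-\alpha/2-1}$ and summing these contributions by the triangle inequality yields the stated estimate.
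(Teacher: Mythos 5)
Your proposal takes essentially the same route as the paper's proof: it reduces the error to the $A^1_{\alpha,\beta}$ semi-norm of $\widetilde{\Phi}[\bvarepsilon]$ via the $\mu=0$, $r=1$ case of Lemma \ref{lemma_proj_error}, then bounds $\partial_y\widetilde{\Phi}[\bvarepsilon]$ using the product rule, the Duhamel representation \eqref{exact_solution} split into initial-data and source parts, the pointwise estimates $\me^{-y^2 t}\le 1$ and \eqref{t_estimate}, and the $\beta$-scaling identities for the weighted norms of $\xi$ and $\partial_y\xi$. The resulting bookkeeping, including the origin of the $t$ and $t^2$ factors and the powers of $\beta$, matches the paper's argument, so your plan is correct and not a genuinely different approach.
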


The proof of Theorem \ref{error_bound} is based on the well-known bound for the $L_{\omega_{\alpha, \beta}}^2$-projection \cite{GuoWangWang2006}. 
\begin{lemma}\label{lemma_proj_error}
For any $v$  with $|v|_{A_{\alpha, \beta}^r} < \infty$, an integer $r$, and $0 \le \mu \le r$, 
\begin{equation}
\Vert \mathcal{P}_{M, \alpha, \beta}v - v \Vert_{A_{\alpha, \beta}^{\mu}} \le c (\beta M)^{\frac{\mu- r}{2}} |v|_{A_{\alpha, \beta}^r}.
\end{equation}
\end{lemma}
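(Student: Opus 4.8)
The plan is to carry out the standard generalized-Laguerre argument of \cite{GuoWangWang2006} by reducing every quantity in the statement to the coefficients of $v$ in the orthogonal basis $\{\mathscr{L}_n^{(\alpha,\beta)}\}$. Writing $v = \sum_{n\ge 0}\hat v_n \mathscr{L}_n^{(\alpha,\beta)}$ with $\hat v_n = (\gamma_n^{(\alpha,\beta)})^{-1}(v,\mathscr{L}_n^{(\alpha,\beta)})_{\omega_{\alpha,\beta}}$, the projection $\mathcal{P}_{M,\alpha,\beta}$ is simply the truncation of this series at $n=M$, so the error is the tail $v - \mathcal{P}_{M,\alpha,\beta}v = \sum_{n>M}\hat v_n\mathscr{L}_n^{(\alpha,\beta)}$, and by the orthogonality relation $(\mathscr{L}_m^{(\alpha,\beta)},\mathscr{L}_l^{(\alpha,\beta)})_{\omega_{\alpha,\beta}}=\gamma_m^{(\alpha,\beta)}\delta_{l,m}$ one gets $\Vert v - \mathcal{P}_{M,\alpha,\beta}v\Vert_{\omega_{\alpha,\beta}}^2 = \sum_{n>M}\gamma_n^{(\alpha,\beta)}|\hat v_n|^2$. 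The whole estimate follows once the two semi-norms in the statement are expressed through the same coefficients $\hat v_n$ and compared term by term.

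The key tool is the differentiation recurrence for the scaled Laguerre functions. Since $\mathscr{L}_n^{(\alpha,\beta)}(y) = \mathscr{L}_n^{(\alpha)}(\beta y)$ and $\tfrac{\D}{\D y}\mathscr{L}_n^{(\alpha)} = -\mathscr{L}_{n-1}^{(\alpha+1)}$, iteration gives $\partial_y^k\mathscr{L}_n^{(\alpha,\beta)} = (-\beta)^k\mathscr{L}_{n-k}^{(\alpha+k,\beta)}$. Thus $\partial_y^k$ sends an $(\alpha,\beta)$-expansion to an $(\alpha+k,\beta)$-expansion with the same coefficients up to the factor $(-\beta)^k$ and an index shift; crucially it commutes with truncation, so $\partial_y^k(v - \mathcal{P}_{M,\alpha,\beta}v) = (-\beta)^k\sum_{n>M}\hat v_n\mathscr{L}_{n-k}^{(\alpha+k,\beta)}$. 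Invoking orthogonality in the shifted weight $\omega_{\alpha+k,\beta}$ yields $|v - \mathcal{P}_{M,\alpha,\beta}v|_{A^k_{\alpha,\beta}}^2 = \beta^{2k}\sum_{n>M}\gamma_{n-k}^{(\alpha+k,\beta)}|\hat v_n|^2$ for each $0\le k\le\mu$, and likewise $|v|_{A^r_{\alpha,\beta}}^2 = \beta^{2r}\sum_{n\ge r}\gamma_{n-r}^{(\alpha+r,\beta)}|\hat v_n|^2$.

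With both sides reduced to weighted tail sums, the estimate becomes a termwise comparison of normalization constants. Using $\gamma_m^{(\alpha,\beta)} = \Gamma(m+\alpha+1)/[\beta^{\alpha+1}\Gamma(m+1)]$ one computes
\[
\frac{\beta^{2k}\gamma_{n-k}^{(\alpha+k,\beta)}}{\beta^{2r}\gamma_{n-r}^{(\alpha+r,\beta)}} = \beta^{k-r}\,\frac{\Gamma(n-r+1)}{\Gamma(n-k+1)},
\]
where the $\Gamma(n+\alpha+1)$ factors cancel exactly. The remaining ratio is the reciprocal of a product of $r-k$ consecutive integers, so $\Gamma(n-r+1)/\Gamma(n-k+1)\le (n-r+1)^{-(r-k)}$, which for $n>M$ with $M$ bounded below by a fixed multiple of $r$ is at most $c\,M^{-(r-k)}$. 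Hence each term of the $A^k$-error tail is bounded by $c\,(\beta M)^{-(r-k)}$ times the corresponding term of $|v|_{A^r_{\alpha,\beta}}^2$, giving $|v - \mathcal{P}_{M,\alpha,\beta}v|_{A^k_{\alpha,\beta}}^2\le c\,(\beta M)^{-(r-k)}|v|_{A^r_{\alpha,\beta}}^2$. Summing over $0\le k\le\mu$, the slowest-decaying contribution is $k=\mu$, so $\Vert v - \mathcal{P}_{M,\alpha,\beta}v\Vert_{A^\mu_{\alpha,\beta}}^2 = \sum_{k=0}^\mu |v - \mathcal{P}_{M,\alpha,\beta}v|_{A^k_{\alpha,\beta}}^2\le c\,(\beta M)^{-(r-\mu)}|v|_{A^r_{\alpha,\beta}}^2$, which is the claimed bound after taking square roots.

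The main obstacle here is organizational rather than deep: one must confirm that the differentiation recurrence carries exactly the stated $\beta$-powers, since any slip in tracking $\beta$ would corrupt the factor $(\beta M)^{(\mu-r)/2}$. A clean way to guard against this is to prove the case $\beta=1$ first and recover general $\beta$ by the dilation $y\mapsto\beta y$, under which $\mathcal{P}_{M,\alpha,\beta}$, the weight $\omega_{\alpha,\beta}$, and the semi-norms $|\cdot|_{A^r_{\alpha,\beta}}$ all rescale by explicit powers of $\beta$. The only genuine inequality is the uniform Gamma-ratio bound for $n>M$, which needs $M\gtrsim r$; since the lemma is applied with small fixed $r$ and large $M$, this mild restriction is harmless and merely affects the absolute constant $c$.
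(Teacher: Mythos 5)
Your proof is correct and follows what is effectively the paper's own route: the paper does not prove this lemma at all but quotes it as the well-known bound from \cite{GuoWangWang2006}, whose proof is precisely your coefficient-comparison argument — expand in $\{\mathscr{L}_n^{(\alpha,\beta)}\}$, use the recurrence $\partial_y^k\mathscr{L}_n^{(\alpha,\beta)}=(-\beta)^k\mathscr{L}_{n-k}^{(\alpha+k,\beta)}$ so that differentiation commutes with truncation, and bound the ratio of normalization constants $\beta^{k-r}\Gamma(n-r+1)/\Gamma(n-k+1)$ termwise. The only implicit assumptions in your write-up — $M\gtrsim r$ for the Gamma-ratio bound and $\beta M\ge 1$ so that the $k=\mu$ term dominates the sum over $0\le k\le\mu$ — are harmless in the regime where the lemma is applied (small fixed $r$, large $M$) and only affect the constant $c$.
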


Here we first consider $\mu =0$ and $r = 1$. Despite the spectral accuracy,  it points out that $\Vert \mathcal{E}_{M, \alpha, \beta}(\bx, y, t) \Vert_{\omega_{\alpha, \beta}}$ may be amplified  due to the increments from the source term $g(\bx, t)$.
\begin{proof}[Proof of Theorem \ref{error_bound}] It starts with
\begin{equation*}
\partial_y \widetilde{\Phi}[\bvarepsilon](\bx, y, t) = \left(\partial_y \xi(\beta y)\right) \Phi[\bvarepsilon](\bx, y, t) + \xi(\beta y) \partial_y \Phi[\bvarepsilon](\bx, y, t).
\end{equation*}
By the triangular inequality, it has that
\begin{equation*}
\begin{split}
\Vert \partial_y \widetilde{\Phi}[\bvarepsilon](\bx, y, t) \Vert_{\omega_{\alpha, \beta}} &\le \Vert \partial_y \xi(\beta y) \Phi[\bvarepsilon](\bx, y, t) \Vert_{\omega_{\alpha, \beta}} + \Vert \xi(\beta y) \partial_y \Phi[\bvarepsilon](\bx, y, t) \Vert_{\omega_{\alpha, \beta}}.
\end{split}
\end{equation*}

Since $\Phi(\bx, y, t)$ satisfies Eq.~\eqref{exact_solution} and
\begin{equation*}
\partial_y \Phi(\bx, y, t) = -2y t \me^{-y^2 t} \Phi[\bvarepsilon](\bx, y, 0) + \me^{-y^2 t} \partial_y \Phi[\bvarepsilon](\bx, y, 0) - 2 y \int_0^t  \tau \me^{-y^2 \tau} g(\bx, t - \tau) \D \tau,
\end{equation*}
by the triangular inequality, it further yields that
\begin{equation*}
\begin{split}
\Vert \partial_y \xi(\beta y)  \Phi[\bvarepsilon](\bx, y, t) \Vert_{\omega_{\alpha, \beta}} \le &\Vert \partial_y \xi(\beta y)  \Vert _{\omega_{\alpha, \beta}}\Vert \me^{-y^2 t}\Phi[\bvarepsilon](\bx, y, 0) \Vert_{\omega_{\alpha, \beta}} \\
&+ \Vert \partial_y \xi(\beta y) \int_0^t \me^{-y^2 \tau} g(\bx, t-\tau) \D \tau \Vert_{\omega_{\alpha, \beta}} \\
\le &  \Vert \partial_y \xi(\beta y)  \Vert _{\omega_{\alpha, \beta}} (\Vert \Phi[\bvarepsilon](\bx, y, 0) \Vert_{\omega_{\alpha, \beta}} + t  \max_{\bx,t} |g(\bx, t)|),
\end{split}
\end{equation*}
and
\begin{equation*}
\begin{split}
\Vert \xi(\beta y)  \partial_y \Phi[\bvarepsilon](\bx, y, t) \Vert_{\omega_{\alpha, \beta}} \le & 2 t \Vert \xi(\beta y) \Vert_{\omega_{\alpha+1, \beta}} \Vert  \Phi[\bvarepsilon](\bx, y, 0) \Vert_{\omega_{\alpha, \beta}} \\
&+  \Vert \xi(\beta y) \Vert_{\omega_{\alpha, \beta}}   \Vert \partial_y \Phi[\bvarepsilon](\bx, y, 0) \Vert_{\omega_{\alpha, \beta}} \\
&+ 2 t^2 \Vert \xi(\beta y) \Vert_{\omega_{\alpha+1, \beta}}  \max_{\bx,t}|g(\bx, t)|,
\end{split}
\end{equation*}
which utilizes the fact that
\begin{equation}\label{t_estimate}
\Big |  \int_0^t  \tau^k \me^{-y^2 \tau } g(\bx, t - \tau) \D \tau \Big | \le t^k  \left( \int_0^t  \me^{-y^2 \tau} \D \tau\right)  \max_{\bx,t} |g(\bx, t) \le t^{k+1}  \max_{\bx,t}|g(\bx, t)|.
\end{equation}
Finally, since
\begin{equation} 
 \Vert \partial_y^k \xi(\beta y) \Vert_{\omega_{\alpha,\beta}} =   \beta^{-\frac{\alpha+1}{2} +k} \Vert \partial_y \xi(y) \Vert_{\omega_{\alpha,1}} ,
\end{equation}
it completes the proof.
\end{proof}

The result can be generalized to $r > 1$ as $\Phi[\bvarepsilon](\bx, y, t)$ is infinitely smooth in $y$-variable.
\begin{theorem}\label{error_bound_2} Suppose $\Vert \me^\frac{y}{2} \Phi[\varepsilon](\bx, y, 0) \Vert_{A^r_{\alpha, \beta}} < \infty$ and $\max_{\bx,t} |g(\bx, t)| < \infty$, then for any $\alpha > - 1$ and $\beta$, $r \ge 1$, it has that 
\begin{equation*}
\begin{split}
&\Vert \mathcal{E}_{M, \alpha, \beta}(\bx, y, t) \Vert_{\omega_{\alpha, \beta}}  \lesssim  c_1 M^{-\frac{r}{2}} \beta^{-\frac{\alpha+1}{2}} (1+\beta^{-1})^{r} \Vert \me^{-\frac{ y}{2}} \partial_y^k \xi(y) \Vert_{\omega_{\alpha+r, 1}}  \\
&\quad \quad + c_2 M^{-\frac{r}{2}} \beta^{-\frac{\alpha+1}{2}} (1 + \beta^{-1})^{2r} (1+t)^r  \left( \sum_{s_1 = 0}^{r} \Vert \partial_y^k \xi(y) \Vert_{\omega_{\alpha+r+2s_1,1}}\right) \max_{\bx,t} |g(\bx, t)|,
\end{split}
\end{equation*}
where $c_1$ and $c_2$ are independent of $\beta$ and $M$.
\end{theorem}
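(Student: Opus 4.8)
The plan is to mimic the proof of Theorem~\ref{error_bound} but at the level of the $r$-th derivative, using Lemma~\ref{lemma_proj_error} with $\mu=0$ and the given $r\ge 1$. First I would invoke the lemma to obtain
\[
\Vert \mathcal{E}_{M,\alpha,\beta}(\bx,y,t)\Vert_{\omega_{\alpha,\beta}} \le c(\beta M)^{-\frac{r}{2}}\,|\widetilde{\Phi}[\bvarepsilon](\bx,y,t)|_{A^r_{\alpha,\beta}} = c(\beta M)^{-\frac{r}{2}}\,\Vert \partial_y^r\widetilde{\Phi}[\bvarepsilon](\bx,y,t)\Vert_{\omega_{\alpha+r,\beta}},
\]
which already accounts for the factor $M^{-r/2}$ and the $\beta^{-r/2}$ buried in $(\beta M)^{-r/2}$. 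The entire task therefore reduces to bounding the weighted $r$-th derivative seminorm $\Vert\partial_y^r\widetilde{\Phi}[\bvarepsilon]\Vert_{\omega_{\alpha+r,\beta}}$ uniformly in $t$, $\beta$ and $M$; this is the higher-order analogue of the single-derivative estimate already performed for Theorem~\ref{error_bound}.

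Next I would apply the Leibniz rule to $\widetilde{\Phi}[\bvarepsilon]=\xi(\beta y)\Phi[\bvarepsilon]$,
\[
\partial_y^r\widetilde{\Phi}[\bvarepsilon]=\sum_{k=0}^{r}\binom{r}{k}\beta^{k}\big(\partial_y^k\xi\big)(\beta y)\,\partial_y^{r-k}\Phi[\bvarepsilon],
\]
and split the norm by the triangle inequality into $r+1$ contributions. Each requires $\partial_y^{r-k}\Phi[\bvarepsilon]$, obtained by differentiating the exact solution~\eqref{exact_solution} in $y$. The analytic engine is the identity $\partial_y^s\me^{-y^2t}=\me^{-y^2t}P_s(y,t)$, where $P_s$ is a Hermite-type polynomial of degree $s$ in $y$ whose coefficients are monomials in $t$: the extra powers of $y$ it produces are exactly what raises the weight index from $\alpha+r$ to $\alpha+r+2s_1$ in the stated bound, while the extra powers of $t$ accumulate into $(1+t)^r$.

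I would then treat the two physical contributions of~\eqref{exact_solution} separately. For the source part, differentiation under the integral sign produces terms $\int_0^t\tau^{m}\me^{-y^2\tau}g(\bx,t-\tau)\D\tau$, bounded by the straightforward generalization of~\eqref{t_estimate} by $(1+t)^{m+1}\max_{\bx,t}|g|$; bounding the residual Gaussian by unity and absorbing the surviving $y$-polynomials (after pairing with $(\partial_y^k\xi)(\beta y)$ and integrating against $\omega_{\alpha+r,\beta}$) yields the sum $\sum_{s_1=0}^{r}\Vert\partial_y^k\xi\Vert_{\omega_{\alpha+r+2s_1,1}}$ with the help of the scaling identity $\Vert\partial_y^k\xi(\beta y)\Vert_{\omega_{\alpha,\beta}}=\beta^{-\frac{\alpha+1}{2}+k}\Vert\partial_y^k\xi\Vert_{\omega_{\alpha,1}}$ from Theorem~\ref{error_bound}. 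For the initial-data part $\me^{-y^2t}\Phi[\bvarepsilon](\bx,y,0)$, the Gaussian is bounded by one and the exponential growth of $\xi=\me^{y}\chi$ is tamed by the hypothesis $\Vert\me^{y/2}\Phi[\bvarepsilon](\bx,y,0)\Vert_{A^r_{\alpha,\beta}}<\infty$: writing the product as $\big(\me^{-\frac{y}{2}}\partial_y^k\xi\big)\big(\me^{\frac{y}{2}}\partial_y^{r-k}\Phi[\bvarepsilon](\cdot,0)\big)$ and applying Cauchy--Schwarz leaves precisely the factor $\Vert\me^{-\frac{y}{2}}\partial_y^k\xi\Vert_{\omega_{\alpha+r,1}}$ appearing in the first term (finite because $\chi$ is compactly supported). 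Finally, collecting the $\beta$-powers: the combination $(\beta M)^{-r/2}\binom{r}{k}\beta^{k}\beta^{-\frac{\alpha+r+2s_1+1}{2}}$ resums over $k$ through $\sum_k\binom{r}{k}\beta^{k-r}=(1+\beta^{-1})^{r}$, which is the origin of the factors $(1+\beta^{-1})^{r}$ and, for the source term carrying more polynomial factors, $(1+\beta^{-1})^{2r}$.

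The main obstacle I anticipate is the combinatorial bookkeeping of this Leibniz-plus-Gaussian expansion rather than any new idea. One must verify that every polynomial prefactor $P_s(y,t)$ can be absorbed into a \emph{finite} family of weighted norms $\Vert\partial_y^k\xi\Vert_{\omega_{\alpha+r+2s_1,1}}$ with constants independent of $\beta$ and $M$; that the accumulated $t$-powers close into $(1+t)^r$ and not a larger power; and, most delicately, that the three competing effects---the Gaussian decay $\me^{-y^2t}$, the algebraic growth of $P_s$, and the exponential growth of $\xi$---balance so that the initial-data term is controlled exactly by the $\me^{y/2}$-weighted hypothesis. Since each of these mirrors a step already present in the $r=1$ argument, the work is organizational: keeping the indices $k$, $s_1$ and the powers of $\beta$ mutually consistent throughout.
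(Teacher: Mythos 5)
Your proposal follows essentially the same route as the paper's own proof: Lemma~\ref{lemma_proj_error} with $\mu=0$ to reduce everything to $\Vert\partial_y^r\widetilde{\Phi}[\bvarepsilon]\Vert_{\omega_{\alpha+r,\beta}}$, a Leibniz expansion of $\xi(\beta y)\Phi[\bvarepsilon]$ split via the exact solution \eqref{exact_solution} into initial-data and source contributions, the generalization of \eqref{t_estimate} together with the scaling identity $\Vert\partial_y^k\xi(\beta y)\Vert_{\omega_{\alpha+r+2s_1,\beta}}=\beta^{-\frac{\alpha+r+1}{2}-s_1+k}\Vert\partial_y^k\xi\Vert_{\omega_{\alpha+r+2s_1,1}}$, and the binomial resummation producing the $(1+\beta^{-1})^r$ factors. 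The structural details you flag as requiring verification (the weights $\alpha+r+2s_1$, the accumulated $t$-powers, the $\me^{y/2}$-weighted pairing for the initial data) are exactly the steps carried out in the paper's argument.
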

\begin{proof}
It starts with
\begin{equation*}
\begin{split}
\Vert \partial_y^r \widetilde{\Phi}[\bvarepsilon](\bx, y, t) \Vert_{\omega_{\alpha+r, \beta}} \le &\underbracket{\sum_{k=0}^r C_r^k \Vert \me^{-\frac{\beta y}{2}}\partial_y^k \xi(\beta y) \Vert_{\omega_{\alpha+r, \beta}}  \Vert \me^{\frac{\beta y}{2}}\partial_y^{r-k} (\me^{-y^2 t}\Phi[\bvarepsilon](\bx, y, 0)) \Vert_{\omega_{\alpha+r, \beta}}}_{\textbf{I}} \\
&+\underbracket{\sum_{k=0}^r C_r^k \Vert \partial_y^k \xi(\beta y) \cdot \partial_y^{r-k} (\int_0^t \me^{ -y^2 \tau} g(\bx, t-\tau) \D \tau)) \Vert_{\omega_{\alpha+r, \beta}}}_{\textbf{II}}.
\end{split}
\end{equation*}

For the first term, since
\begin{equation*}
\Vert \me^{{\beta y}/{2}} \cdot \partial_y^{r-k} (\me^{-y^2 t}\Phi[\bvarepsilon](\bx, y, 0)) \Vert_{\omega_{\alpha+r, \beta}} =  \Vert \me^{{y}/{2}}\cdot \partial_y^{r-k} (\me^{-y^2 t}\Phi[\bvarepsilon](\bx, y, 0)) \Vert_{\omega_{\alpha+r, 1}} 
\end{equation*}
we have that
\begin{equation*}
\textbf{I} \lesssim \sum_{k=0}^r C_r^k \beta^{-\frac{\alpha+r+1}{2} +k} \Vert \me^{-\frac{ y}{2}}\partial_y^k \xi(y) \Vert_{\omega_{\alpha+r, 1}}
= \beta^{-\frac{\alpha+r+1}{2}} (1 + \beta)^{r} \Vert \me^{-\frac{ y}{2}} \partial_y^k \xi(y) \Vert_{\omega_{\alpha+r, 1}}
\end{equation*}

For the second term, for arbitrary $0 \le s_1, s_2 \le r$, using Eq.~\eqref{t_estimate} yields
\begin{equation*}
\Vert \partial_y^k \xi(\beta y) \int_0^t  y^{s_1} t^{s_2} \me^{-y^2 \tau} g(\bx, t-\tau) \D \tau \Vert_{\omega_{\alpha+r, \beta}} \le t^{s_2+1}  \max_{\bx,t} |g(\bx, t)| \cdot \Vert \partial_y^k \xi(\beta y) \Vert_{\omega_{\alpha+r+2s_1, \beta}}
\end{equation*}
and $\Vert \partial_y^k \xi(\beta y) \Vert_{\omega_{\alpha+r+2s_1, \beta}} = \beta^{-\frac{\alpha+r+1}{2}  - s_1 + k}  \Vert \partial_y^k \xi(y) \Vert_{\omega_{\alpha,1}}$.

Summing over all possible terms yields the following estimate, 
\begin{equation*}
\begin{split}
\textbf{II} &\lesssim \sum_{s_1 = 0}^r \sum_{s_2 = 0}^r \sum_{k=0}^r t^{s_2+1} C_r^k  \beta^{-\frac{\alpha+r+1}{2}  - s_1 + k}  \max_{\bx,t} |g(\bx, t)| \cdot  \Vert \partial_y^k \xi(y) \Vert_{\omega_{\alpha+r+2s_1,1}} \\
&\lesssim \beta^{-\frac{\alpha+r+1}{2} } (1 + t)^{r+1} ( 1 + \beta)^r (1 + \frac{1}{\beta})^{r}  \max_{\bx,t} |g(\bx, t)| \cdot  \sum_{s_1 = 0}^{r} \Vert \partial_y^k \xi(y) \Vert_{\omega_{\alpha+r+2s_1,1}}. 
\end{split}
\end{equation*}
Combining with Lemma \ref{lemma_proj_error}, it completes the proof.
\end{proof}

According to Theorem 6.31.2 in \cite{bk:Szego1974}, the largest zero of $\mathscr{L}_M^{(\alpha)}$ satisfies $y_j^{(\alpha)} \lesssim 4M$. Combining Theorem \ref{error_bound_2} and the estimate \eqref{2nd_decay} of truncation error, we arrive at the Proposition \ref{prop_LG}. 
\begin{proposition}\label{prop_LG} Under the conditions in Theorem \ref{error_bound_2}, for $-1 < \alpha < 1$, it has that
\begin{equation*}
\begin{split}
\Big | \int_0^{+\infty}\omega_{\alpha, \beta}(y) \mathcal{E}_{M, \alpha, \beta}(\bx, y, t)  \D y \Big | \le & \underbracket{C_1 M^{\alpha-1} \beta^{1-\alpha}}_{\textup{truncation error}} + \underbracket{C_2 M^{-\frac{r}{2}} \beta^{-\frac{\alpha+1}{2}} (1+\beta^{-1})^{r}}_{\textup{projection error}} \\
& \hspace{-2cm}+ \underbracket{C_3 M^{-\frac{r}{2}} \beta^{-\frac{\alpha+1}{2}} (1 + \beta^{-1})^{2r} (1+t)^r  \max_{\bx,t} |g(\bx, t)|}_{\textup{projection error from source term}},
\end{split}
\end{equation*}
where $C_1$ is independent of $\beta$, $C_1$ and $C_2$ and $C_3$ are also independent of $M$.
\end{proposition}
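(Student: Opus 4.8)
The plan is to bound the total error of evaluating the weakly singular integral by splitting it into the two pieces already isolated in the decomposition \eqref{decomposition}: the quadrature error committed on the pre-asymptotic part $\int_0^{+\infty}\omega_{\alpha,\beta}\widetilde{\Phi}[\bvarepsilon]\D y$, and the truncation error from discarding the post-asymptotic tail $\int_0^{+\infty}y^{4\gamma-1}\Phi[\bvarepsilon](1-\chi(\beta y))\D y$. The first piece reduces \emph{exactly} to the projection integral $\int_0^{+\infty}\omega_{\alpha,\beta}\mathcal{E}_{M,\alpha,\beta}\D y$, because the generalized Laguerre--Gauss rule with $M+1$ nodes integrates the degree-$M$ projection $\mathcal{P}_{M,\alpha,\beta}\widetilde{\Phi}[\bvarepsilon]$ without error; this is precisely the exactness computation displayed in the paragraph that introduces $\mathcal{E}_{M,\alpha,\beta}$, valid once $y_M^{(\alpha)}<r_c$. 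Hence the projection-type contributions and the truncation contribution can be estimated separately and then added.

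For the projection integral I would apply the Cauchy--Schwarz bound stated just before Theorem \ref{error_bound},
\[
\Big|\int_0^{+\infty}\omega_{\alpha,\beta}(y)\mathcal{E}_{M,\alpha,\beta}\D y\Big|\le\Big(\int_0^{+\infty}\omega_{\alpha,\beta}(y)\D y\Big)^{\frac12}\Vert\mathcal{E}_{M,\alpha,\beta}\Vert_{\omega_{\alpha,\beta}},
\]
and evaluate the prefactor via $\int_0^{+\infty}\omega_{\alpha,\beta}\D y=\Gamma(\alpha+1)\beta^{-(\alpha+1)}$, which is finite exactly because $\alpha>-1$. Feeding in the bound of Theorem \ref{error_bound_2} for $\Vert\mathcal{E}_{M,\alpha,\beta}\Vert_{\omega_{\alpha,\beta}}$ then reproduces the two projection terms $C_2 M^{-r/2}\beta^{-(\alpha+1)/2}(1+\beta^{-1})^{r}$ and $C_3 M^{-r/2}\beta^{-(\alpha+1)/2}(1+\beta^{-1})^{2r}(1+t)^r\max_{\bx,t}|g|$, with the $\bvarepsilon$-independent norm factors $\Vert\me^{-\frac{y}{2}}\partial_y^k\xi\Vert_{\omega_{\alpha+r,1}}$, $\sum_{s_1}\Vert\partial_y^k\xi\Vert_{\omega_{\alpha+r+2s_1,1}}$ and the constant $\Gamma(\alpha+1)^{1/2}$ absorbed into $C_2$ and $C_3$; this exposes the essential $M^{-r/2}$ decay and the algebraic growth in $t$, while leaving a benign $\beta$-dependence in the constants.

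For the truncation piece I would start from \eqref{2nd_decay}, which gives $\int_0^{+\infty}y^{4\gamma-1}\Phi[\bvarepsilon](1-\chi(\beta y))\D y\lesssim(r_c/\beta)^{4\gamma-2}$, and rewrite the exponent using $\alpha=4\gamma-1$, so that $4\gamma-2=\alpha-1<0$ precisely when $\alpha<1$ --- this is where the upper restriction enters and guarantees genuine decay. The remaining task is to calibrate the cut-off radius $r_c$ against $M$: since the support of $\chi(\beta y)$ must contain all scaled nodes and the largest zero obeys $y_M^{(\alpha,1)}\lesssim 4M$ by Theorem 6.31.2 of \cite{bk:Szego1974} (equivalently $\beta y_M^{(\alpha,\beta)}\lesssim 4M$ under the rescaling $y_j^{(\alpha,\beta)}=\beta^{-1}y_j^{(\alpha,1)}$), one takes $r_c\asymp 4M$, independent of $\beta$. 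Substituting gives $(r_c/\beta)^{\alpha-1}\asymp M^{\alpha-1}\beta^{1-\alpha}$, which is the truncation term $C_1 M^{\alpha-1}\beta^{1-\alpha}$ with $C_1$ free of both $M$ and $\beta$. Adding the three bounds completes the argument.

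The main obstacle is not any single inequality but the bookkeeping that ties the two error sources to a common scaling: the truncation estimate \eqref{2nd_decay} is phrased in $r_c$, whereas Theorem \ref{error_bound_2} is phrased in $M$, and it is the calibration $r_c\asymp 4M$ through the Szeg\H{o} zero-location bound that renders them comparable and reveals the competing $\beta$-dependence --- the truncation error growing like $\beta^{1-\alpha}$ against the projection errors decaying like $\beta^{-(\alpha+1)/2}$ (up to the $(1+\beta^{-1})$ factors), which is exactly the trade-off that motivates the scaling technique. I would pay particular attention to keeping the node/cut-off inequality pointed in the correct direction, so that all quadrature nodes genuinely lie in the pre-asymptotic range where the exactness identity holds, and to the two-sided constraint $-1<\alpha<1$: the lower bound securing integrability of $\omega_{\alpha,\beta}$ and the convergence of the weakly singular integral, the upper bound securing actual decay of the discarded tail.
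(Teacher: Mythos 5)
Your proposal is correct and follows essentially the same route as the paper, whose own proof is a one-sentence combination of the Szeg\H{o} bound $y_M^{(\alpha)}\lesssim 4M$, the projection estimate of Theorem \ref{error_bound_2} (fed through the Cauchy--Schwarz inequality stated before Theorem \ref{error_bound}), and the tail estimate \eqref{2nd_decay}. You have simply made explicit the same ingredients the paper compresses: the quadrature-exactness identification of the pre-asymptotic error with the projection integral, the calibration $r_c\asymp 4M$, and the absorption of the benign $\beta$-factors into $C_2$, $C_3$.
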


Proposition \ref{prop_LG} states two facts. First, the accuracy of the Laguerre-Gauss quadrature under the prescribed collocation points may be diminished due to the terms $(1+t)^r$. Second, introducing a large scaling factor $\beta > 1$ can help reduce the dynamical increments of numerical errors in the pre-asymptotic range (see the term $\beta^{-\frac{\alpha+1}{2}}$). Although it may also augment the truncation errors as well (see the term $\beta^{1-\alpha})$, fortunately the pre-asymptotic range is much more important when $\gamma \ll 1$, so that choosing a large scaling factor $\beta$ can dramatically enhance the numerical accuracy.  

Here we provide an illustrative example to demonstrate the dynamical increments of projection error and the influences under different scaling factors.  Considering
\begin{equation}\label{dynamics_example}
I(t) = \int_0^{+\infty} y^{4\gamma-1} \phi(y, t) \D y, \quad \frac{\partial }{\partial t} \phi(y, t) =  1 - y^2 \phi(y, t), \quad \phi(y, 0) = 1,
\end{equation}
the exact solution of dynamics is $\phi(y, t) = \me^{-y^2 t}  + y^{-2} (1 - \me^{-y^2 t})$. Using Eq.~\eqref{Laguerre_Gauss_quadarture} to evaluate the integral $I(t)$,  with $M+1 = 500$, $\beta =1$ adopted as the reference, we can see in Figure \ref{Laguerre_Gauss_dynamics} that the accuracy of the Laguerre-Gauss quadrature deteriorates in time evolution, regardless of the number of collocation points.  After choosing a scaling factor $\beta > 1$, one can observe a slight reduction of accuracy at the early stage, but the amplification of numerical errors can be considerably suppressed in time evolution, while further increasing $\beta$ may lead to larger truncation error. But overall, the dynamical increments of numerical errors can be alleviated. By contrast, $\beta < 1$ will lead to a more severe increments of errors.

\begin{figure}[!h]
\centering
\subfigure[$M+1 = 8$, $\gamma=0.1$.]{\includegraphics[width=0.49\textwidth,height=0.27\textwidth]{./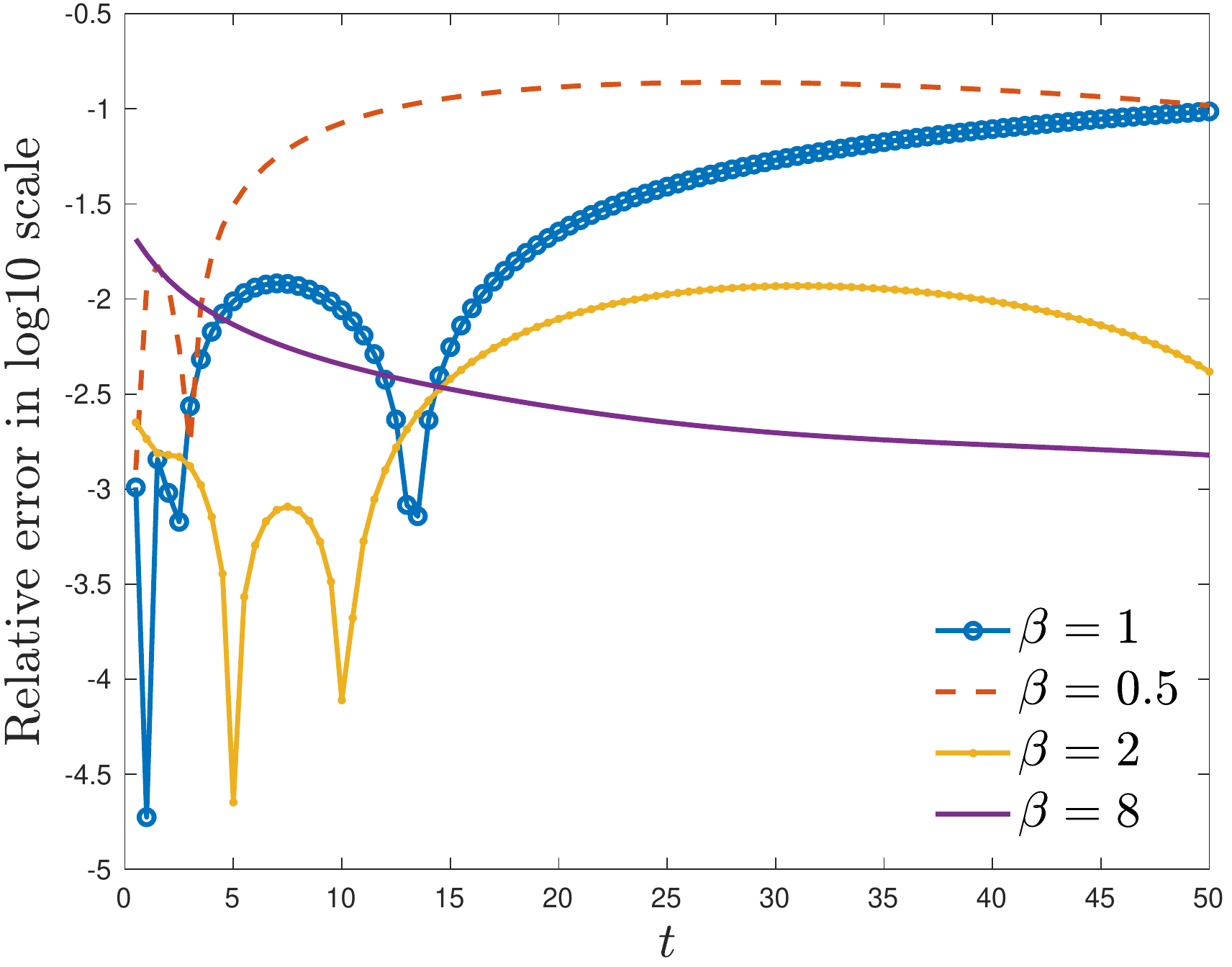}}
\subfigure[$M+1 = 32$, $\gamma=0.1$.]{\includegraphics[width=0.49\textwidth,height=0.27\textwidth]{./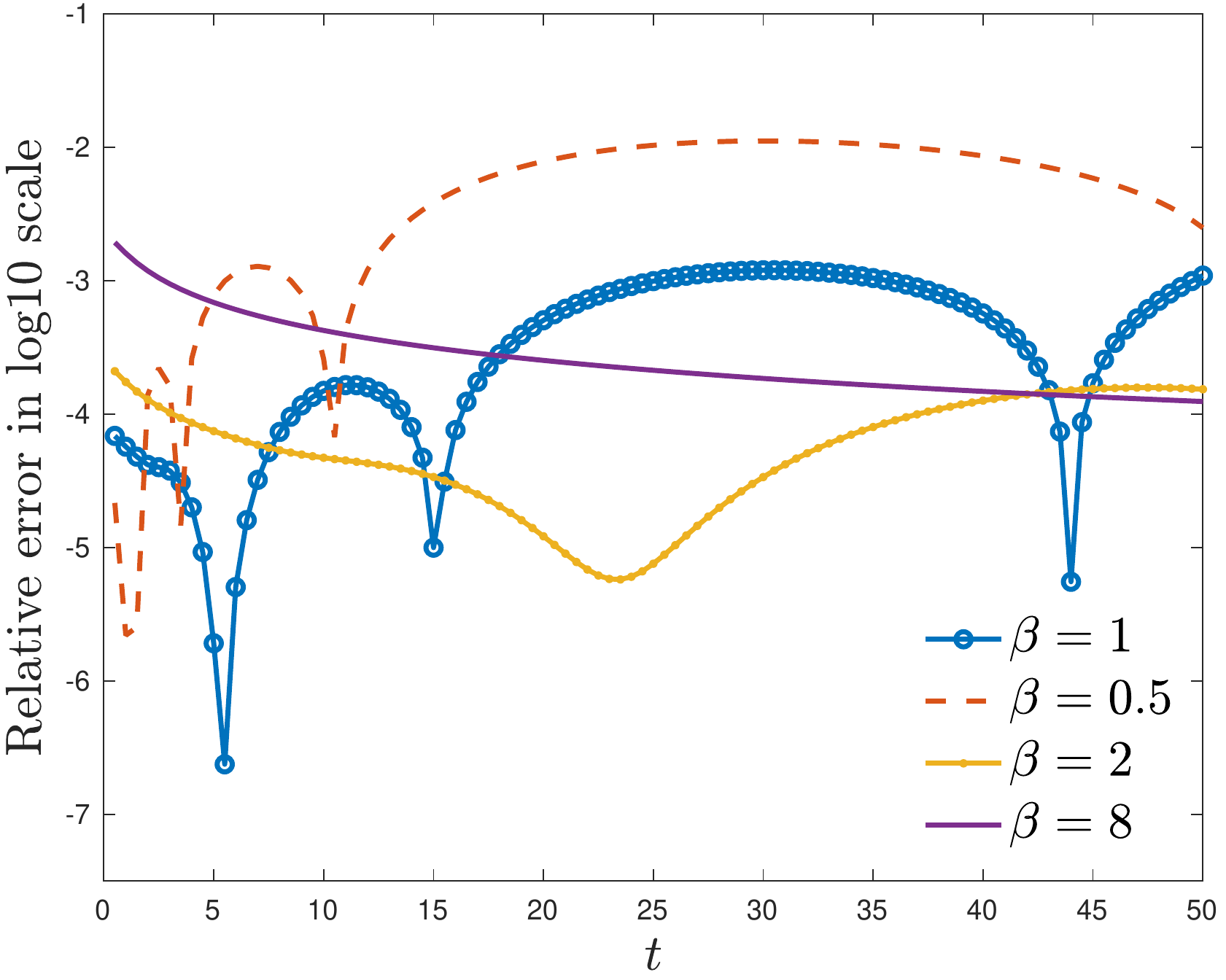}}
\\
\subfigure[$M+1 = 8$, $\gamma= 0.001$.]{\includegraphics[width=0.49\textwidth,height=0.27\textwidth]{./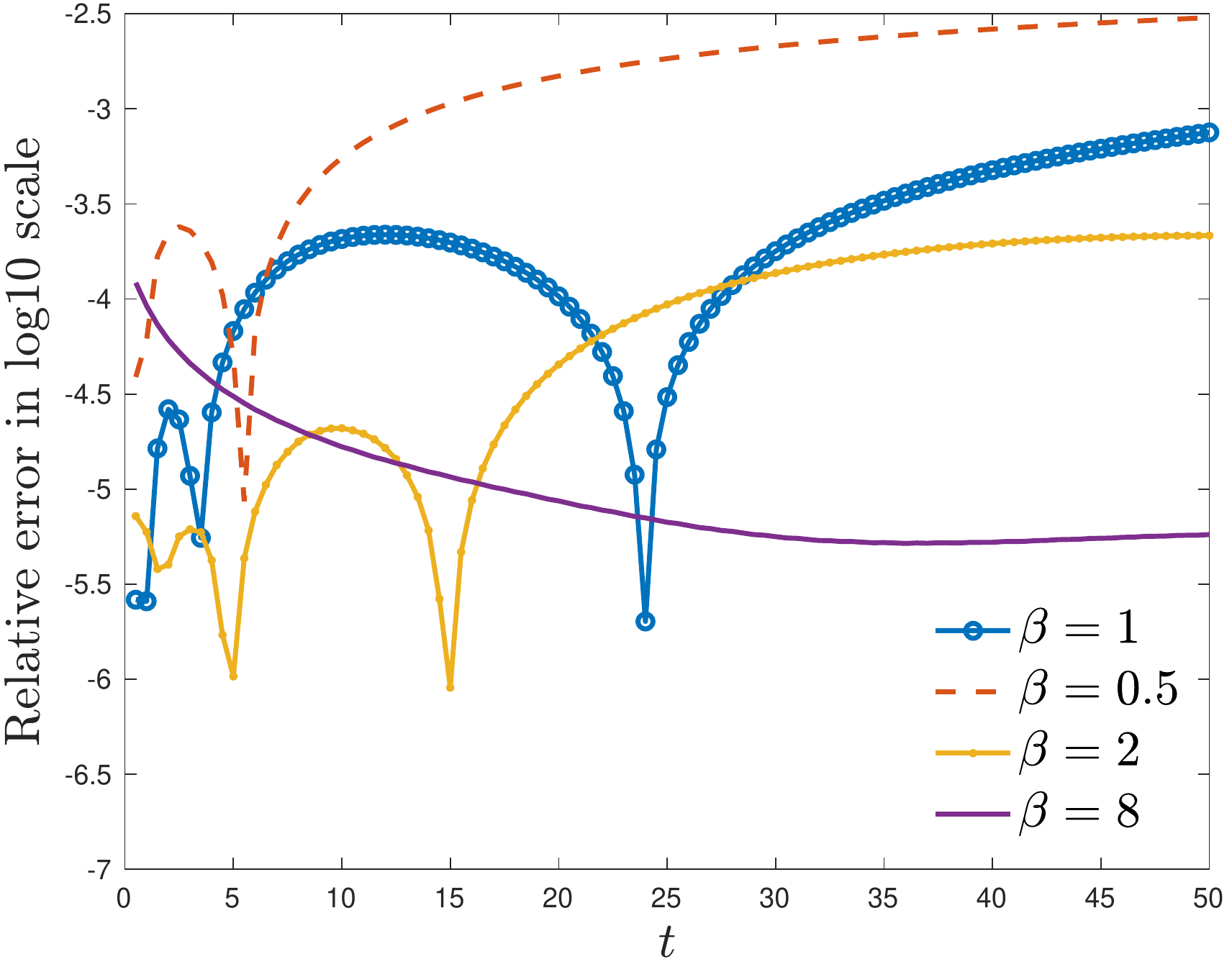}}
\subfigure[$M+1 = 32$, $\gamma= 0.001$.]{\includegraphics[width=0.49\textwidth,height=0.27\textwidth]{./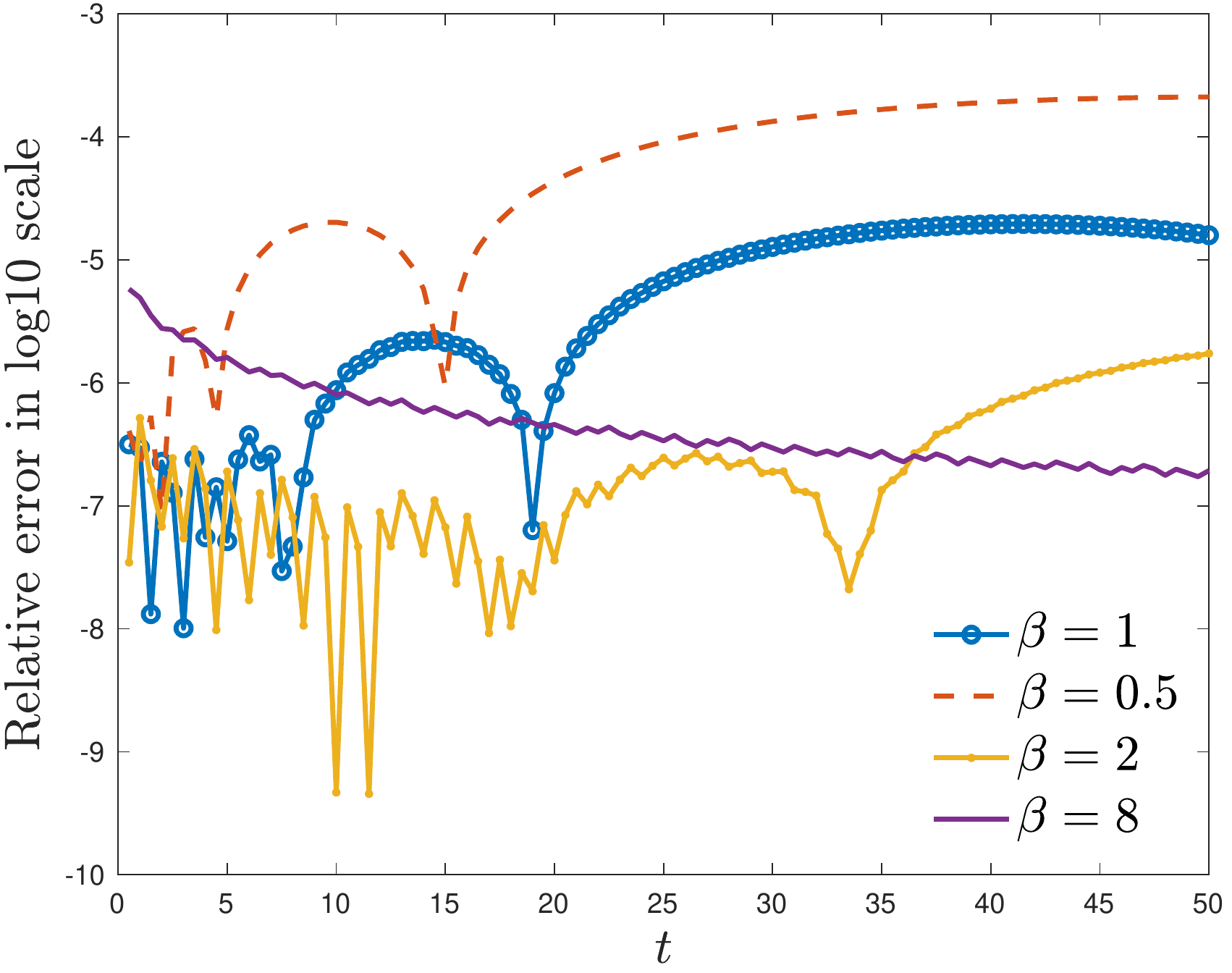}}
\\
\caption{Numerical accuracy of the Laguerre-Gauss quadrature for the integral $I(t)$ in \eqref{dynamics_example}. Numerical  errors will be amplified in time evolution, regardless of the choice of $M$. Choosing a scaling factor $\beta > 1$ will suppress the increments, while $\beta < 1$ will lead to a more severe amplification of numerical errors.}
\label{Laguerre_Gauss_dynamics}
\end{figure}



\subsection{A splitting scheme}

Finally, we illustrate the setting of our SMOS scheme. Although the operator $-y^2 \mathcal{I}_d$ breaks the Hamiltonian structure,  the constant-Q wave equation can still be evolved efficiently by an appropriate operator splitting.

Eq.~\eqref{attenuated_wave_eqn_set} are equivalent to 
\begin{equation}
\frac{\partial }{\partial t} \begin{pmatrix} \bv \\ \Phi[\bvarepsilon] \end{pmatrix} = \mathcal{L}\begin{pmatrix} \bv \\ \Phi[\bvarepsilon] \end{pmatrix} + \mathcal{U}\begin{pmatrix} \bv \\ \Phi[\bvarepsilon] \end{pmatrix},
\end{equation}
where the operators $\mathcal{L}$ and $\mathcal{U}$ are given by
\begin{equation}\label{problem_1}
\mathcal{L}  \begin{pmatrix} \bv \\ \Phi[\bvarepsilon] \end{pmatrix} = \begin{pmatrix} 0 & 0 \\ \mathcal{L}_1 & -y^2\mathcal{I}_d \end{pmatrix}  \begin{pmatrix} \bv \\ \Phi[\bvarepsilon] \end{pmatrix} 
\end{equation}
and
\begin{equation}\label{problem_2}
\mathcal{U}  \begin{pmatrix} \bv \\ \Phi[\bvarepsilon] \end{pmatrix} =  \begin{pmatrix} 0 & \mathcal{L}_3 \circ \mathcal{L}_2 \\ 0 & 0 \end{pmatrix} \begin{pmatrix} \bv \\ \Phi[\bvarepsilon] \end{pmatrix}  + \begin{pmatrix} \bm{f} \\ 0 \end{pmatrix}.
\end{equation}

Suppose the time interval $[0, T]$ is divided into $N$ steps, with spacing $\Delta t = T/N$, $t_n = n \Delta t$. Now we can use the Strang splitting 
\begin{equation}
\exp(t(\mathcal{L} + \mathcal{U}) = \left[\exp(\frac{\Delta t}{2} \mathcal{U})  \circ \exp(\Delta t \mathcal{L}) \circ \exp(\frac{\Delta t }{2}\mathcal{U})\right]^n + \mathcal{O}((\Delta t)^3)
\end{equation}

The splitting procedure is rather appealing as the exact solution of each subproblem can be readily obtained.  For the first subproblem \eqref{problem_1}, the exact solution reads that 
\begin{equation}\label{sub1_solution}
\left\{
\begin{split}
&\bv(\bx, t)  = \bv(\bx, t_{n}), \\
&\Phi[\bvarepsilon](\bx, y, t) = \me^{-y^2(t -t_n)} \Phi[\bvarepsilon](\bx, y, t_n) + \frac{1}{y^2} ( 1 -\me^{-y^2(t - t_n)} ) \mathcal{L}_1 \bv(\bx, t_n).
\end{split}
\right.
\end{equation}
This avoids the numerical stiffness  caused by sufficiently large $y^2$ and additional numerical errors in solving the auxiliary relaxed dynamics \eqref{dynamics_response}.

For the second subproblem \eqref{problem_2}, the exact solution reads that
\begin{equation}\label{sub2_solution}
\left\{
\begin{split}
&\bv(\bx, t)  = \bv(\bx, t_{n}) + (t- t_n) \mathcal{L}_3 \circ \mathcal{L}_2 \Phi[\bvarepsilon](\bx, y, t_n)  + \int_{t_n}^t \bm{f}(\bx, s) \D s,\\
&\Phi[\bvarepsilon](\bx, y, t) = \Phi[\bvarepsilon](\bx, y, t_n). 
\end{split}
\right.
\end{equation}

The SMOS scheme for unidimensional constant-Q wave equation is given in Algorithm \ref{leap_frog}, and its generalization to higher-dimensional case is straightforward. Here the strain tensor is added for physical meaning and a more concise description of the algorithm. Compared with the Strang splitting scheme for elastic wave equation, the only difference lies in the second step and there is no additional cost in evaluating the gradients of velocity vectors and stress tensors.

\begin{algorithm}[!h] 
\caption{Short-memory operator splitting (SMOS) scheme\label{leap_frog}}

\vspace{2mm}
\textbf{Input:} The order $M$ of Laguerre-Gauss quadrature, the scaling factor $\beta$

\begin{itemize}

\item[1.] Half-step update of velocity (vector):
\begin{equation*}
\bv(\bx, t_{n+\frac{1}{2}}) = \bv(\bx, t_{n}) + \frac{\Delta t}{2} \mathcal{L}_3  \sigma(\bx, t_n) +  \int_{t_n}^{t_n +\frac{\Delta t}{2}} \bm{f}(\bx, s) \D s.  
\end{equation*}

\item[2.] Full-step update of stress (tensor), with $\tilde{y}_j$ short for $y_j^{(4\gamma-1, \beta)}$:
\begin{equation*}
\left\{
\begin{split}
&\bm{\sigma}(\bx, t_{n+1}) = \rho(\bx)  \frac{2C\sin(2\pi \gamma)}{\pi}  \sum_{j=0}^{M} \omega_j^{(4\gamma-1, \beta)} \me^{\beta \tilde{y}_j}  \Phi[\bvarepsilon](\bx, \tilde{y}_j, t), \\
&\Phi[\bvarepsilon](\bx, \tilde{y}_j , t_{n+1}) = \me^{-\tilde{y}_j^2 \Delta t} \Phi[\bvarepsilon](\bx, \tilde{y}_j , t_{n}) + \frac{1}{\tilde{y}_j^2}(1 - \me^{-\tilde{y}_j^2 \Delta t}) \mathcal{L}_1 \bv(\bx, t_{n+\frac{1}{2}}).
\end{split}
\right. 
\end{equation*}

\item[3.] Half-step update of velocity (vector):
\begin{equation*}
\bv(\bx, t_{n+1}) = \bv(\bx, t_{n+\frac{1}{2}}) + \frac{\Delta t}{2} \mathcal{L}_3  \sigma(\bx, t_{n+1}) +  \int_{t_{n+\frac{1}{2}}}^{t_{n+\frac{1}{2}} +\frac{\Delta t}{2}}\bm{f}(\bx, s) \D s.  
\end{equation*}

\end{itemize}
\end{algorithm}

The gradient operators $\mathcal{L}_1$ and $\mathcal{L}_3$ can be solved by the standard staggered-grid pseudo-spectral method \cite{Carcione2010,bk:Fornberg1998}, and the integration of source term can be either tackled by the Gauss quadrature, or simply by the mid-point quadrature
\begin{equation}
\int_{t_{n+\frac{1}{2}}}^{t_{n+\frac{1}{2}} +\frac{\Delta t}{2}} \bm{f}(\bx, s) \D s \approx \frac{\Delta t}{2} \left( \bm{f}(\bx, t_{n+\frac{1}{2}}) +  \bm{f}(\bx, t_{n+\frac{1}{2}} + \frac{\Delta t}{2}) \right).
\end{equation}

For two-dimensional case, a trick is adopted to reduce memory variables. Suppose $\gamma_P \le \gamma_S$ so that $y^{4\gamma_S -4\gamma_P}$ is not singular, then we have
\begin{equation*}
 \int_0^{+\infty} y^{4\gamma_S-1} \Phi[\bvarepsilon](x, z, y, t) \D y   =  \int_0^{+\infty}  y^{4\gamma_P-1} y^{4\gamma_S -4\gamma_P} \Phi[\bvarepsilon](x, z, y, t) \D y,
\end{equation*}
and the Laguerre functions can be chosen as $\mathscr{L}_{m
}^{(4\gamma_P-1, \beta)}(y)$, instead of $\mathscr{L}_{m}^{(4\gamma_S-1, \beta)}(y)$.


\section{Numerical experiments}
\label{sec.numerical}

In this section, numerical experiments on 1-D diffusive wave equation and 2-D constant-Q wave equation are performed to give a thorough benchmark on the SMOS scheme. Our concern includes an investigation of  the convergence with respect to the time step $\Delta t$, number of spatial collocation points $N_x$ and the memory length $M$, as well as the scaling factor $\beta$. Afterward we present the benchmarks on 2-D constant-Q viscoelastic wave equation and give a detailed study of convergence with respect to the memory length $M_P = M_S = M$ and the effect of scaling.

For spatial discretization, we adopt the staggered-grid pseudo-spectral method in discretizing the spatial derivatives, which is suggested to improve both stability and accuracy in applications of fluid dynamics or elasticity \cite{bk:Fornberg1998}. Suppose the finite computational domain is $[x_{\min}, x_{\max}]$ in each dimension, then the staggered grid mesh is $x_{\min} = x_0 < x_{1/2} < x_1 < x_{3/2} < \dots < x_{N-1} < x_{N-1/2} < x_{N} = x_{\max}$, where $x_j = x_{\min} + j \Delta x$, $x_{j+1/2} = x_{\min} + (j+1/2) \Delta x$ with the spacing $\Delta x = \frac{x_{\max} - x_{\min}}{N}$. For 1-D problem, the grid mesh for strain $\sigma$ and velocity $v$ are $\{x_j\}_{j=0}^{N-1}$ and  $\{x_{j+1/2}\}_{j=0}^{N-1}$, respectively, while that in the 2-D problem can be found in \cite{bk:Fornberg1998}. In order to preclude the errors induced by the artificial boundary condition \cite{Zhu2017,DuHanZhangZheng2018}, we simply enlarge  the computational domain to avoid the artificial wave reflection.

To measure the numerical errors, we adopt the relative $L^\infty$-error $\mathcal{E}_\infty$ as the performance metric. 
\begin{equation}
\mathcal{E}_{\infty}[{\varphi}](t) = \frac{\max_{\bx \in \mathcal{X}}  |\varphi_{\textup{num}}(\bx, t) - {\varphi}_{\textup{ref}}(\bx, t)|}{\max_{\bx \in \mathcal{X}}  |{\varphi}_{\textup{ref}}(\bx, t)|},
\end{equation}
where $\varphi$ is a scalar quantity. For 1-D problem, $\varphi(x, t) = v(x, t)$ and for 2-D problem, $\varphi(\bx, t) = u_3(\bx, t)$ or $\sigma_{13}(\bx, t)$.

The subroutines $\textup{GEN\_LAGUERRE\_RULE}$ \cite{Burkardt2010} is used to obtain nodes and weights for the generalized Laguerre-Gauss quadrature. All the simulations performed via MATLAB (1-D) or Fortran (2-D) implementations run on the platform:  AMD Threadripper 1920X (3.50GHz, 32MB Cache, 12 Cores, 24 Threads) with 128GB Memory. The parallelization for 2-D problem is realized by the OpenMP library using up to 24 threads.

\subsection{1-D diffusive wave equation without source term}

The first benchmark is to solve the diffusive wave equation \eqref{diffusive_wave_eqn} with $\rho \equiv 1$ and $C=1$. 
The initial condition is set as
\begin{align}
v(x, t) |_{t = 0^+} &= \me^{-x^2}, \quad \frac{\partial}{\partial t}v(x, t) |_{t = 0^+} = 0, \quad &-\infty < x < \infty,\\
\sigma(x, t) |_{t = 0^+} &= {_C}D_t^{2\gamma} \varepsilon(x, t)|_{t = 0^+} =  0,  \quad &-\infty < x < \infty, \\
\Phi(x, y, t) |_{t = 0^+} & = \Gamma(1 - 2\gamma) \me^{-y^2} \sigma(x, t)|_{t = 0^+} = 0, \quad & -\infty < x, y < \infty,
\end{align}
and $v(\pm \infty, t) = 0$. The initial strain is derived by $\frac{\partial }{\partial x} \varepsilon(x, t)  |_{t = 0^+} = \frac{\partial }{\partial t} v(x, t) |_{t = 0^+}$. 

In order to provide a reasonable reference for small $\gamma$, we utilize the intriguing spectral approximation to the exact solution of \eqref{exact_solution_1d_diffusive}. Let $s = \sqrt{C} t^{1 - \gamma}$, then
\begin{equation*}
\begin{split}
\varepsilon(x, t) &= \frac{1}{2} \int_0^{+\infty}  M_{1- \gamma}(y) \left[ \me^{-(x - sy)^2 } + \me^{(x+sy)^2}\right]  \D y \\
&\approx
\left\{
\begin{split}
&\frac{1}{2} \sum_{j=0}^{M_y} \omega_j^{(0, \frac{1}{s})}  \me^{y_j^{(0, 1)}} M_{1-\gamma}(y_j^{(0, \frac{1}{s})})\left[ \me^{-(x - s y_j^{(0, \frac{1}{s})})^2 } + \me^{(x+ s y_j^{(0, \frac{1}{s})})^2}\right], ~~  t< 1,\\
&\frac{1}{2s} \sum_{j=0}^{M_y} \omega_j^H M_{1-\gamma}\left(\frac{x  - y_j^H}{s}\right), ~~ t \ge 1,
\end{split}
\right.
\end{split}
\end{equation*}
 which can be approximated by either the Laguerre-Gauss quadrature or the Hermite-Gauss quadrature, with
$y_j^H$ and $\omega_j^H$ the collocations points and weights of the Hermite-Gauss quadrature, respectively. The calculation of the Mainardi function is entirely not trivial. We try to realize it by combining the generalized Laguerre-Gauss quadrature and its asymptotic expansion, and details are put in \ref{sec.Mainardi_function}.



The computational domain is $\mathcal{X} = [x_{\min}, x_{\max}] = [-15, 15]$. The final time is $T = 8$. Other parameters include: the fractional order $\gamma = 0.1, 0.05, 0.01$, the time step $\Delta t = 2^{-4}, 2^{-5}, 2^{-6}, 2^{-7}, 2^{-8}$, the memory length $M+1 = 4, 8, 16, 32, 64$ and $N_x = 8, 16, 32, 64, 128$. In order to evaluate the performance of SMOS, we first investigate the convergence with  $\Delta t$ (with $N_x = 128$, $M+1 = 256$ fixed), $N_x$ (with $\Delta t = 10^{-4}$, $M+1 = 256$ fixed) as presented in  Figure \ref{fig_1d_convergence}.


A visualization of the velocity propagation under different memory lengths (with $\Delta t = 10^{-4}$, $N_x = 128$, $\beta =1$) is presented in Figure \ref{fig_1d_velocity_evo}. The velocity splits into two branches, each of which propagates in opposite directions and reaches the centers $x = \pm 8$ at $T= 8$, respectively. In contrast to the elastic case, the attenuation of wave front can  be observed as the height of waveform decreases, and the attenuation level becomes more evident for larger $\gamma$.

The main goal is to investigate how the scaling factor $\beta$ influences the accuracy under the parameters: $\Delta t = 10^{-4}$, $N_x= 128$ and fixed $M$. The maximal errors and averaged computational time are recorded in Table 
\ref{1d_scaling}. The convergence with respect to $M$ is presented in Figure \ref{fig_1d_memory_convergence}. From the results, we can make the following observations.

\begin{figure}[!h]
\centering
\subfigure[Convergence w.r.t $\Delta t$.]{{\includegraphics[width=0.49\textwidth,height=0.27\textwidth]{./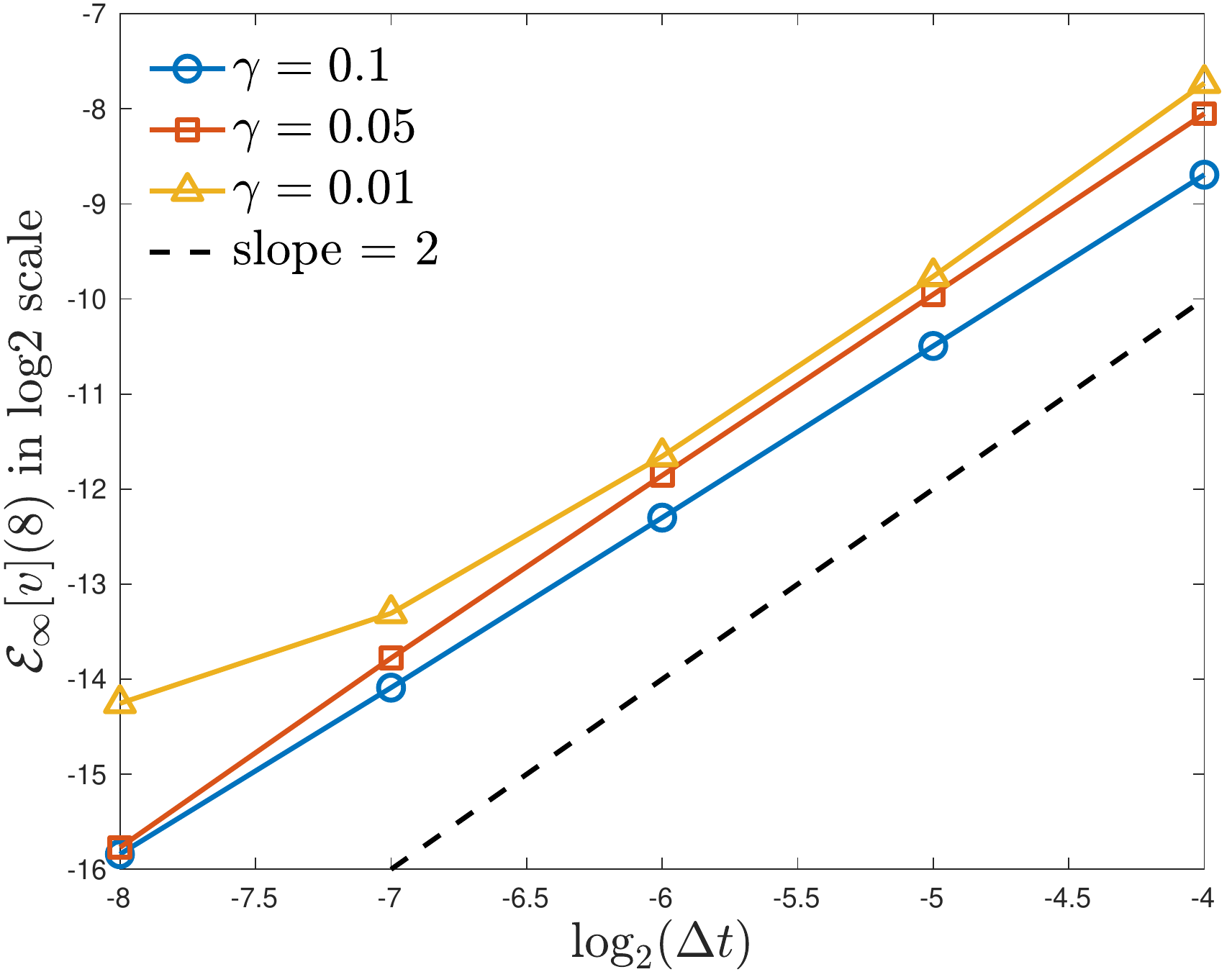}}}
\subfigure[Convergence w.r.t. $N_x$.]{{\includegraphics[width=0.49\textwidth,height=0.27\textwidth]{./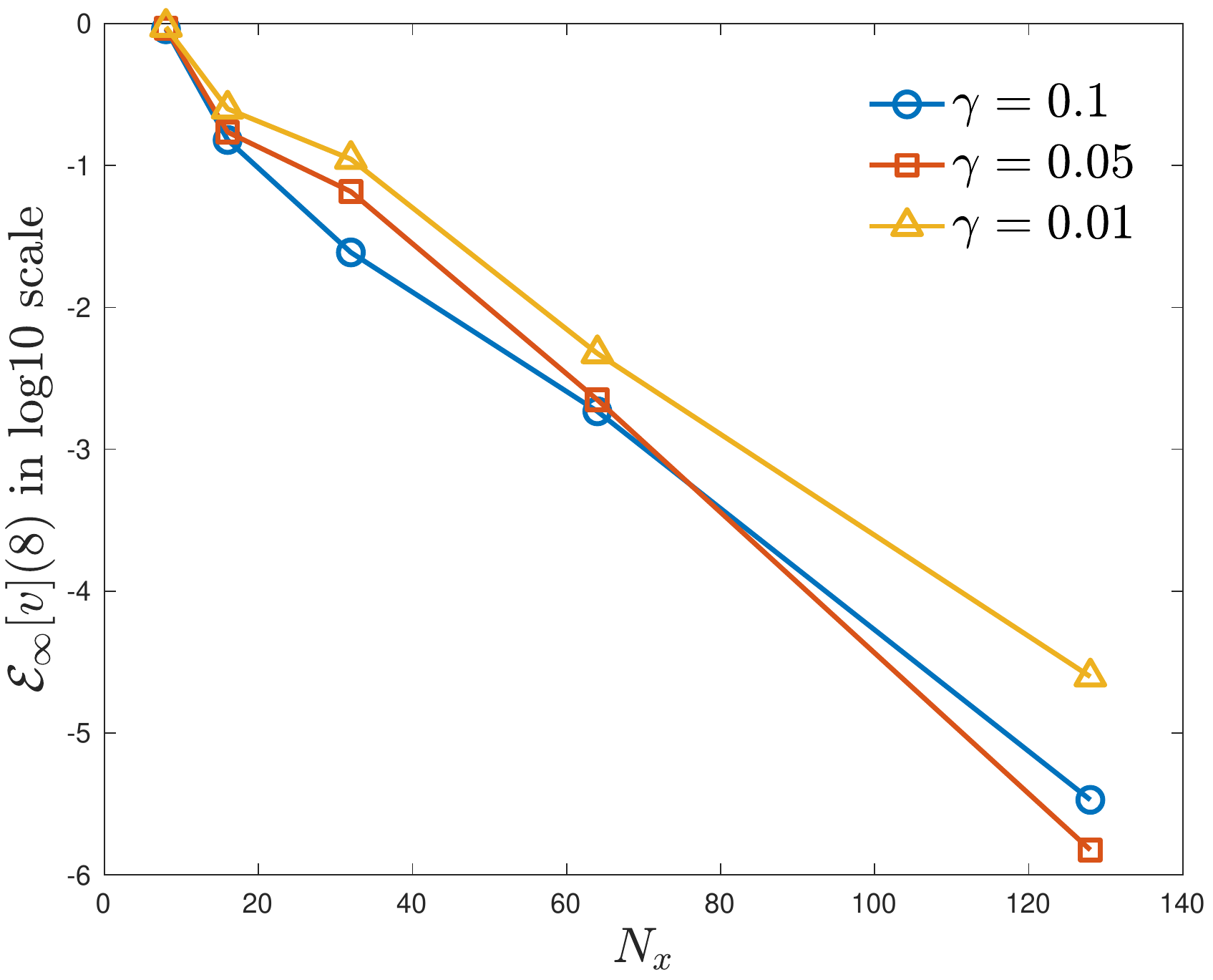}}}
\caption{1-D diffusive wave propagation: The convergence with respect to time step $\Delta t$ and number of spatial collocation points $N_x$. For different $\gamma$, the second-order convergence with $\Delta t$ is always observed, which coincides with the theoretical value in the Strang splitting. Meanwhile, the spectral convergence is achieved in the spatial direction.}
\label{fig_1d_convergence}
\end{figure}

\begin{figure}[!h]
\centering
\subfigure[$M+1 = 4$.]{{\includegraphics[width=0.49\textwidth,height=0.27\textwidth]{./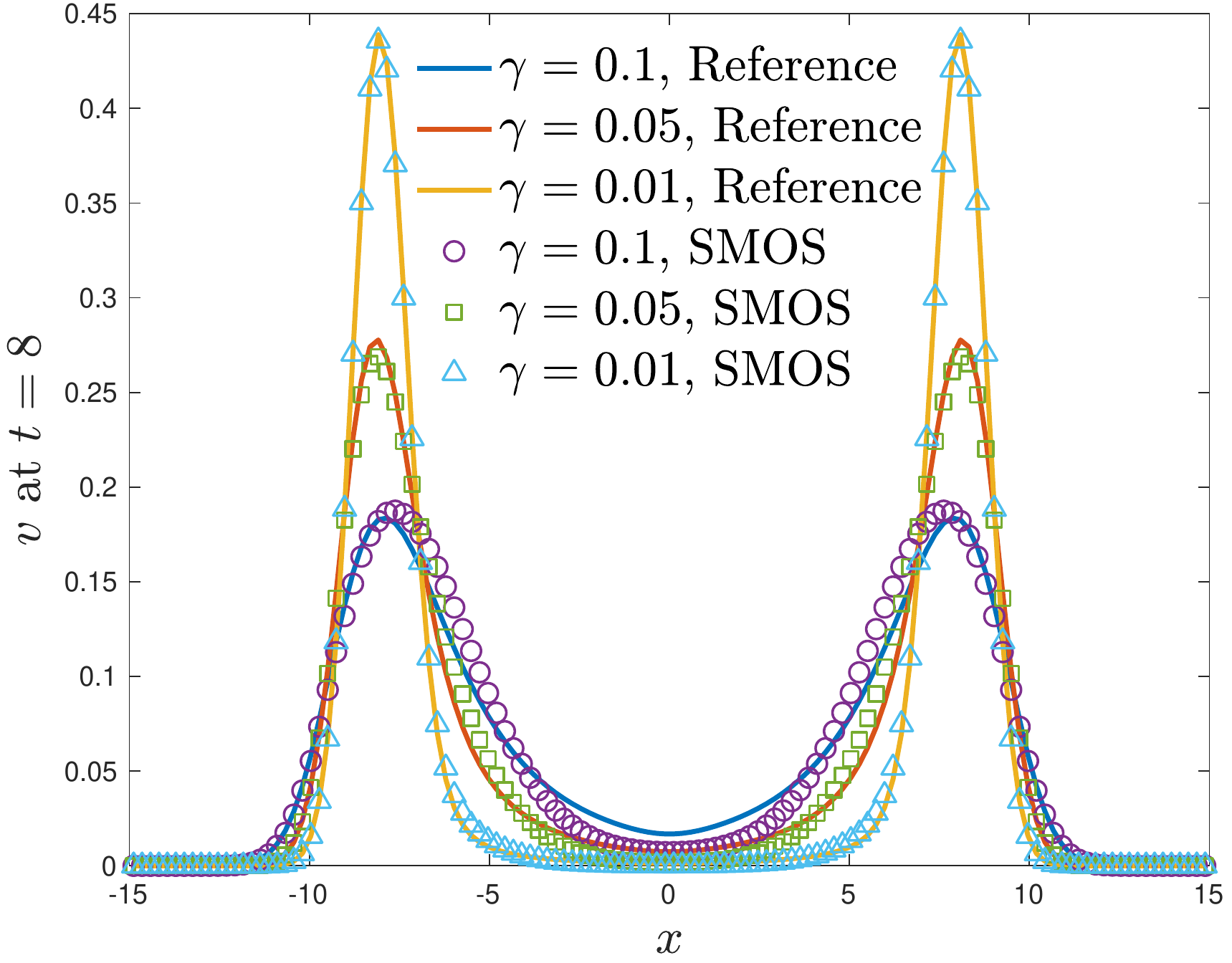}}}
\subfigure[$M+1 = 8$.]{{\includegraphics[width=0.49\textwidth,height=0.27\textwidth]{./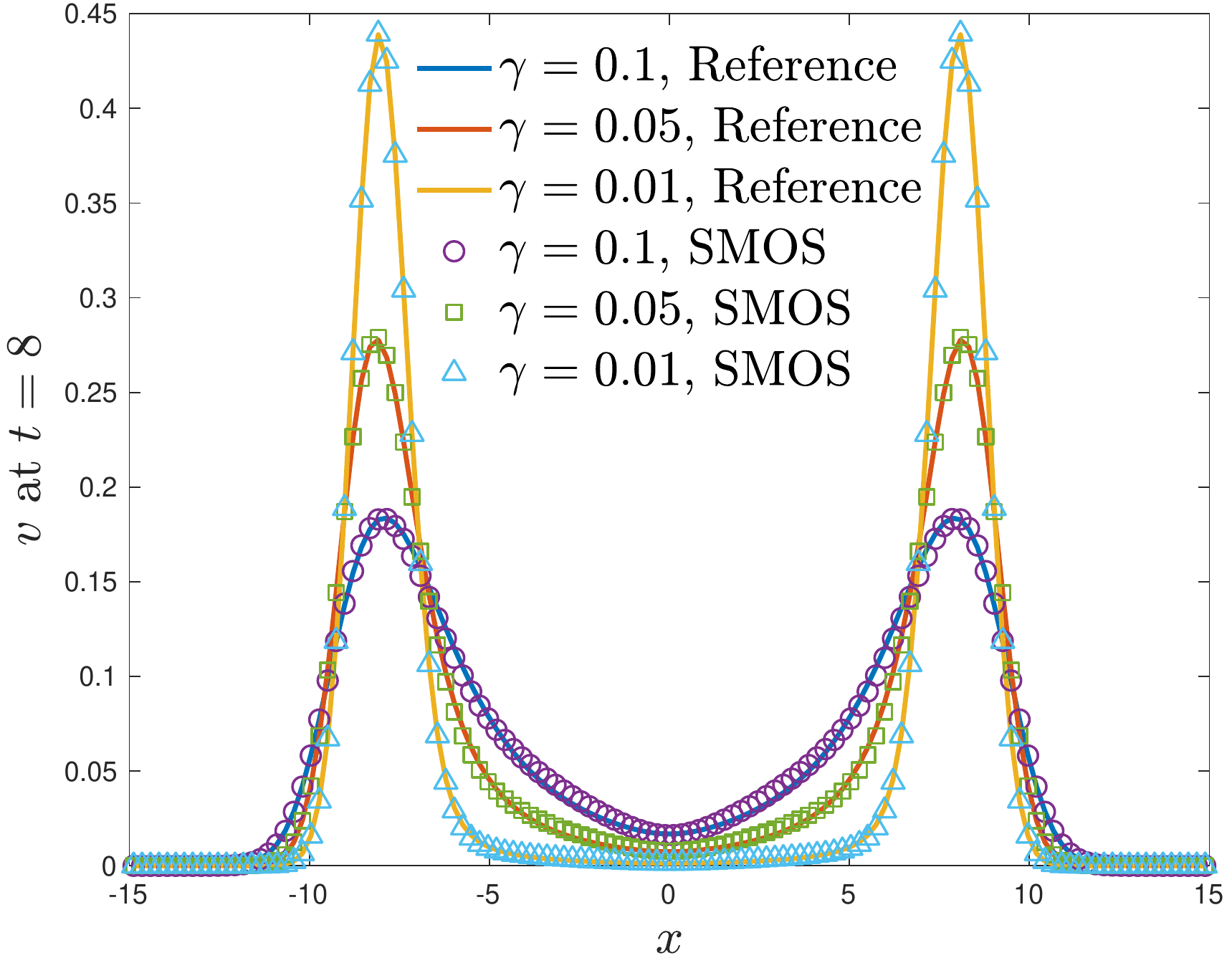}}}
\\
\centering
\subfigure[$M+1 = 16$.]{{\includegraphics[width=0.49\textwidth,height=0.27\textwidth]{./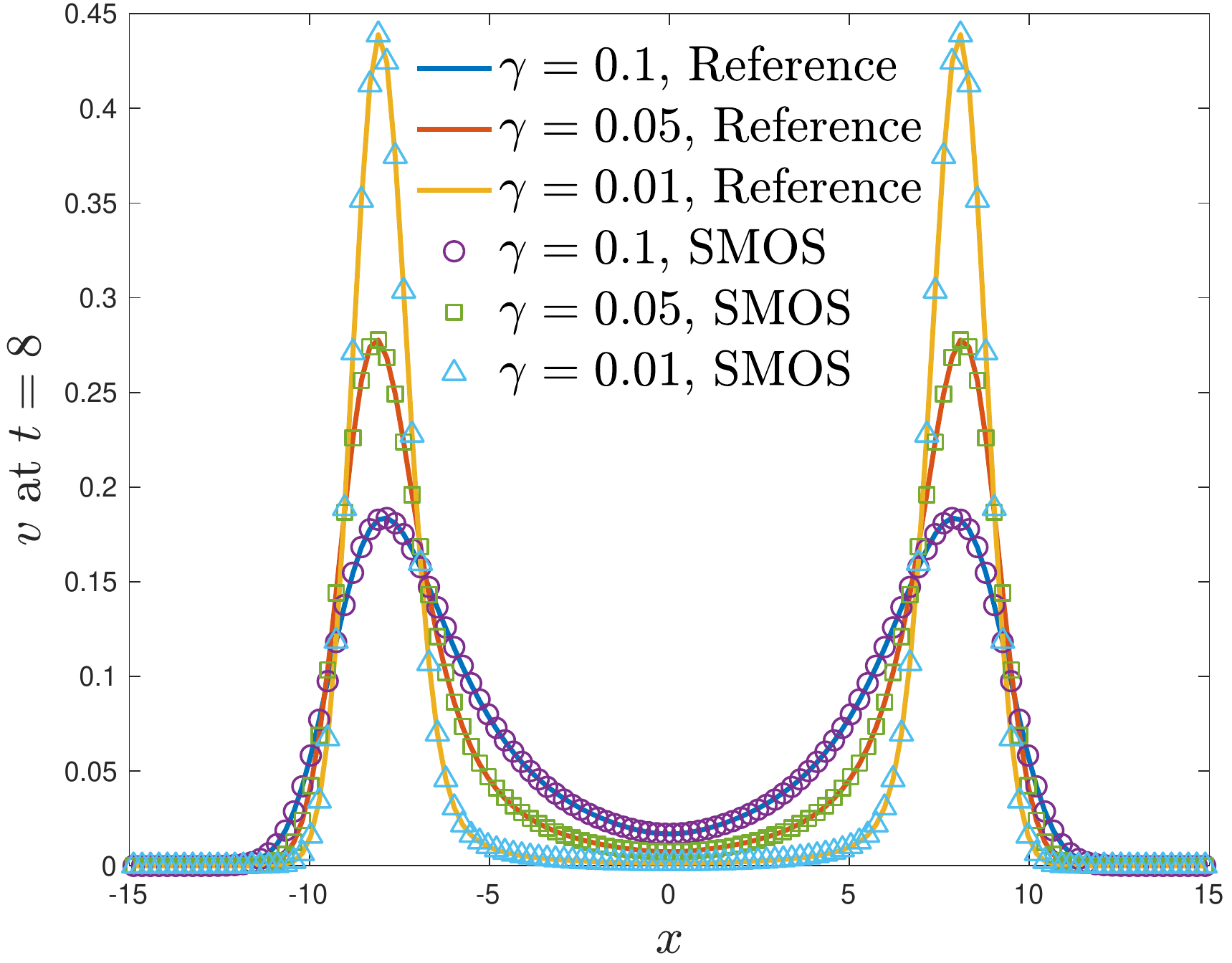}}}
\subfigure[$M+1 = 256$.]{{\includegraphics[width=0.49\textwidth,height=0.27\textwidth]{./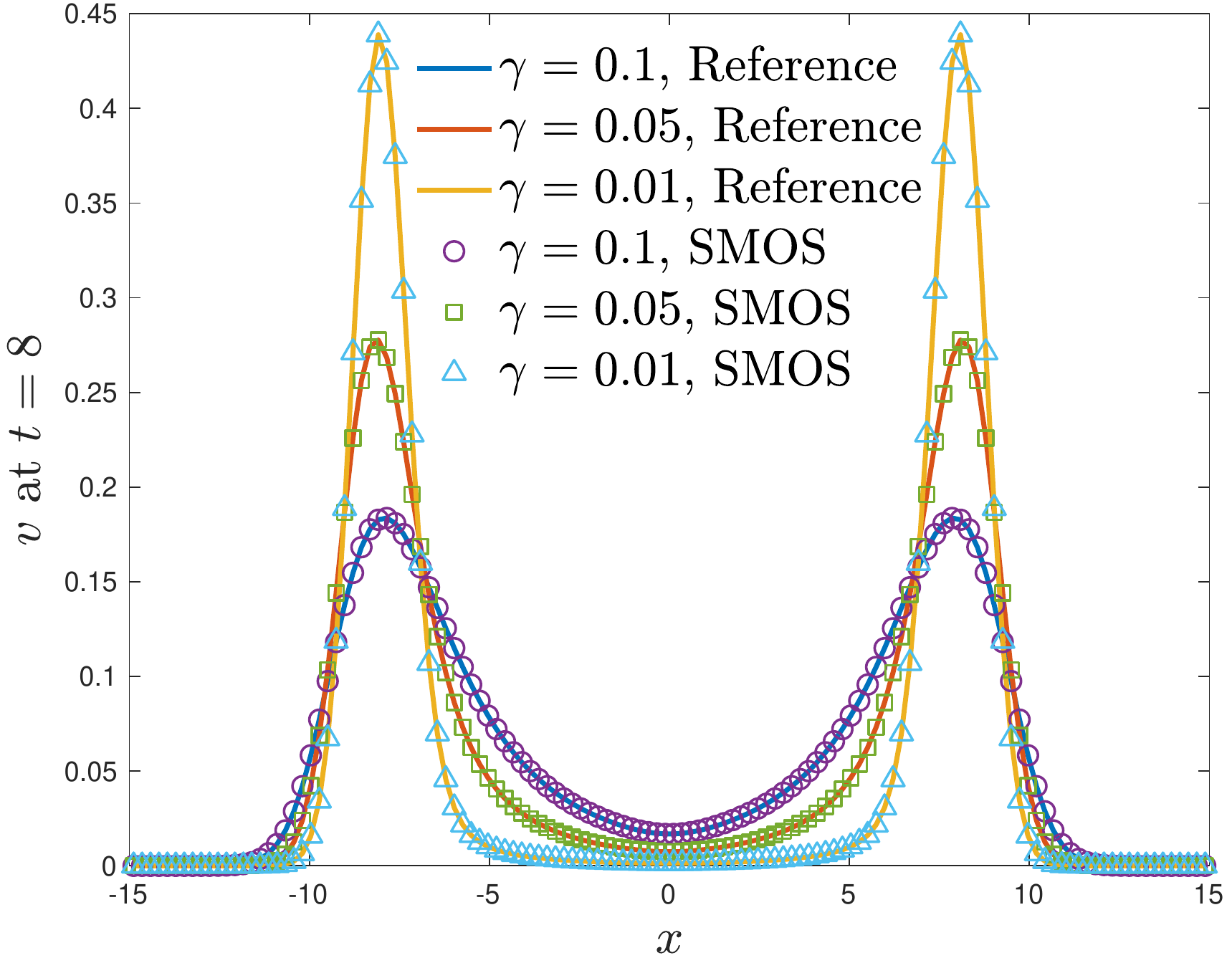}}}
\caption{1-D diffusive wave propagation: The propagation of velocity at $T= 8$, under different memory $M+1$. The velocity splits into two parts and the attenuation of waveform in the propagation is clearly observed. When $M$ is small, numerical solutions slightly deviate the exact ones, but the errors can be evidently suppressed when $M+ 1 =16$. }
\label{fig_1d_velocity_evo}
\end{figure}
\begin{figure}[!h]
\centering
\subfigure[$\beta = 0.5$.]{\includegraphics[width=0.49\textwidth,height=0.27\textwidth]{./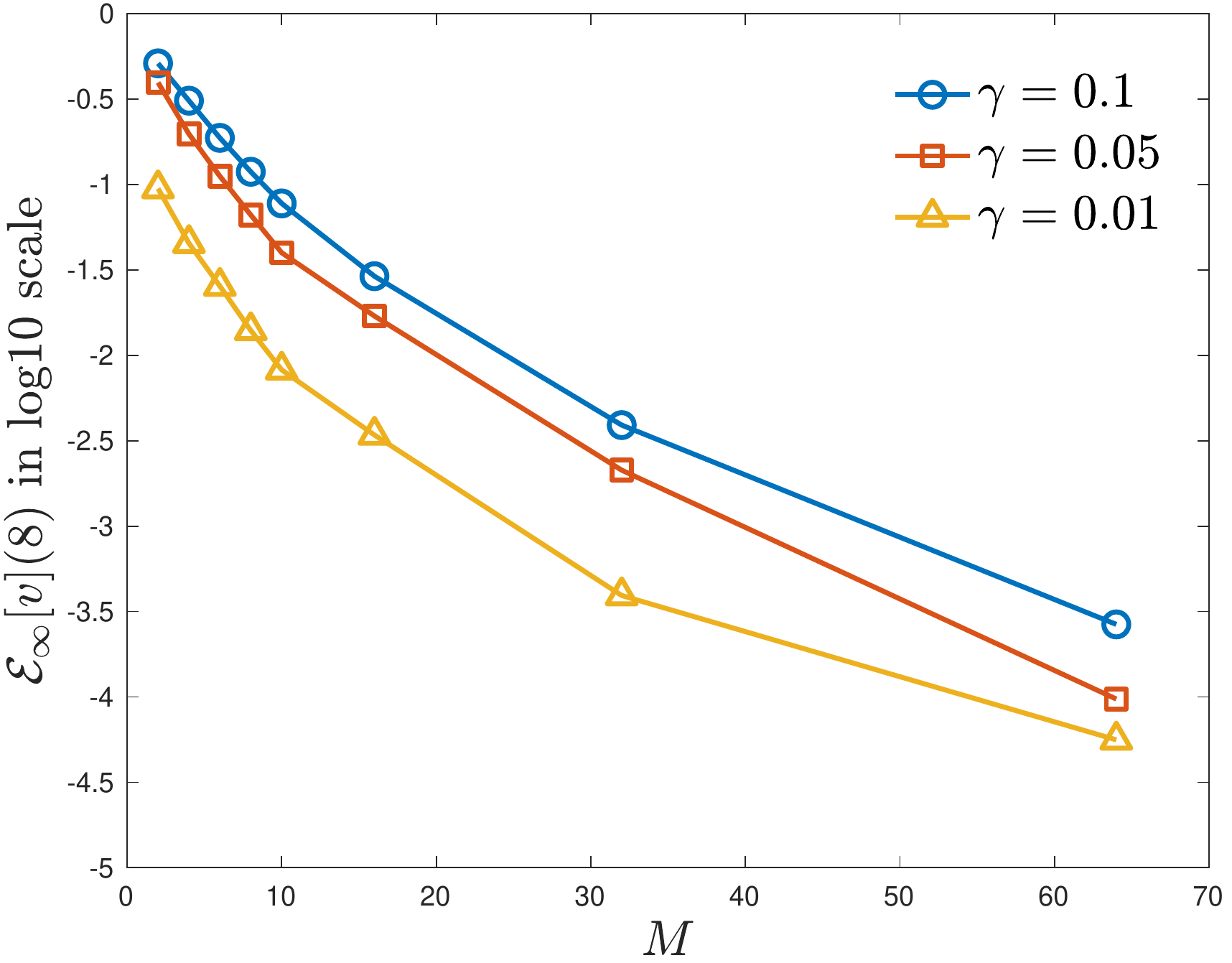}}
\subfigure[$\beta = 1$.]{\includegraphics[width=0.49\textwidth,height=0.27\textwidth]{./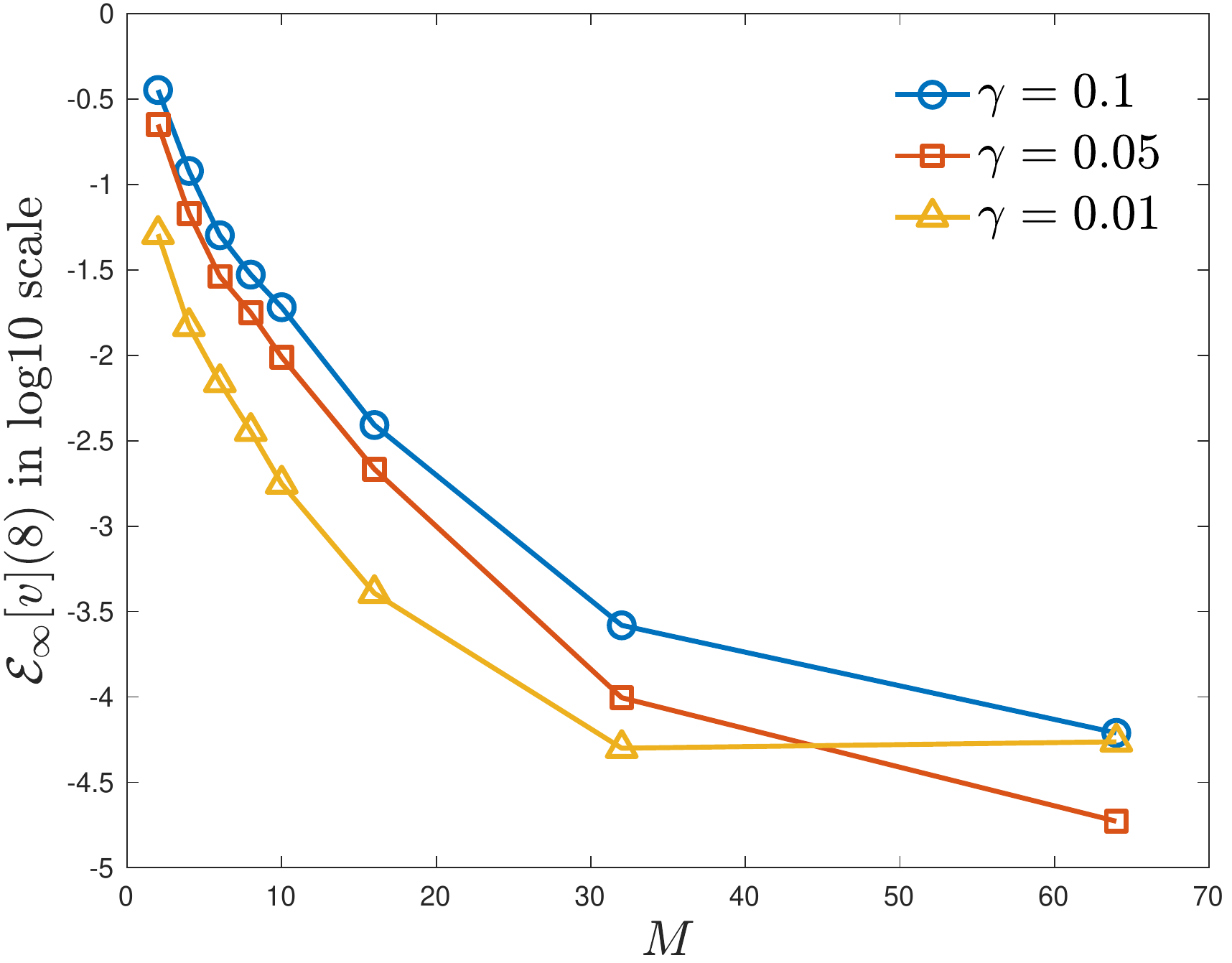}}
\\
\centering
\subfigure[$\beta = 1.5$.]{{\includegraphics[width=0.49\textwidth,height=0.27\textwidth]{./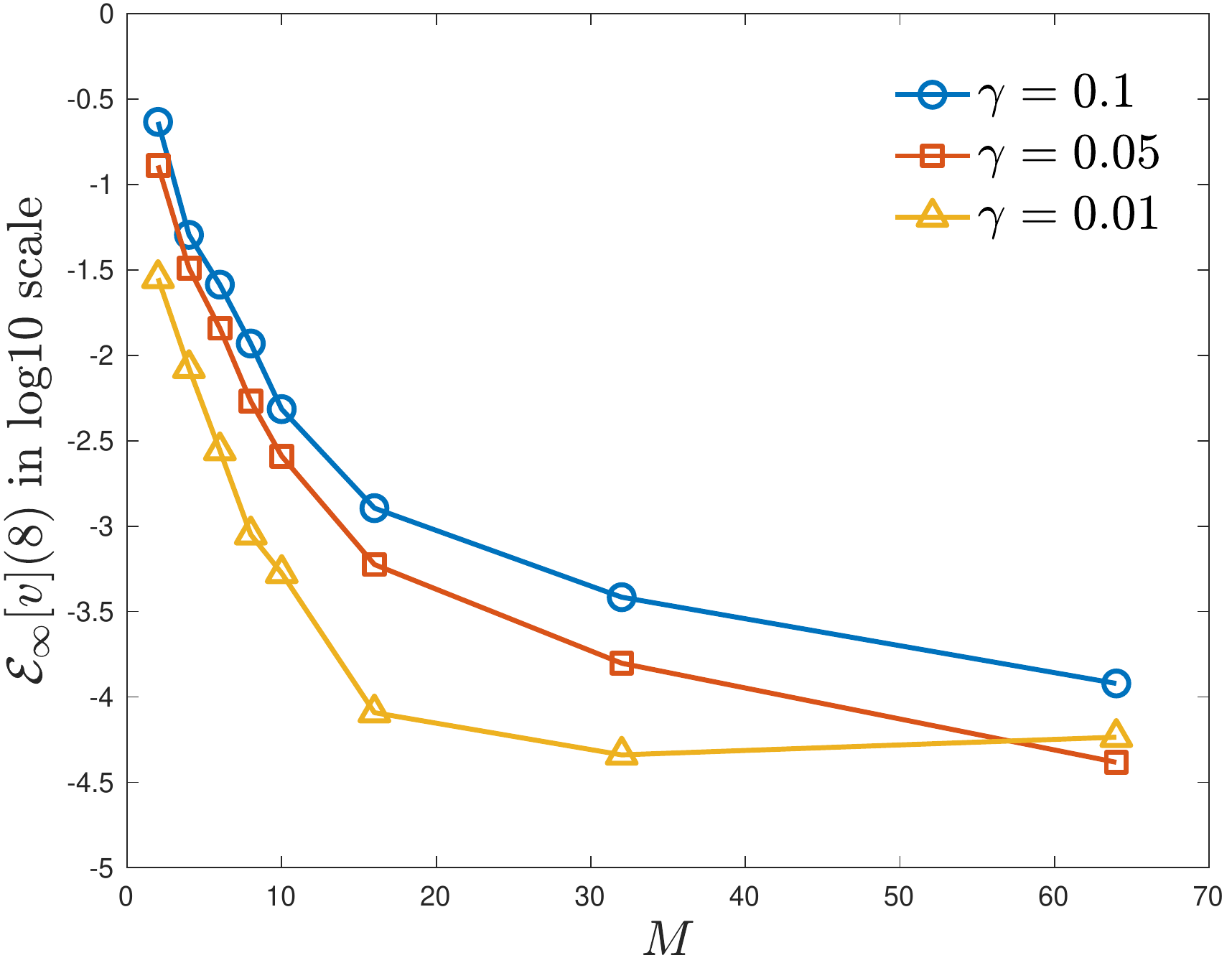}}}
\subfigure[$\beta = 2$.]{{\includegraphics[width=0.49\textwidth,height=0.27\textwidth]{./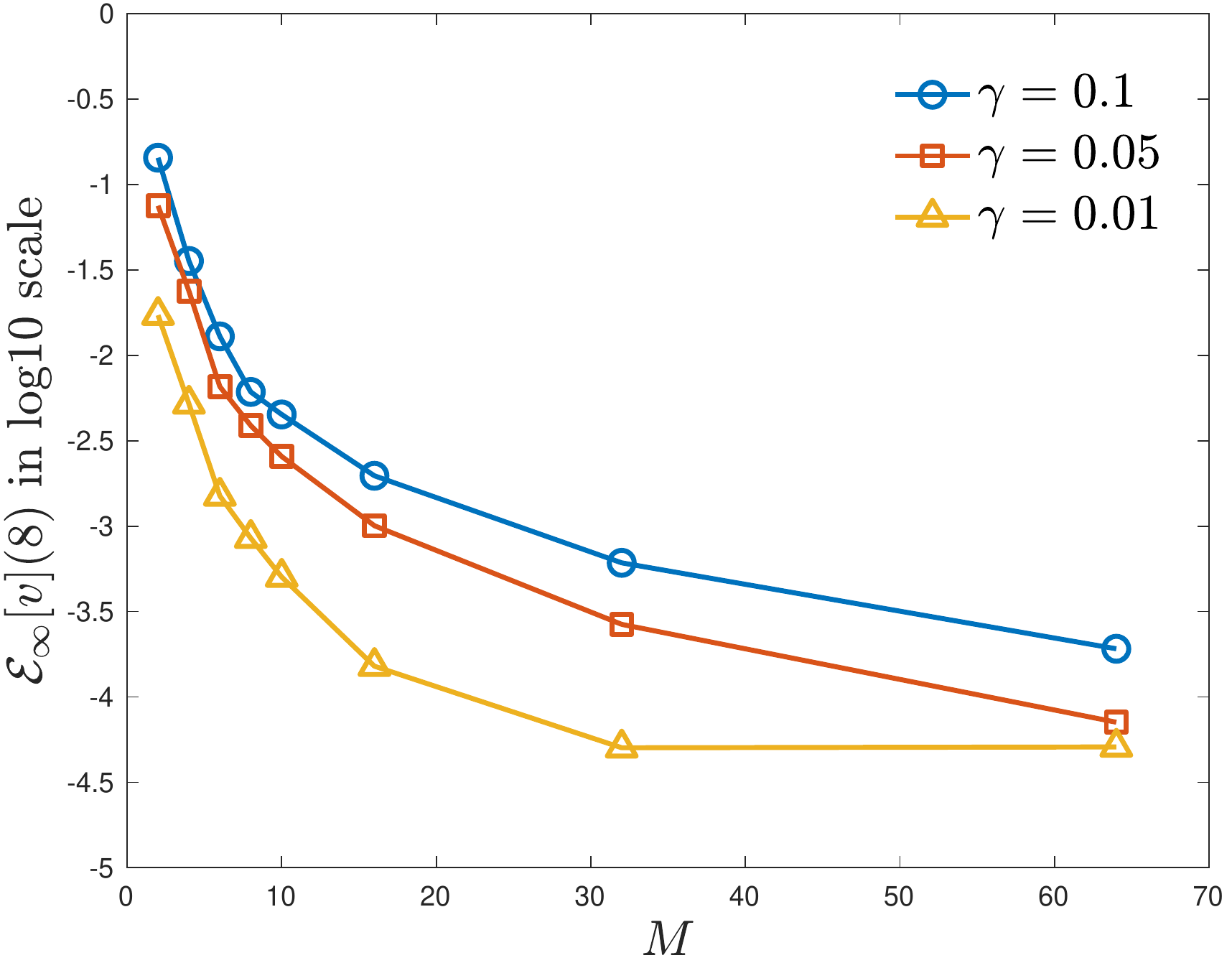}}}
\caption{1-D diffusive wave propagation: The convergence with respect to $M$, under different scaling factors $\beta$. SMOS convergences rapidly when $M+ 1 \le 32$, but the trend slows down for larger $M$. When $\gamma$ is small, only a few memory variables can accurately seize the major contribution.}
\label{fig_1d_memory_convergence}
\end{figure}

\begin{table}[!h]
  \centering
  \caption{\small 1-D diffusive wave equation: $\mathcal{E}_{\infty}[v]$ at $T= 8$ and the averaged computational time, with $\Delta t = 10^{-4}$ and $N_x = 128$. \label{1d_scaling}
}
 \begin{lrbox}{\tablebox}
 \begin{tabular}{c|c|cccccc|c}
 \hline\hline
 $\gamma$	&\diagbox{$M$}{$\beta$} 	&0.5	&	1	&	1.5	&	2	&	4	&	8	& 	time(s)	\\
 \hline
\multirow{4}{*}{$0.1$}
&	7	&	1.188E-01	&	2.967E-02	&	1.172E-02	&	6.103E-03	&	2.061E-02	&	6.896E-02	&	4.08 	\\
&	15	&	1.399E-02	&	3.908E-03	&	1.274E-03	&	1.971E-03	&	6.042E-03	&	1.882E-02	&	4.27 	\\
&	31	&	3.903E-03	&	2.626E-04	&	3.836E-04	&	6.096E-04	&	1.857E-03	&	5.681E-03	&	4.82 	\\
&	63	&	2.659E-04	&	6.159E-05	&	9.836E-05	&	1.914E-04	&	5.853E-04	&	1.783E-03	&	5.75 	\\
\hline
\multirow{4}{*}{$0.05$}
&	7	&	6.632E-02	&	1.774E-02	&	5.383E-03	&	3.877E-03	&	1.394E-02	&	5.150E-02	&	4.11 	\\
&	15	&	2.905E-02	&	2.157E-03	&	5.959E-04	&	1.005E-03	&	3.522E-03	&	1.246E-02	&	4.29 	\\
&	31	&	2.126E-03	&	9.896E-05	&	1.575E-04	&	2.660E-04	&	9.332E-04	&	3.269E-03	&	4.84 	\\
&	63	&	9.718E-05	&	1.870E-05	&	3.133E-05	&	7.110E-05	&	2.535E-04	&	8.883E-04	&	5.78 	\\
\hline
\multirow{4}{*}{$0.01$}
&	7	&	1.399E-02	&	3.596E-03	&	8.915E-04	&	8.503E-04	&	3.430E-03	&	1.343E-02	&	4.12 	\\
&	15	&	3.418E-03	&	4.047E-04	&	8.091E-05	&	1.513E-04	&	7.348E-04	&	3.008E-03	&	4.28 	\\
&	31	&	3.924E-04	&	5.009E-05	&	4.578E-05	&	5.039E-05	&	1.350E-04	&	6.680E-04	&	4.87 	\\
&	63	&	5.606E-05	&	5.461E-05	&	5.207E-05	&	4.733E-05	&	6.596E-05	&	1.851E-04	&	5.77 	\\
\hline\hline
 \end{tabular}
\end{lrbox}
\scalebox{0.77}{\usebox{\tablebox}}
\end{table}

\begin{itemize}

\item[(1)] The second-order convergence with respect to $\Delta t$ is clearly observed and accords with the theoretical value perfectly.  In addition, the spectral convergence of the staggered pseudo-spectral method is also verified.

\item[(2)] As presented in Figure \ref{fig_1d_memory_convergence}, the spectral convergence for the Laguerre spectral method is clearly observed before $M+1 \le 32$. But after the pre-asymptotic range, the Laguerre spectral approximation converges only algebraically, instead of exponentially. That accounts for the observation that  further increasing the memory length might not improve the accuracy when $\gamma = 0.01$. For sufficiently small $\gamma$ (e.g., $\gamma = 0.01$),  using only a few nodes can produce accurate results, as shown in Figure \ref{fig_1d_velocity_evo}. This provides some evidence to validate our short-memory principle as too many memory variables might not necessarily bring in significant improvement in accuracy.

\item[(3)] The effect of scaling is shown in Figure \ref{fig_1d_memory_convergence} and Table \ref{1d_scaling}. Even without source term, for relatively small $M+1\le 16$, the numerical results under $\beta= 2$ usually outperform those without scaling, whereas choosing $\beta < 1$ diminishes the numerical accuracy. Such result also coincides with our theoretical prediction in Proposition \ref{prop_LG}. In other words, it manifests the importance of choosing an appropriate scaling factor in SMOS as it may indeed significantly enhance the accuracy, without introducing additional numerical cost. However, too large scaling factor is not recommended as it may lead to larger truncation errors.

\end{itemize}

\subsection{2-D Constant-Q P- and S-wave modeling with source term}

For 2-D problem, we consider the wave propagation from a horizontal source function with Ricker-type wavelet history,
\begin{equation}
\begin{split}
f_1(x, z, t) = A(x, z) f_r(t), \quad f_3(x, z, t) = 0,
\end{split}
\end{equation}
where the amplitude function $A(x, z)$ is simply set as a Gaussian profile centered at $(x_0, z_0)$,
\begin{equation}
A(x, z) = \exp(-{(x-x_0)^2}/{2}) \exp(-{(z-z_0)^2}/{2}) 
\end{equation}
and the Ricker wavelet is given by
\begin{equation}
f_r(t) = (1 - 2(\pi f_P (t- d_r)^2) \exp(-(\pi f_P(t- d_r))^2),
\end{equation}
where $f_P$ is the peak frequency and $d_r$ is the temporal delay.

The medium is supposed to be in equilibrium at $t = 0$, namely, stress tensor and velocity vector are set to zero everywhere in the medium \cite{Virieux1986}.
\begin{align}
&v_1(x, z, t) |_{t = 0^+} = v_3(x, z, t) |_{t = 0^+} = 0, \\
&\sigma_{11}(x, z, t) |_{t = 0^+} = \sigma_{33}(x, z, t) |_{t = 0^+} = \sigma_{13}(x, z, t) |_{t = 0^+} = 0, \\
&\Phi_{11}(x, z, y, t) |_{t = 0^+} = \Phi_{33}(x, z, y, t) |_{t = 0^+} = \Phi_{13}(x, z, y, t) |_{t = 0^+} = 0.
\end{align}
Two typical groups of parameters are collected in Table \ref{Parameters}, including wave velocities, density and quality factors for different medium found in \cite{Carcione2009}, and the reference frequency $\omega_0$ is about $2\pi \times 100 $Hz. Thus the constants $C_P$ and $C_S$ can be evaluated by Eq.~\eqref{def.constant}.

\begin{table}[!h]
  \centering
  \caption{\small Reference wave velocities, density and quality factors 
}\label{Parameters}
 \begin{lrbox}{\tablebox}
 \begin{tabular}{cccccccc}
 \hline\hline
 Medium & $c_P$ (km/s) &  $c_S$ (km/s) & $\rho$ (g/$\textup{cm}^3$) &  $Q_P$ & $Q_S$ &  $\gamma_P$ & $\gamma_S$ \\
 \hline
1	&	3.2	&	1.85	&	2.5	&	32	&	10	&	0.0099	&	0.0317\\	
2	&	3.2	&	1.85	&	2.5	&	100	&	50	&	0.0032	&	0.0064\\	
\hline\hline
 \end{tabular}
\end{lrbox}
\scalebox{1}{\usebox{\tablebox}}
\end{table}

The computational domain is $\mathcal{X} = [0, 120]\times [0, 120]$ with $N_x = N_z = 512$. The parameters for the initial source term are $f_P = 100$Hz and $d_r = 60$. The time step is $\Delta t = 0.001$ and the final instant is $T = 15$. The memory length of the reference solution is $M_P = M_S = 499$. Different groups of memory length $M$ and scaling factor $\beta$ are considered: $M_P = M_S = 7, 15, 31, 63$, $\beta = 0.6, 1, 2, 8, 16,32$. For the sake of comparison, we also calculate the elastic modeling  under $c_P = 3.2$, $c_S = 1.85$, $\rho = 2.5$.

The wave attenuation phenomena are much more complicated in 2-D case.  A comparison among the displacement $u_3$ in $z$-direction and the $(xz)$-component $\sigma_{13}$ of the stress tensor is plotted in Figures \ref{fig_attenuated_wave_u3} and \ref{fig_attenuated_wave_stress13}, respectively. The difference between the viscoelastic wave propagation and the elastic counterpart is transparent. Since the Ricker wavelet source has two peaks at $t = 0$ and $t = 0.24$, two wave fronts are observed in the elastic media and they propagate independently. By contrast, two wave fronts in the viscoelastic media seem to stick together due to the time-lag effect (the power creep) in the stress-strain relation. One can see that the wave attenuation is more evident in the first viscoelastic media and the height of waveform decreases considerably in the time evolution. In the second  viscoelastic media, although the shapes of wave fronts change a lot, there is only a slight reduction in the height due to smaller loss of energy.

\begin{figure}[!h]
\centering
\subfigure[$t = 5$.]{{\includegraphics[width=0.32\textwidth,height=0.18\textwidth]{./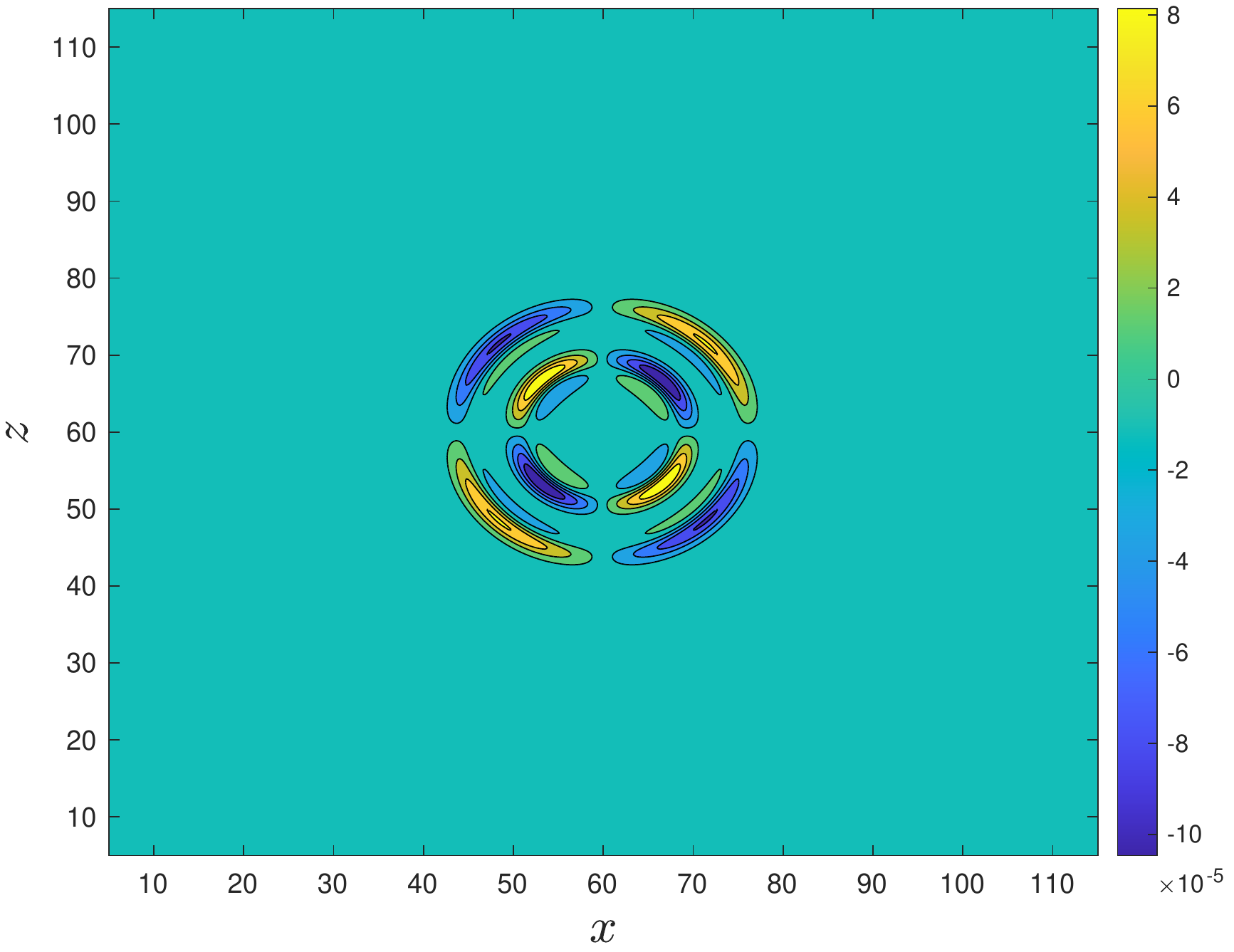}}
{\includegraphics[width=0.32\textwidth,height=0.18\textwidth]{./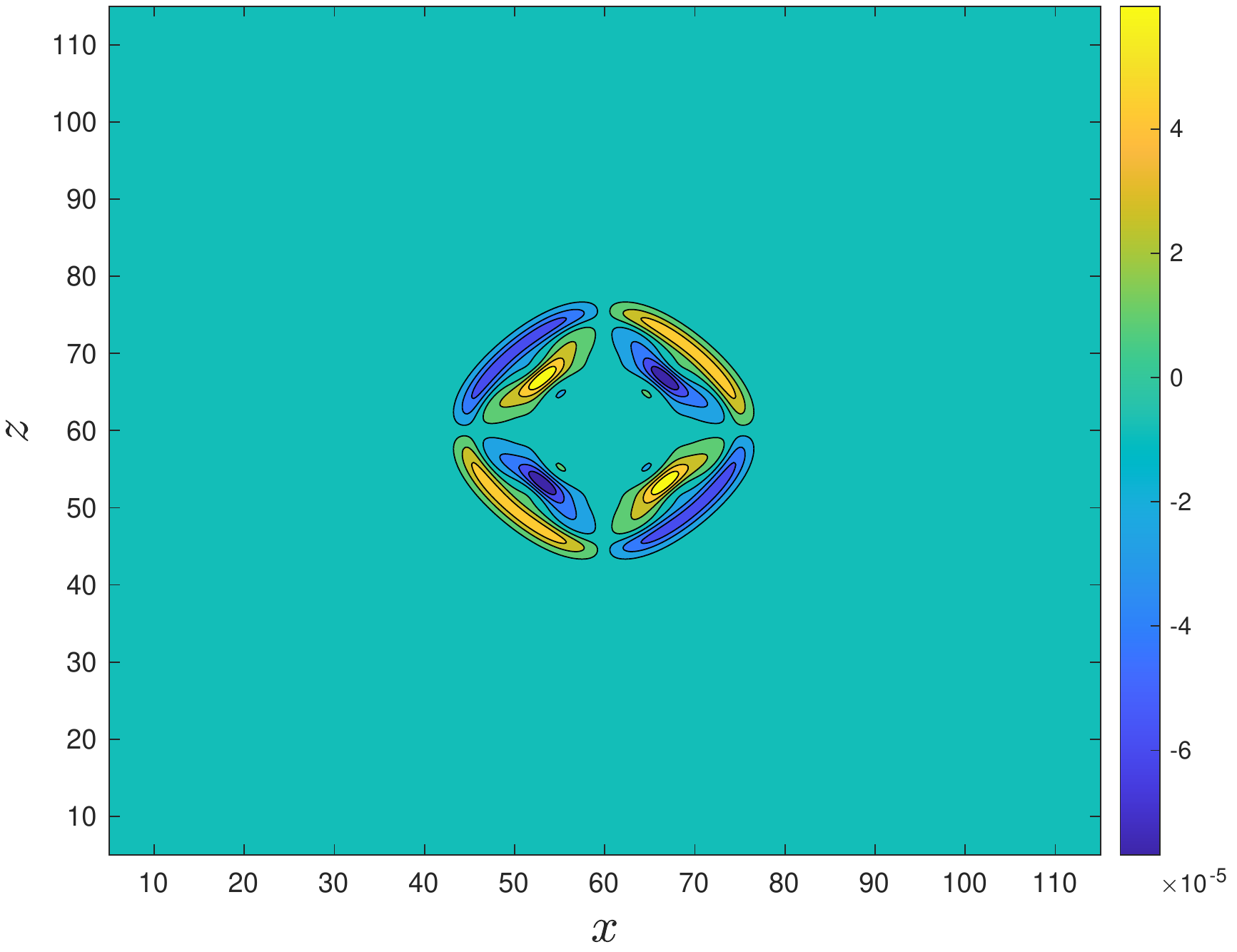}}
{\includegraphics[width=0.32\textwidth,height=0.18\textwidth]{./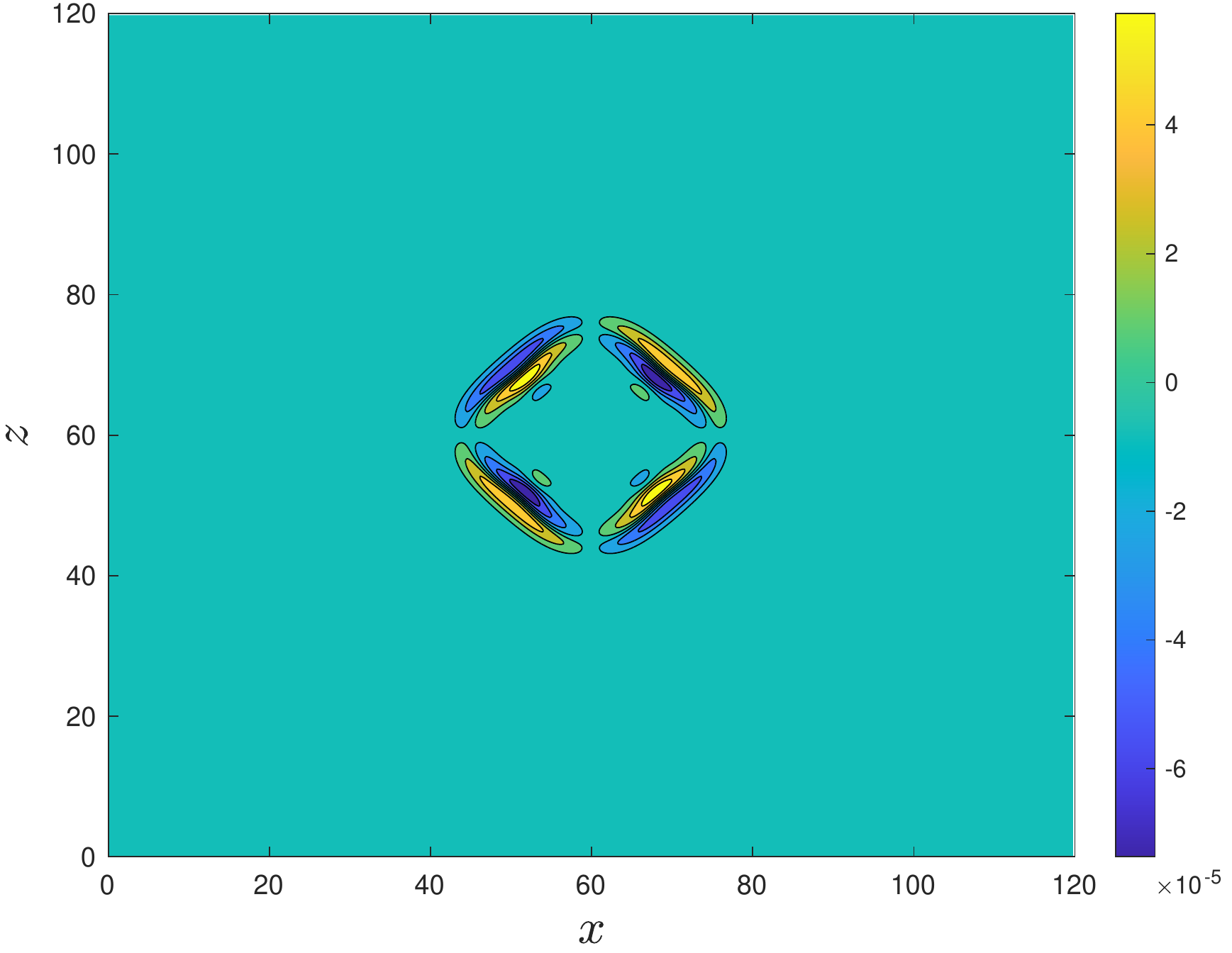}}
}
\\
\centering
\subfigure[$t = 10$.]{{\includegraphics[width=0.32\textwidth,height=0.18\textwidth]{./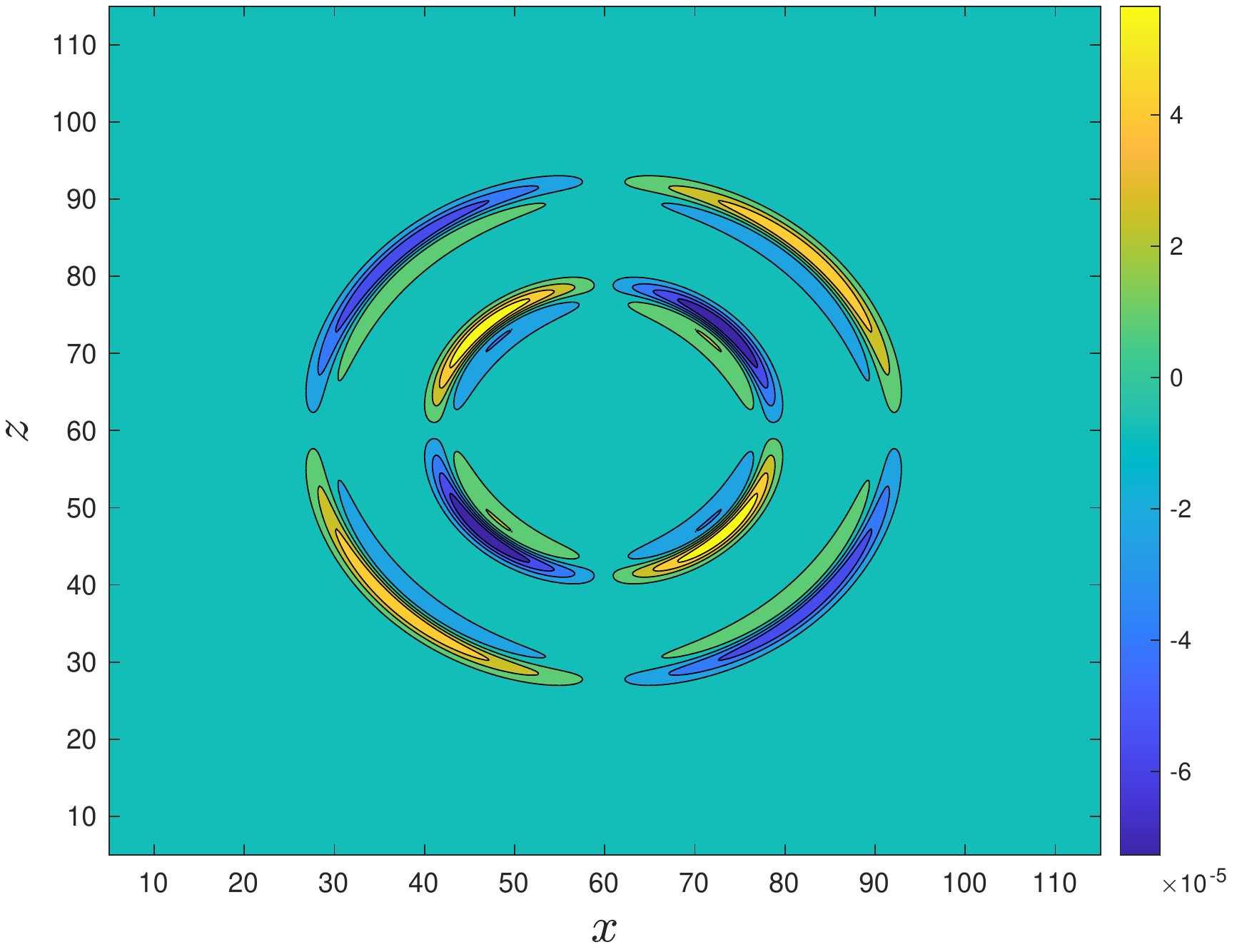}}
{\includegraphics[width=0.32\textwidth,height=0.18\textwidth]{./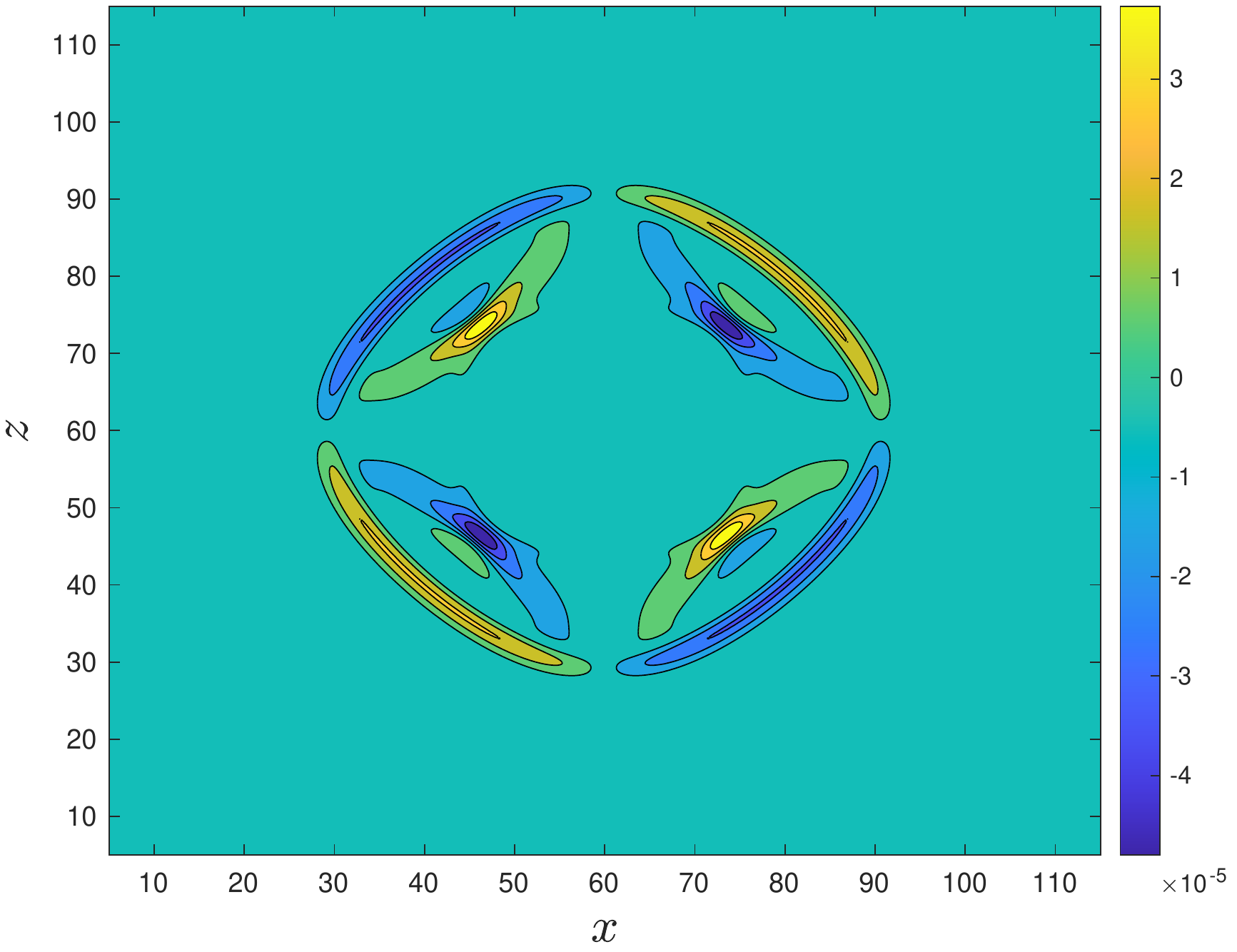}}
{\includegraphics[width=0.32\textwidth,height=0.18\textwidth]{./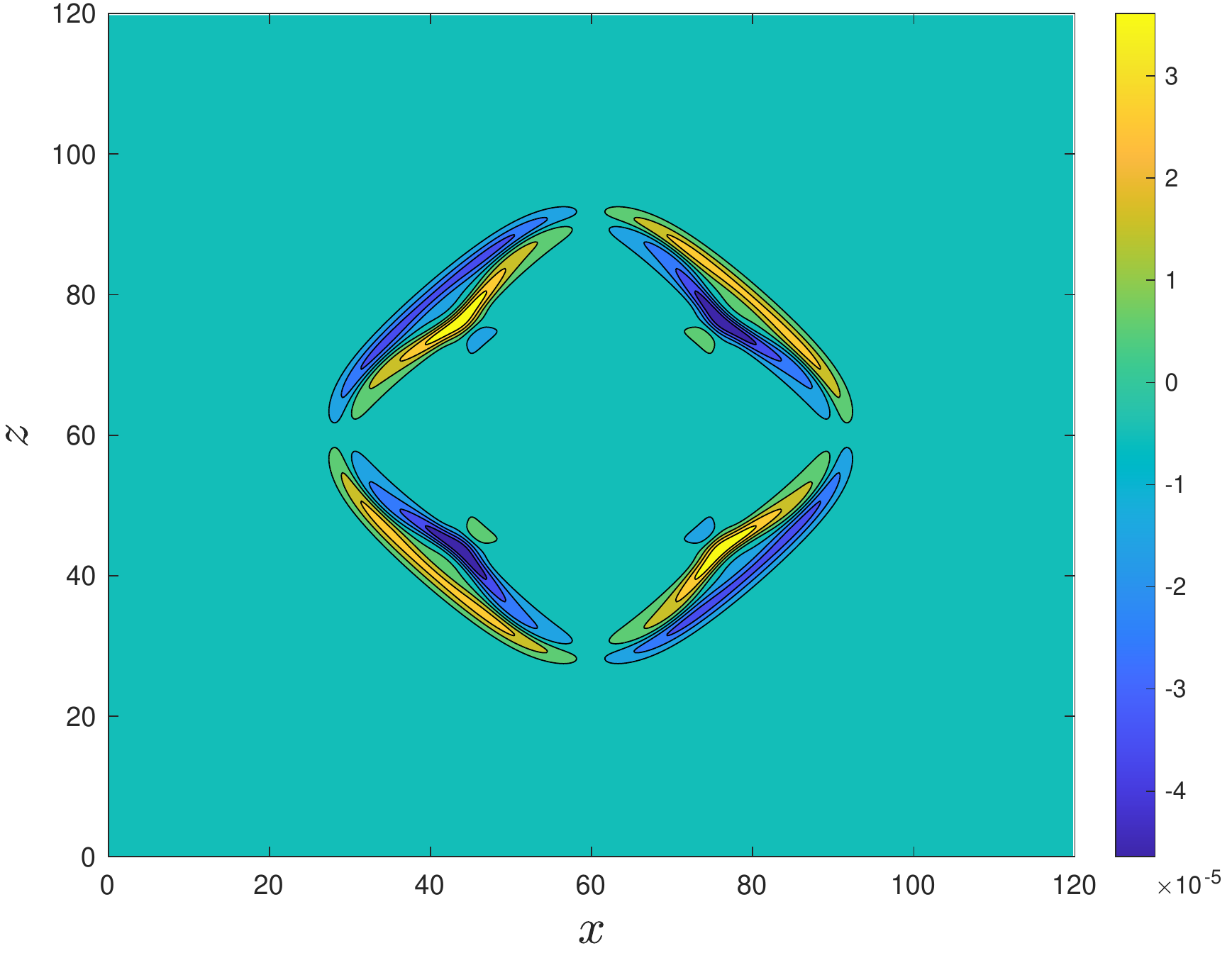}}
}
\\
\centering
\subfigure[$t = 15$.]{{\includegraphics[width=0.32\textwidth,height=0.18\textwidth]{./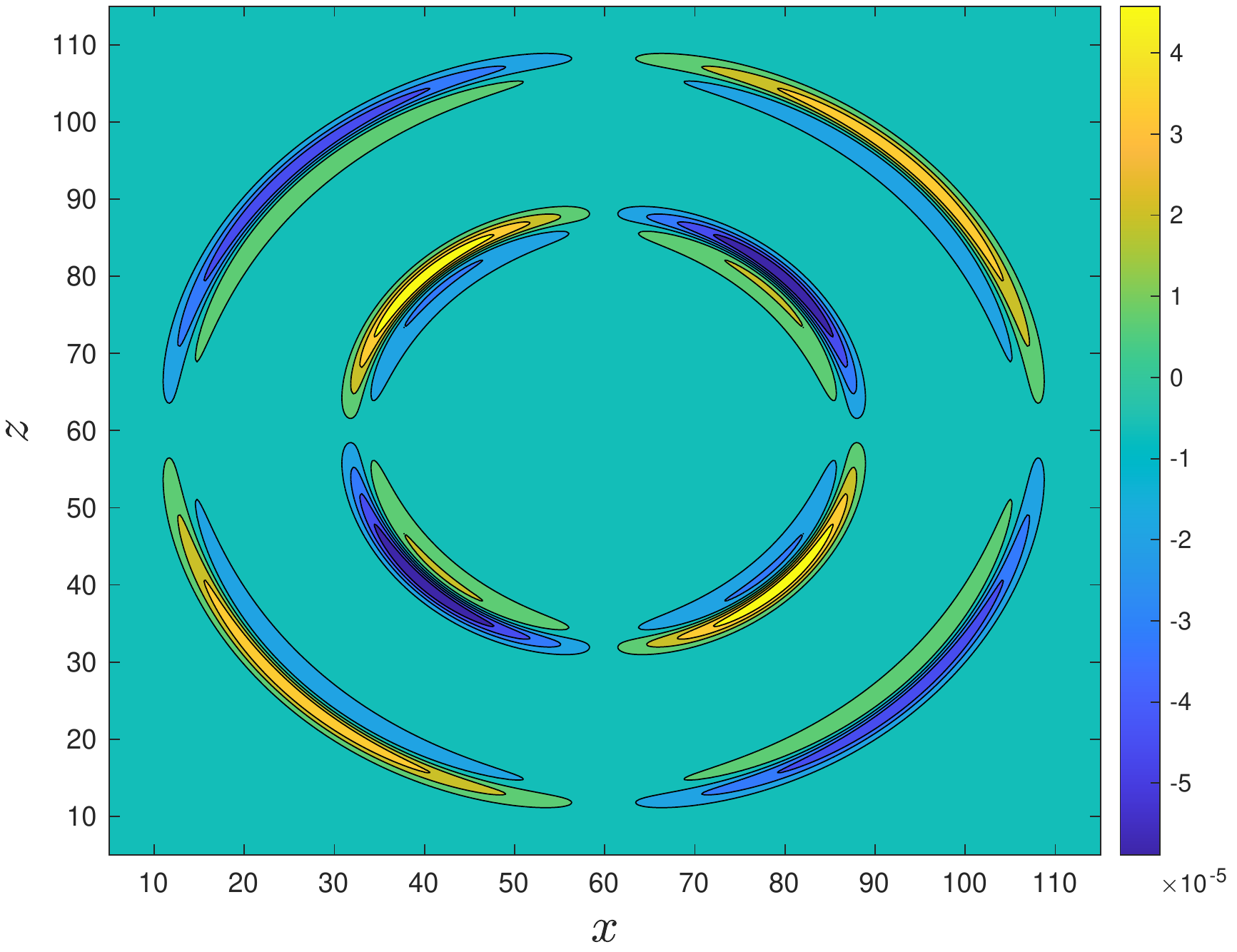}}
{\includegraphics[width=0.32\textwidth,height=0.18\textwidth]{./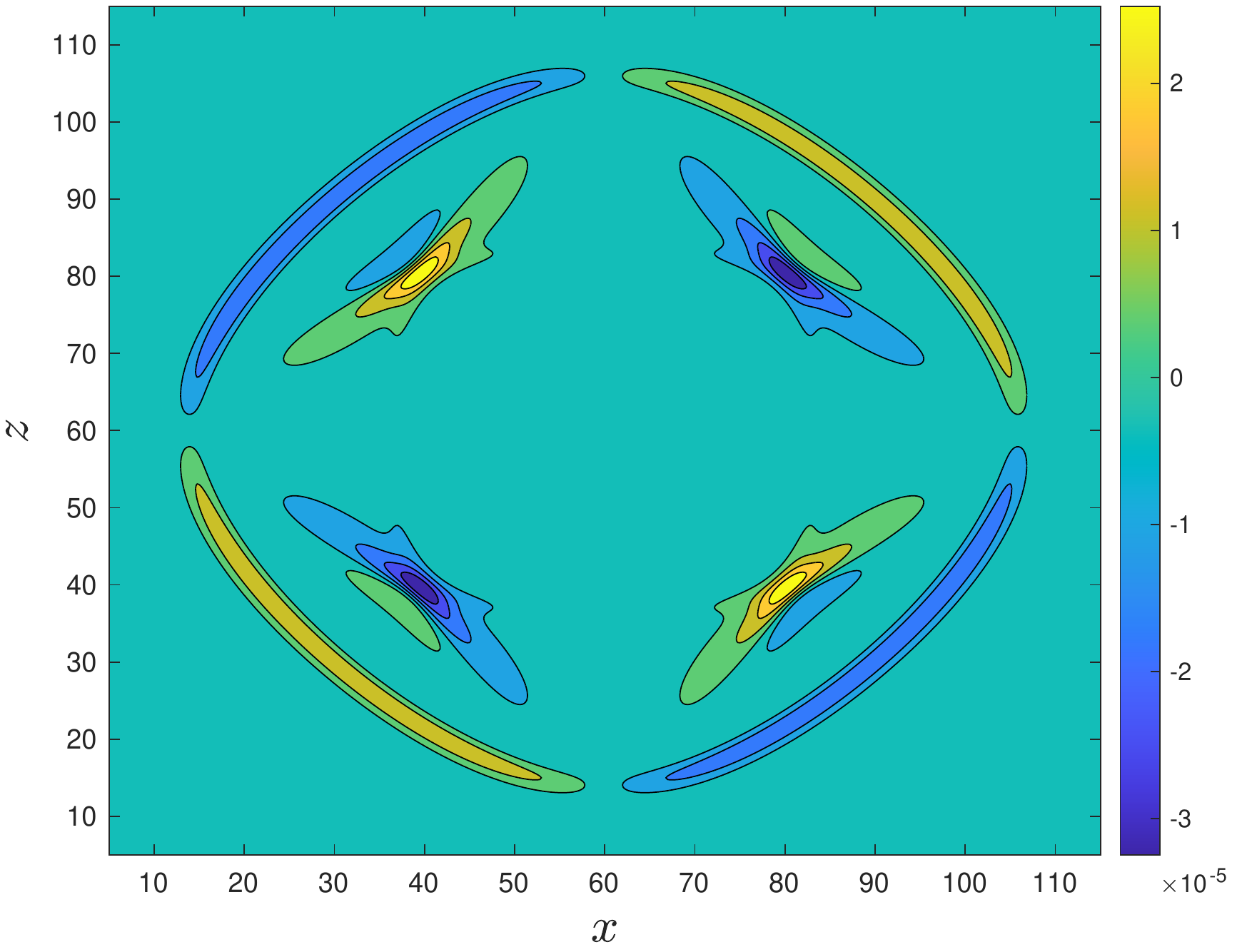}}
{\includegraphics[width=0.32\textwidth,height=0.18\textwidth]{./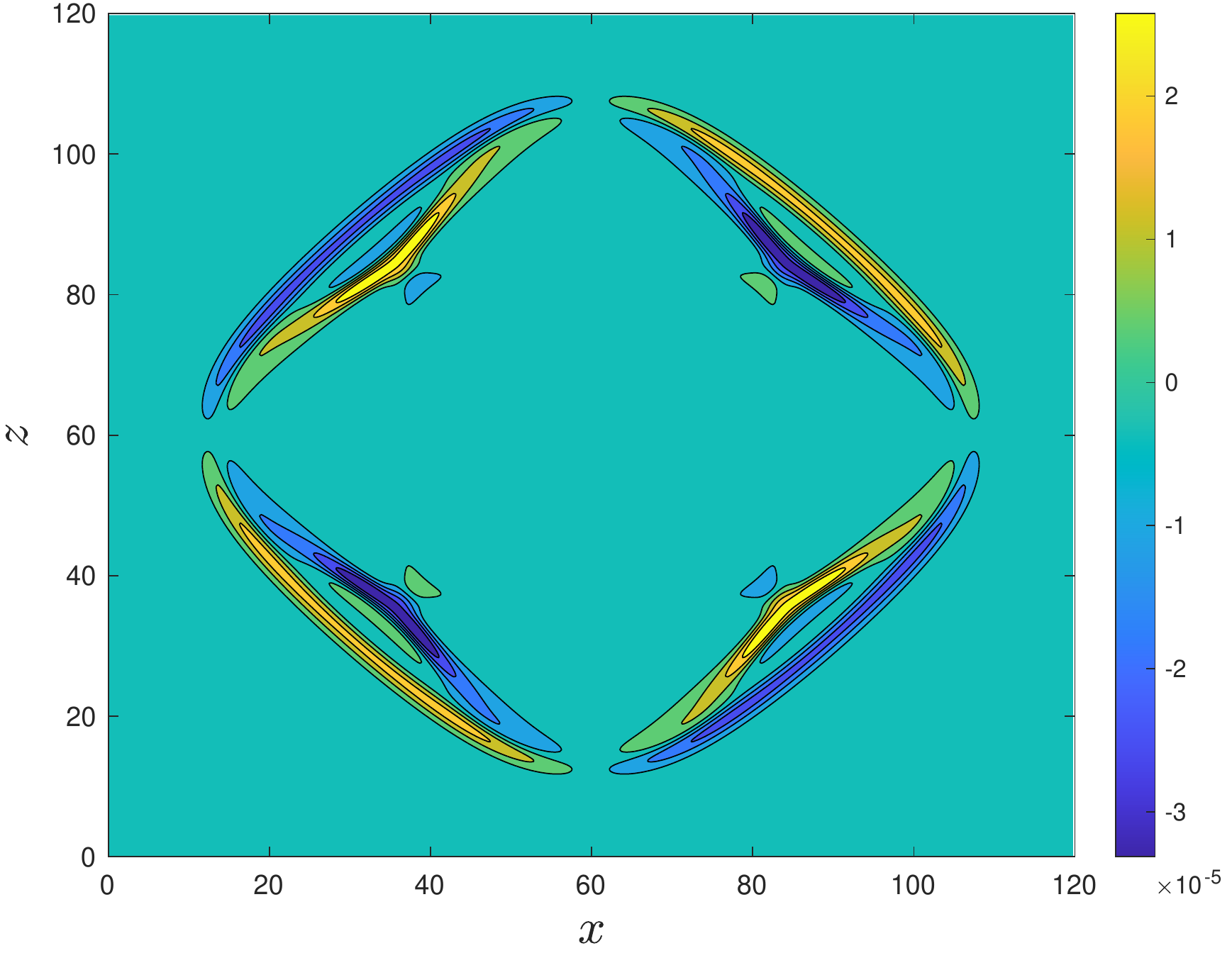}}
}
\caption{2-D constant-Q wave propagation: Displacement $u_3$. Left: elastic modeling, mid: viscoelastic modeling with quality factors $Q_P = 32$, $Q_S = 10$, right: viscoelastic modeling with quality factors $Q_P = 100$, $Q_S = 50$.}
\label{fig_attenuated_wave_u3}
\end{figure}

\begin{figure}[!h]
\centering
\subfigure[$t = 5$.]{{\includegraphics[width=0.32\textwidth,height=0.18\textwidth]{./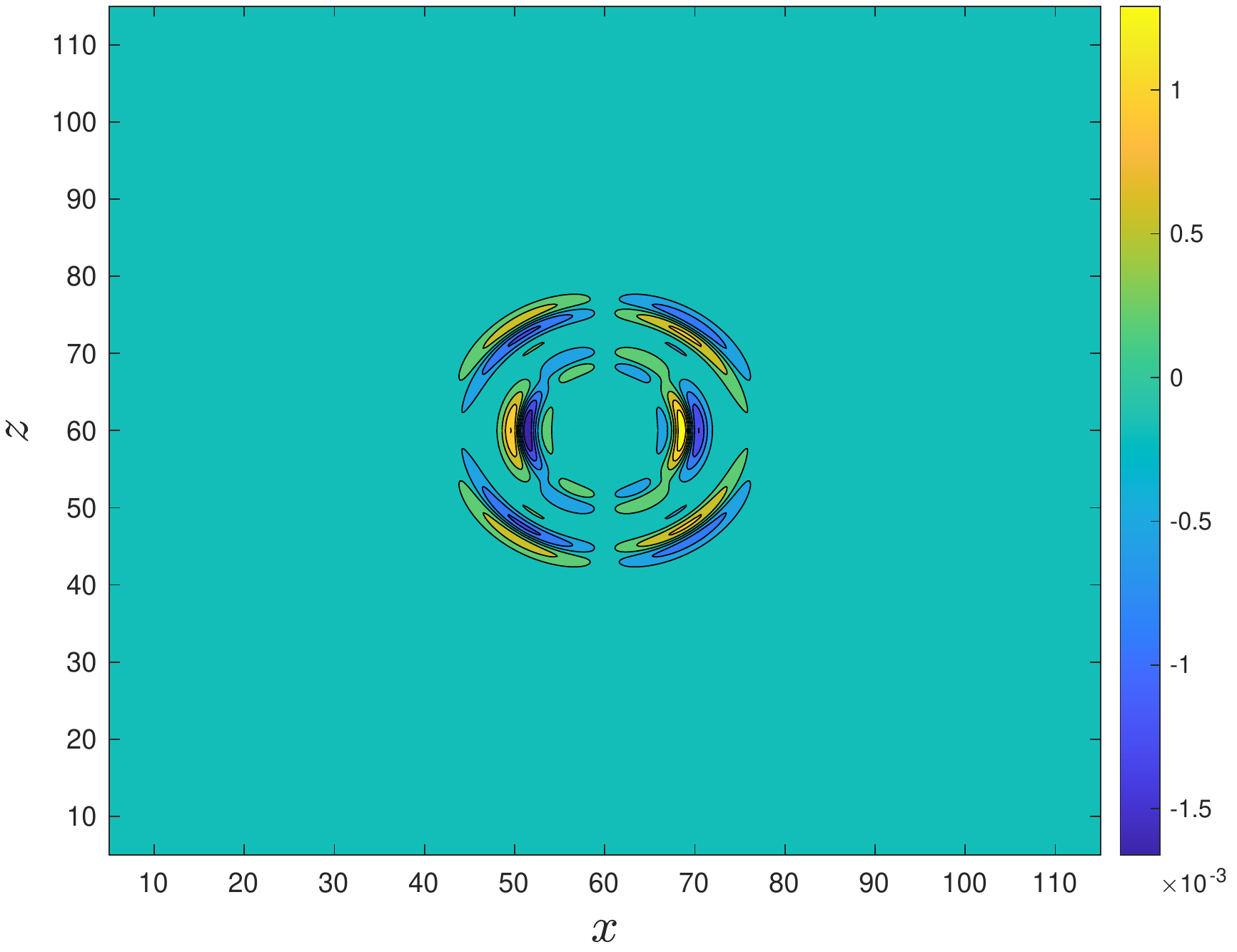}}
{\includegraphics[width=0.32\textwidth,height=0.18\textwidth]{./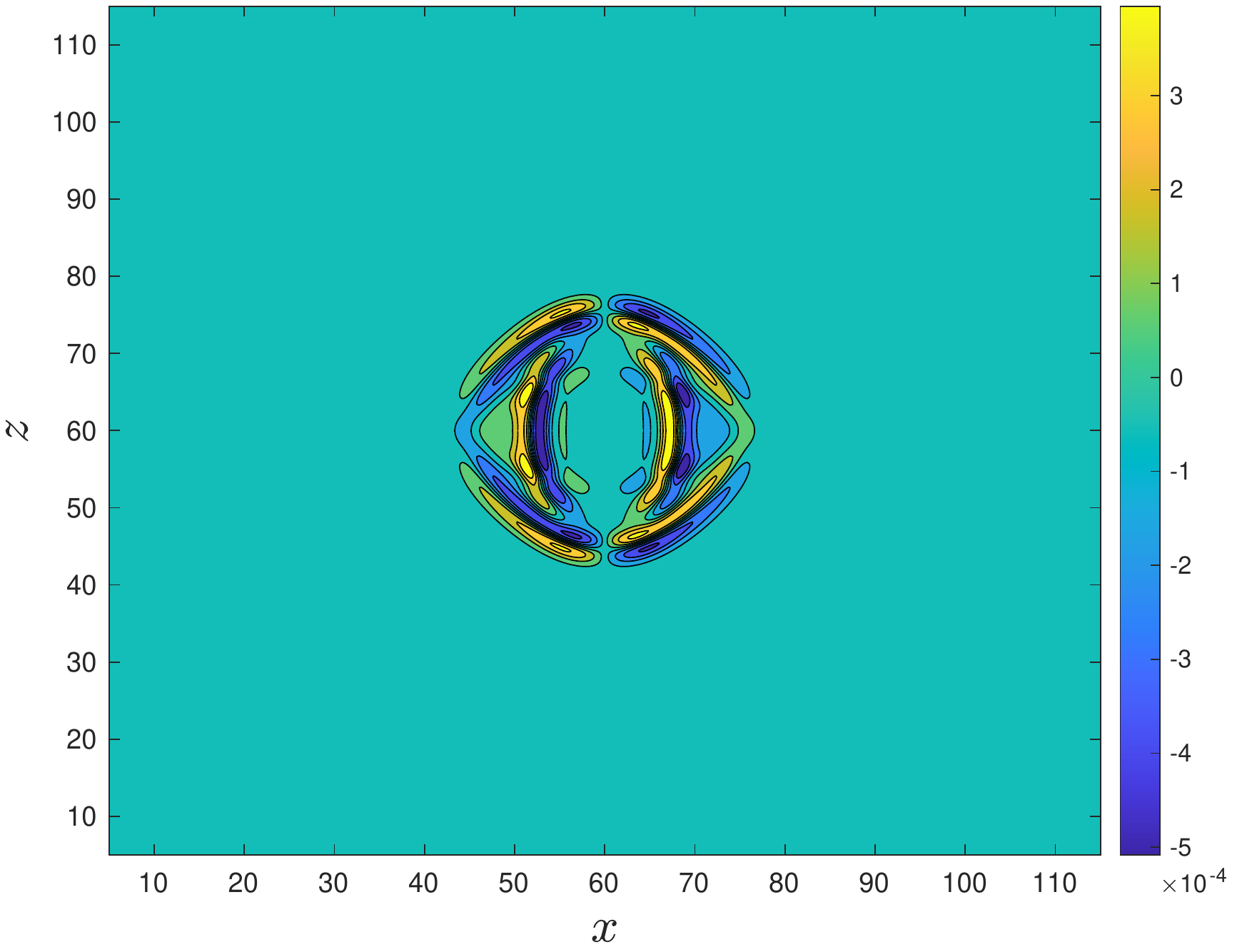}}
{\includegraphics[width=0.32\textwidth,height=0.18\textwidth]{./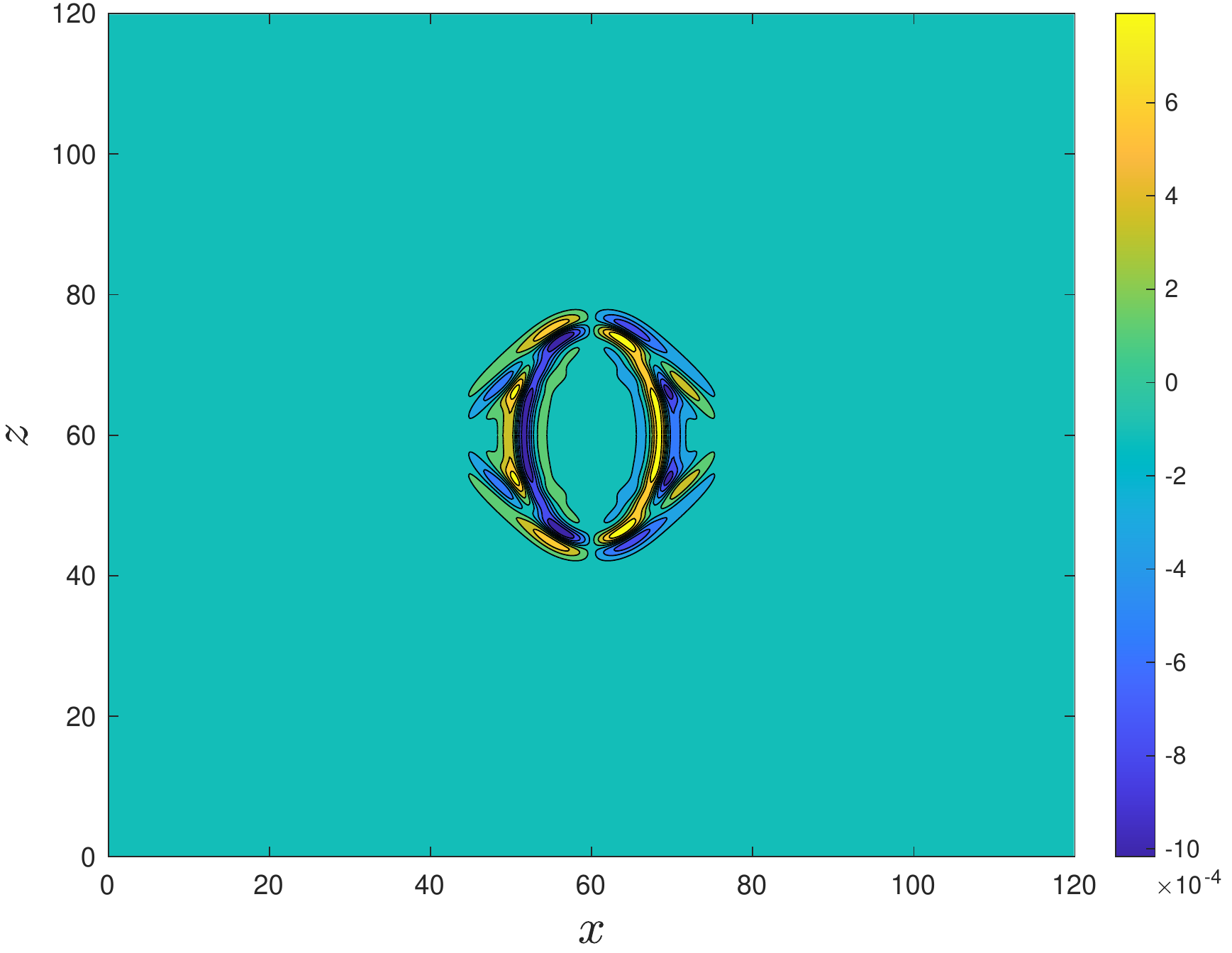}}
}
\\
\centering
\subfigure[$t = 10$.]{{\includegraphics[width=0.32\textwidth,height=0.18\textwidth]{./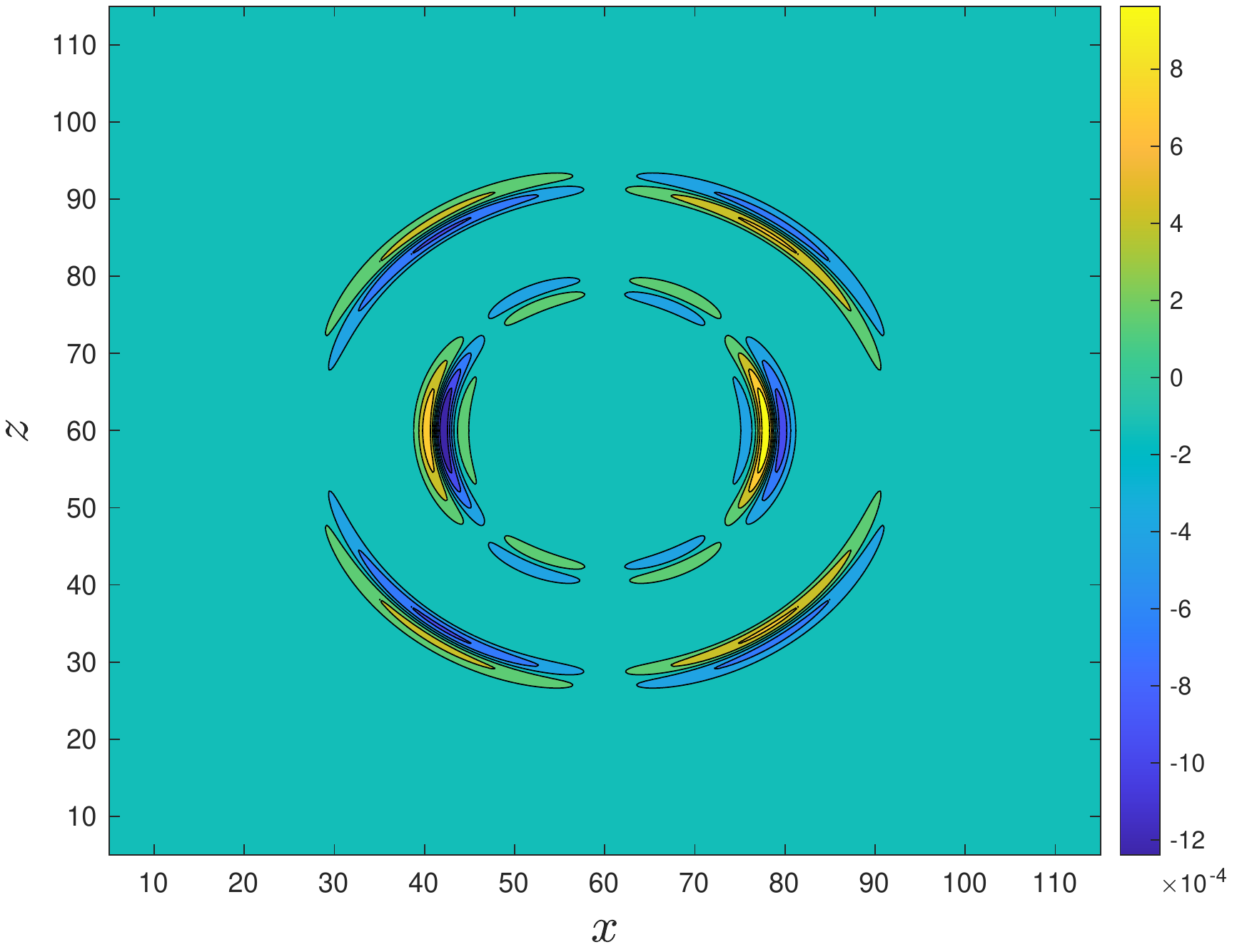}}
{\includegraphics[width=0.32\textwidth,height=0.18\textwidth]{./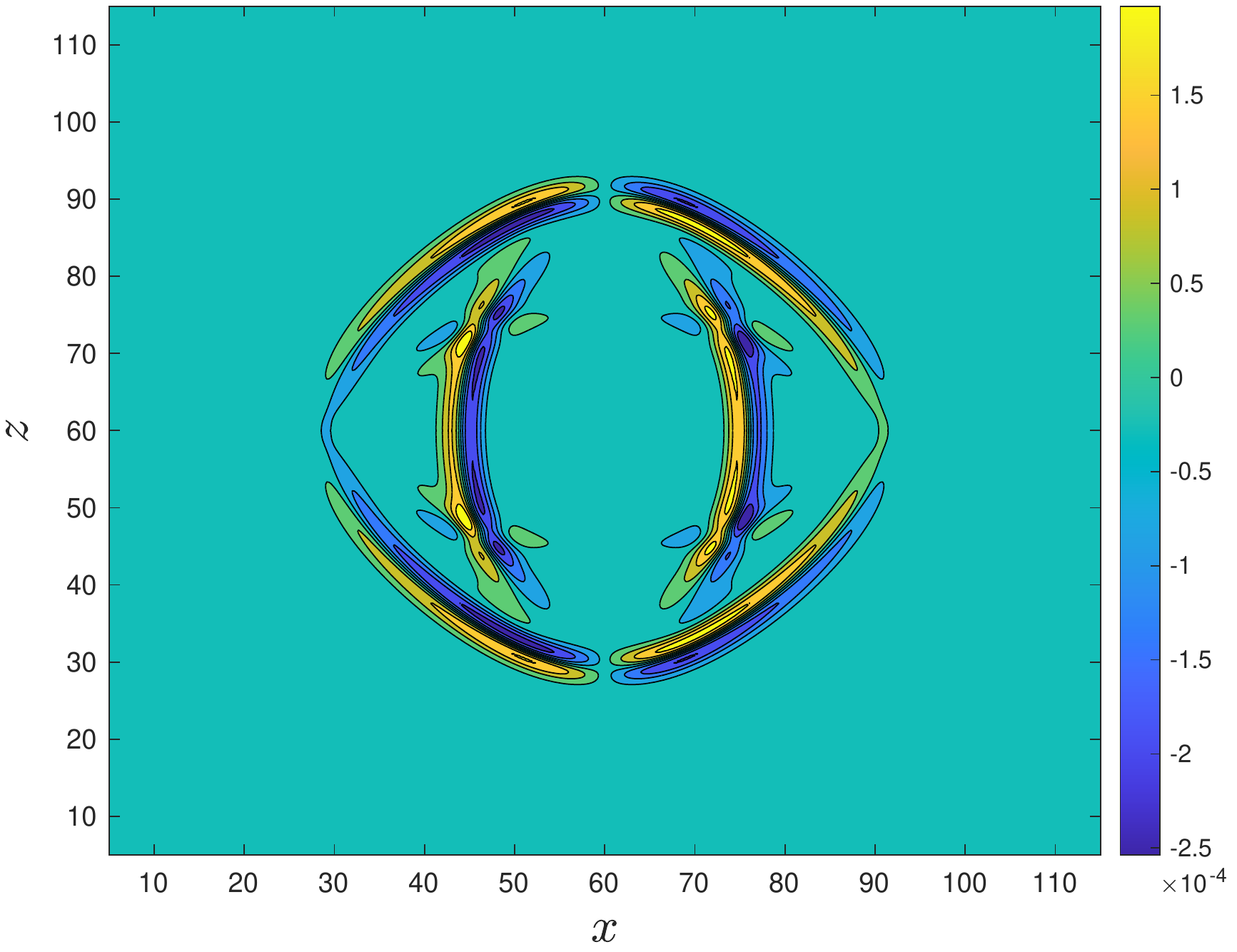}}
{\includegraphics[width=0.32\textwidth,height=0.18\textwidth]{./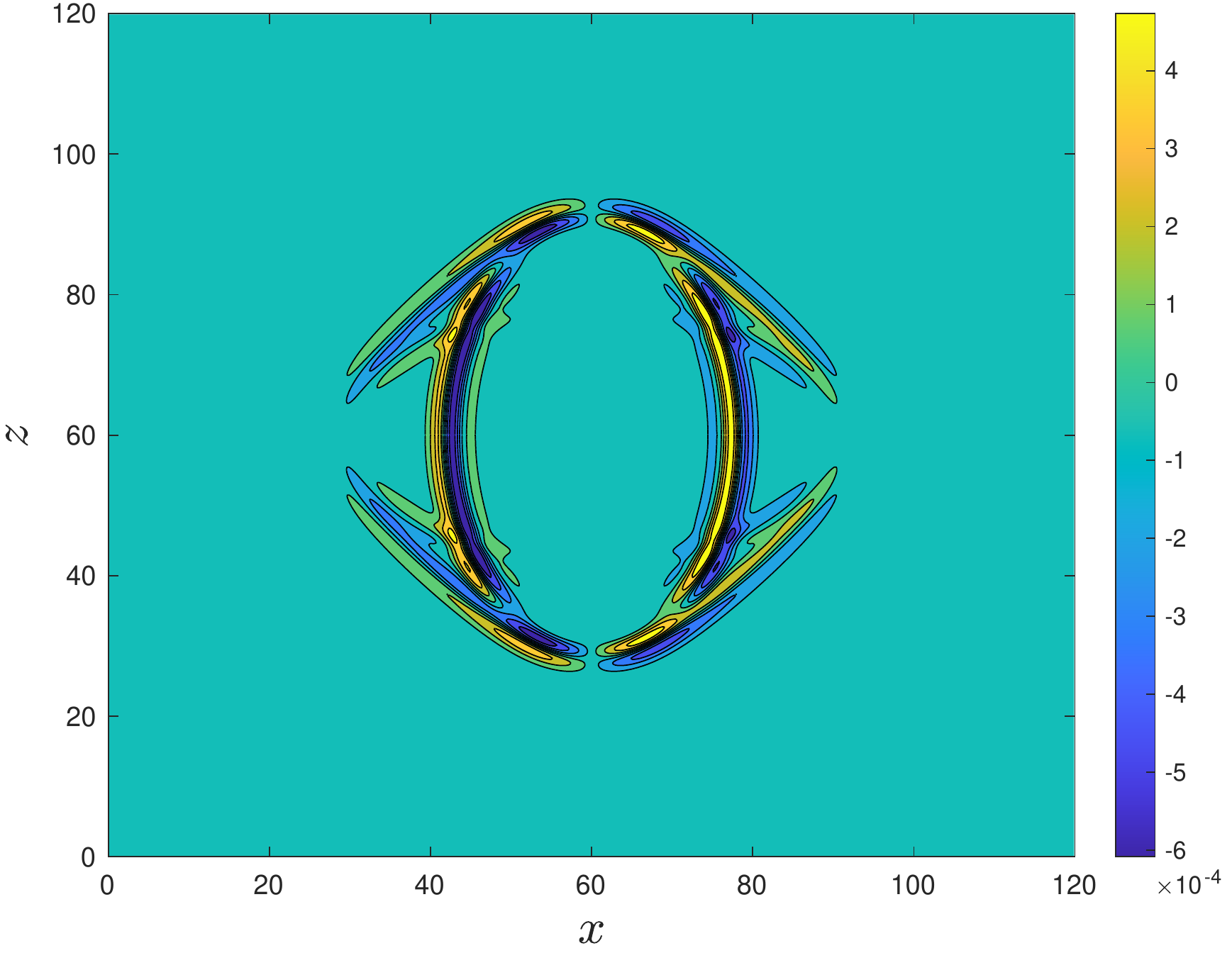}}
}
\\
\centering
\subfigure[$t = 15$.]{{\includegraphics[width=0.32\textwidth,height=0.18\textwidth]{./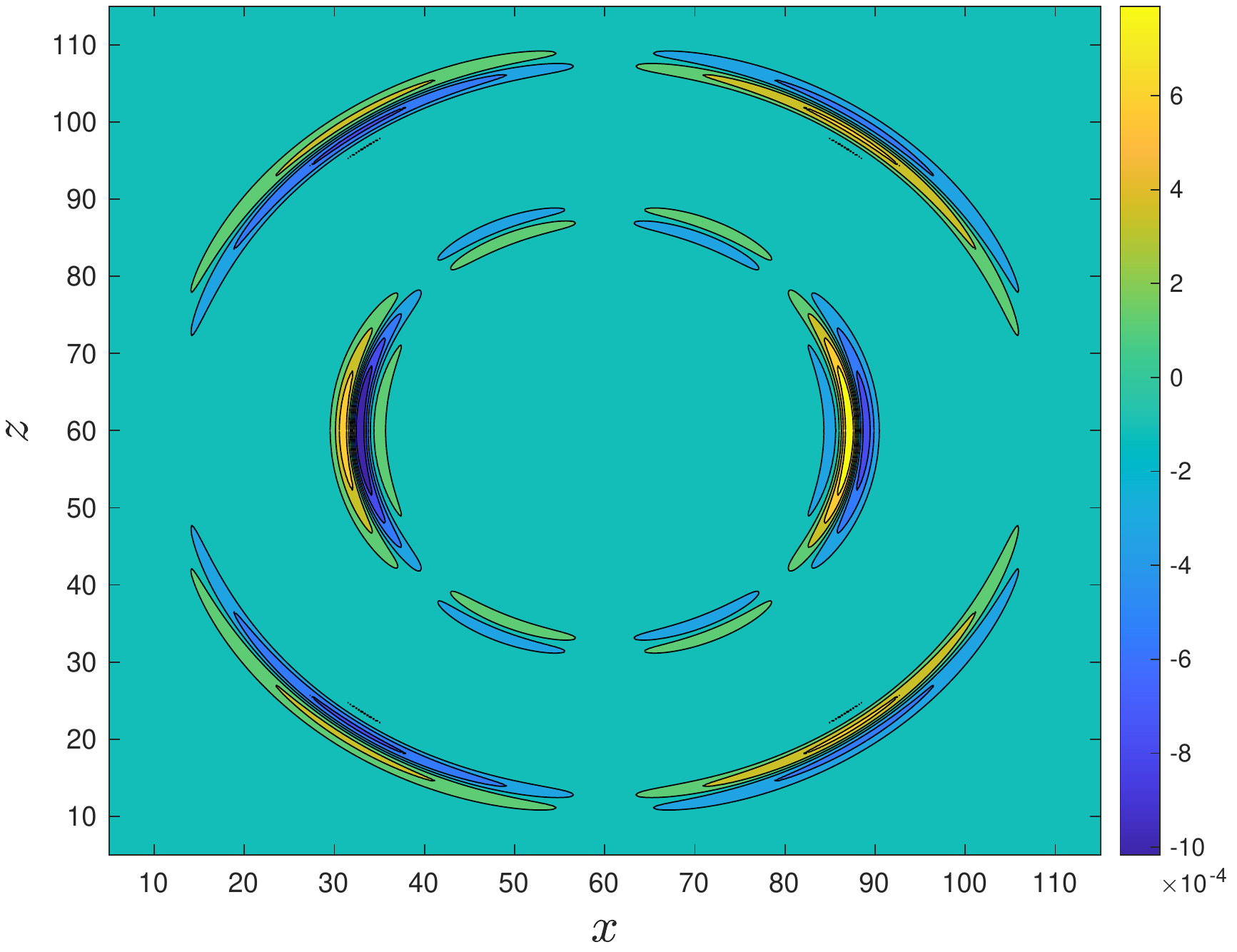}}
{\includegraphics[width=0.32\textwidth,height=0.18\textwidth]{./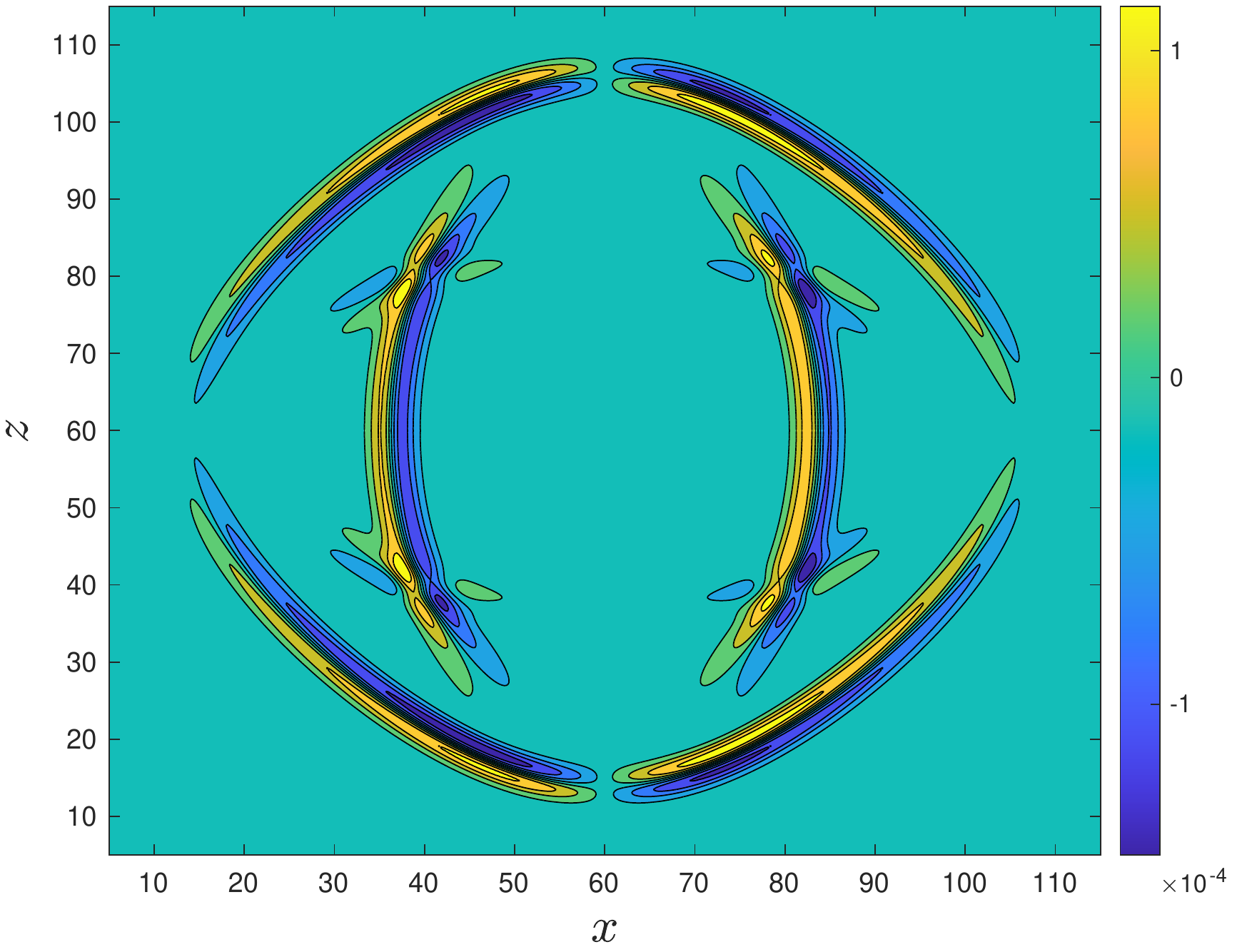}}
{\includegraphics[width=0.32\textwidth,height=0.18\textwidth]{./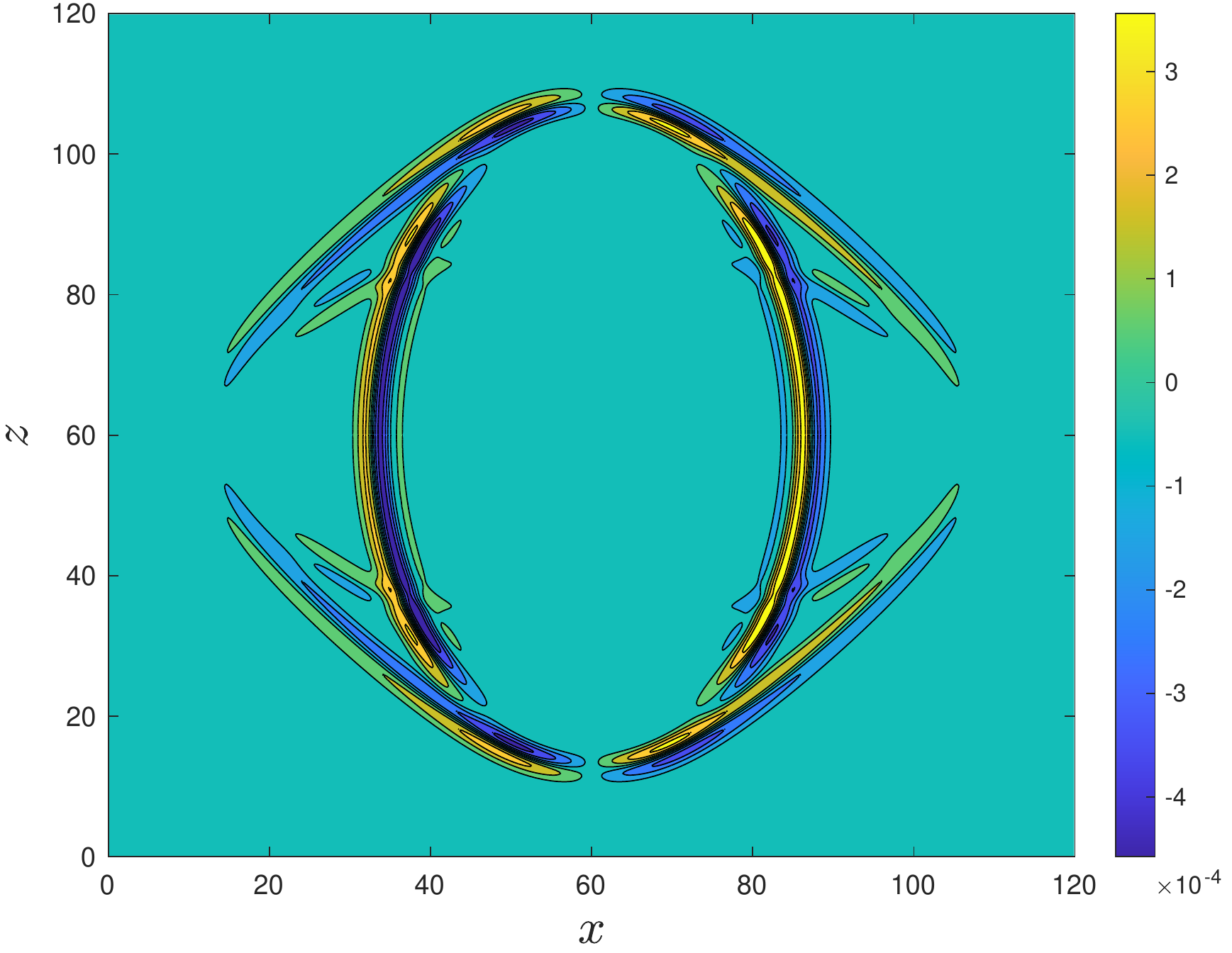}}
}
\caption{2-D constant-Q wave propagation: Stress $\sigma_{13}$. Left: elastic modeling, mid: viscoelastic modeling with quality factors $Q_P = 32$, $Q_S = 10$, right: viscoelastic modeling with quality factors $Q_P = 100$, $Q_S = 50$.}
\label{fig_attenuated_wave_stress13}
\end{figure}

For 2-D constant-Q wave equation under $Q_P = 32$, $Q_S = 10$, we focus on the study on the dynamical increments of numerical errors, as presented in Figure \ref{2d_evo_unscaled} associated a validation on the convergence of memory length $M_P = M_S = M$, and how the scaling technique ameliorates such problem. The numerical errors under different scaling factors and the averaged computational time are collected in Table \ref{2d_scaling}. A visualization of the effect of scaling is presented in  Figure \ref{fig_comparison}. According to the results, we have the following observations.

\begin{itemize}

\item[(1)] Although  the spectral convergence of the Laguerre-Gauss quadrature is validated, the dynamical increments of projection errors is still observed in Figure \ref{2d_evo_unscaled}. Compared with Figure \ref{fig_1d_memory_convergence}, the reduction of accuracy is more evident because of the non-decay property of the source term. This observation coincides with our theoretical prediction.

\item[(2)] With appropriate scaling technique, the dynamical increments of numerical errors can be dramatically alleviated, albeit not eliminated. One can see in Table \ref{2d_scaling} that a large scaling factor $\beta > 1$ may considerably enhance the numerical accuracy under the prescribed memory length without additional memory requirement or arithmetic complexity, while $\beta < 1$ may even lead to a reduction in accuracy.  In practice, $\beta$ should be chosen large enough in order to suppress the amplification induced by the source term (see $M=32,\beta = 16$), but too large $\beta$ is still not recommended as the truncation errors are augmented.

\item[(3)] In Figure \ref{fig_comparison}, we compare the unscaled and scaled results of $u_3$ under $M = 32$. Actually, the profile of wave propagation is very similar. The difference mainly lies in the height of waveforms as inadequate memory variables may underestimate the contribution of the singular integral and cause artificial loss of energy. Fortunately simply choosing a scaling factor $\beta >1$ can dramatically alleviate this problem.

\item[(4)] We can see in Table \ref{2d_evo_unscaled} that under $M= 32$ and $\beta =16$, SMOS can achieve a relative $L^\infty$-error about $2\%-4\%$ at final instant $T = 15$, and save about $82\%$ computational time and $97\%$ memory storage compared to $M+1 = 500$. The memory length can be even shortened for smaller $\gamma$. 

\end{itemize}

\begin{figure}[!h]
\centering
\subfigure[Convergence w.r.t. $M$.]{\includegraphics[width=0.48\textwidth,height=0.30\textwidth]{./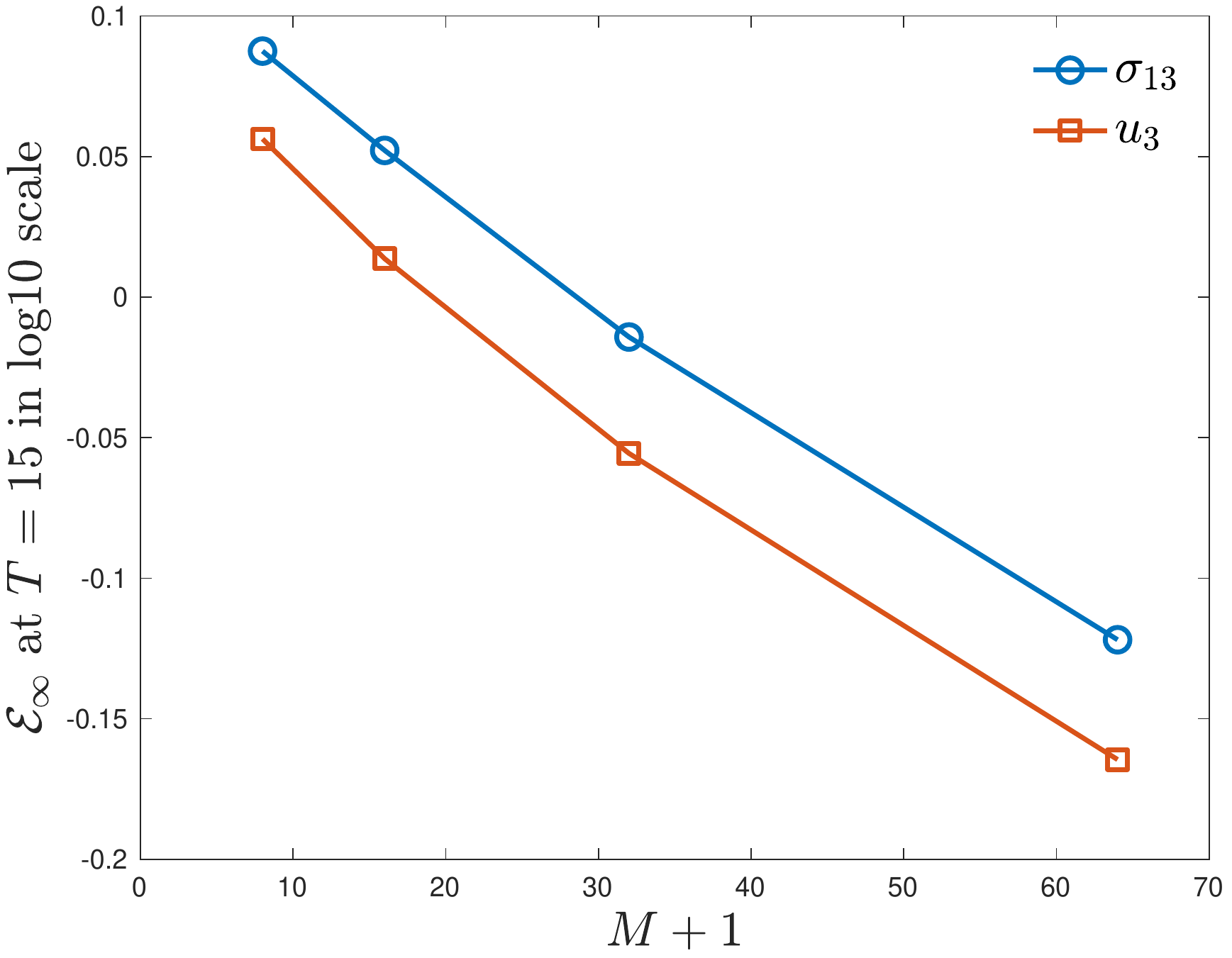}}
\\
\centering
\subfigure[Dynamical errors  in $u_3$.]{\includegraphics[width=0.48\textwidth,height=0.30\textwidth]{./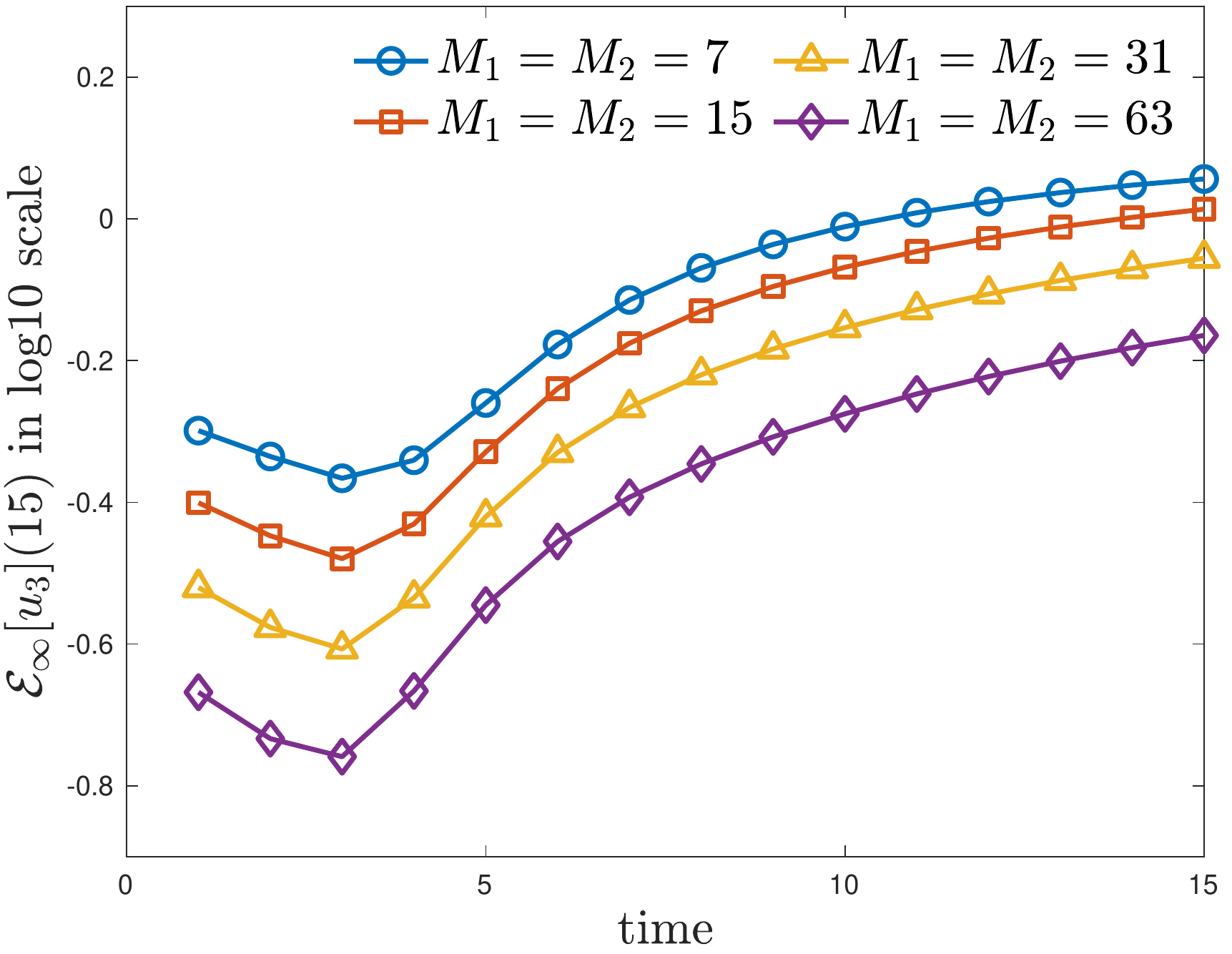}}
\subfigure[Dynamical errors in $\sigma_{13}$.]{\includegraphics[width=0.48\textwidth,height=0.30\textwidth]{./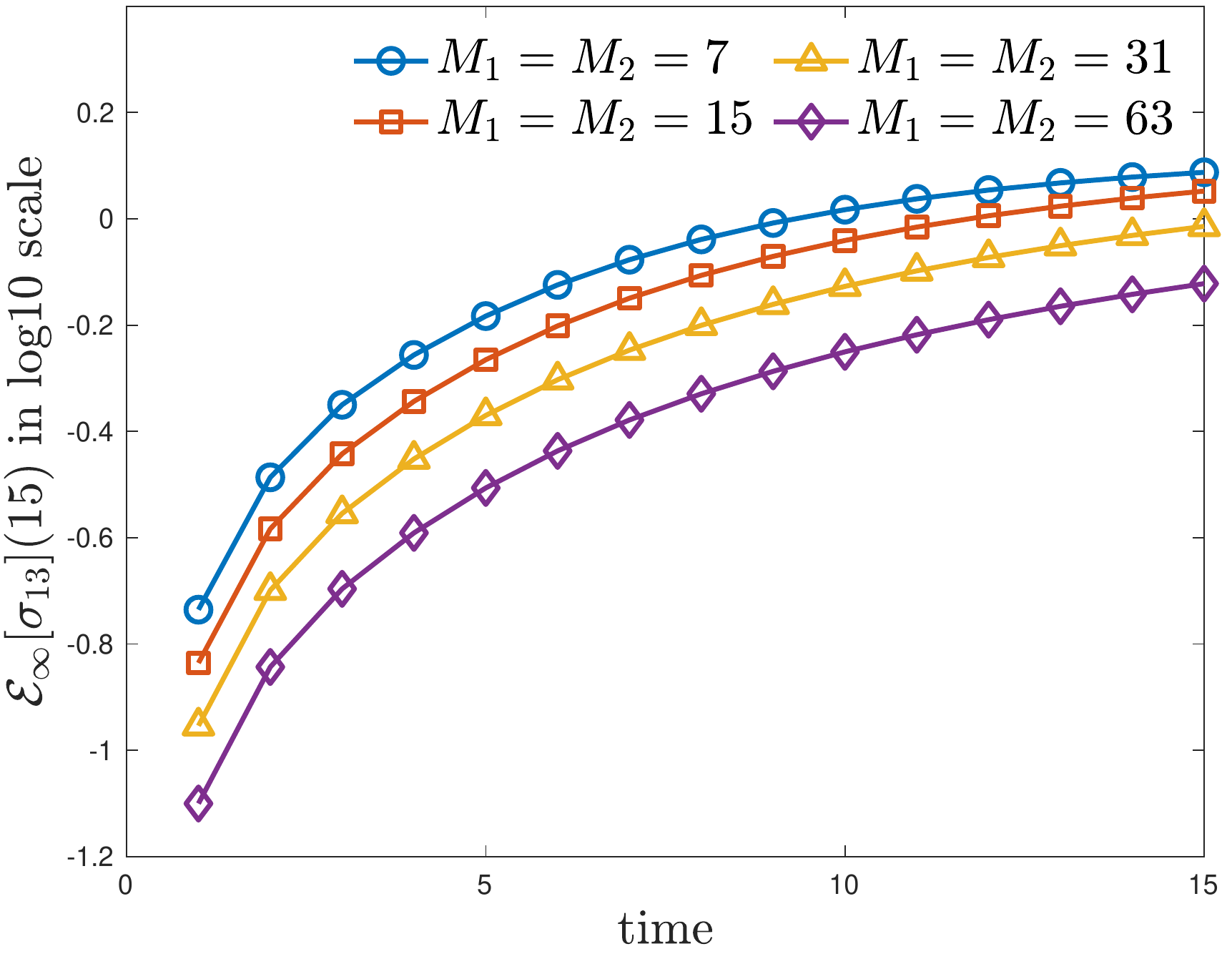}}
\caption{2-D constant-Q wave propagation: The dynamical errors under different memory lengths $M$. Although the spectral accuracy of the Laguerre-Gauss quadrature is verified, the dynamical increments of projection errors are clearly observed, regardless of choice of $M$.}
\label{2d_evo_unscaled}
\end{figure}


\begin{figure}[!h]
\centering
\subfigure[$M = 499$.]{\includegraphics[width=0.32\textwidth,height=0.18\textwidth]{./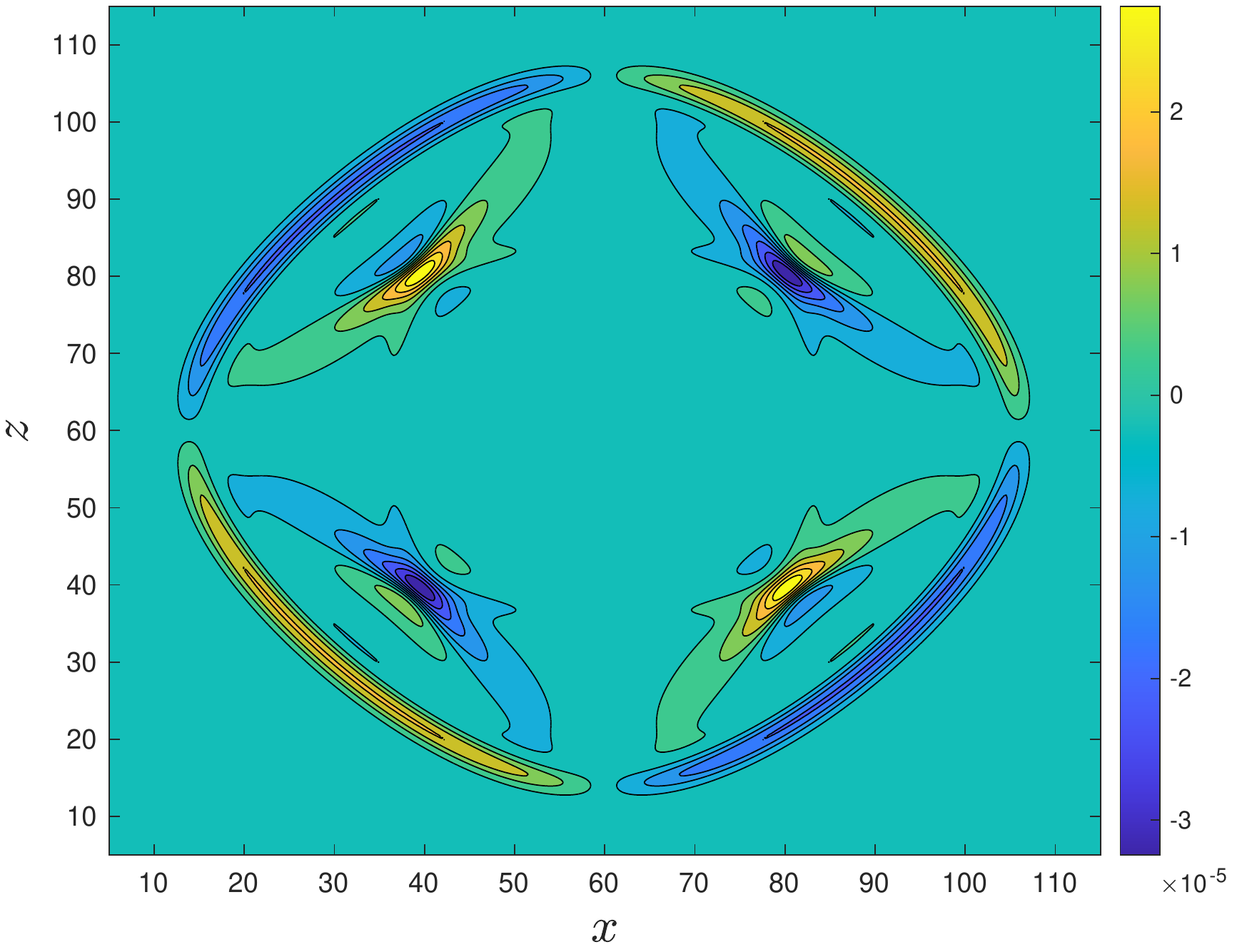}}
\subfigure[$M = 31$, unscaled.]{\includegraphics[width=0.32\textwidth,height=0.18\textwidth]{./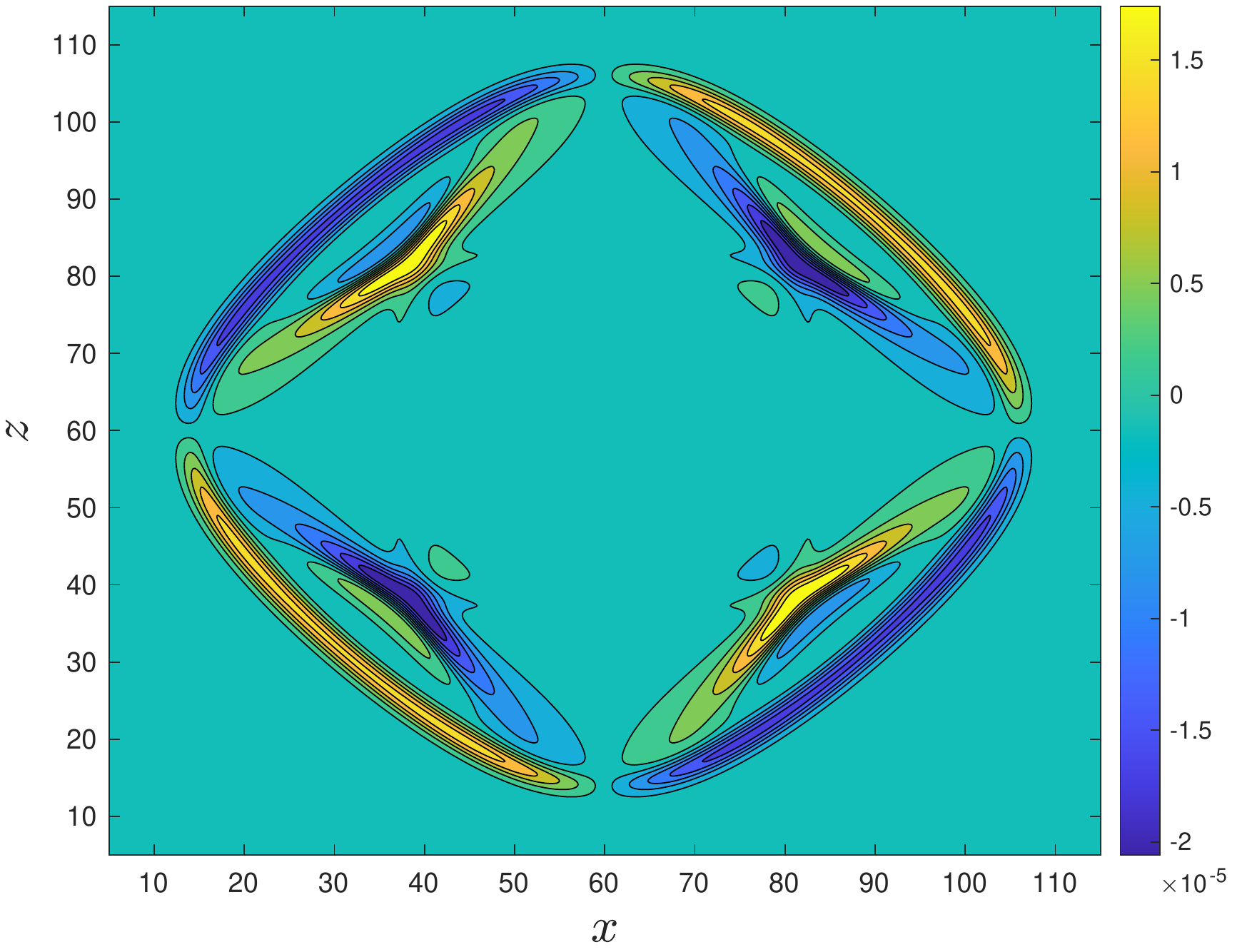}}
\subfigure[$M = 31$, $\beta =16$. ]{\includegraphics[width=0.32\textwidth,height=0.18\textwidth]{./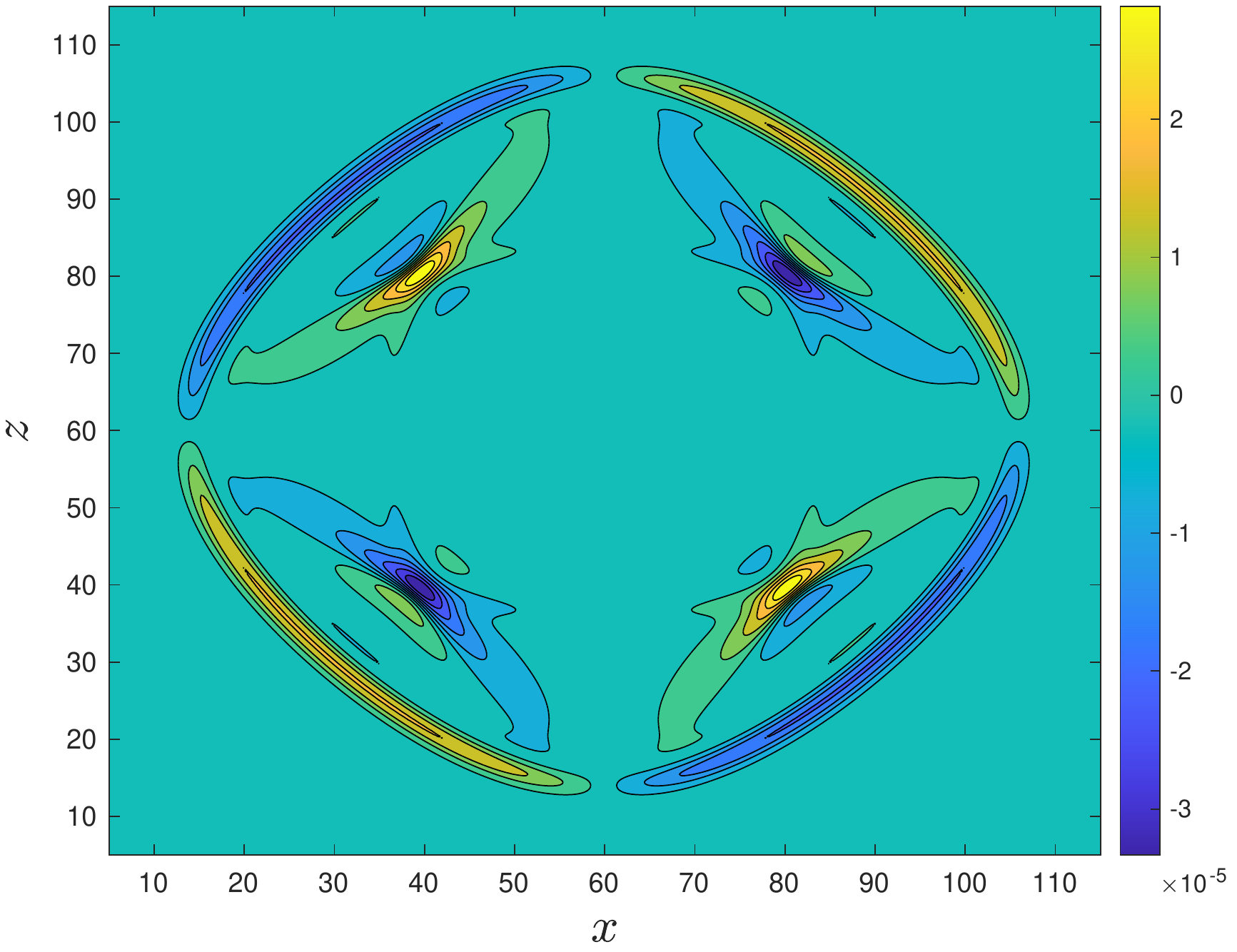}}
\caption{2-D constant-Q wave propagation: A visualization of numerical errors before and after scaling. The profile of wave fronts is similar. The main difference lies in the height, corresponding to the loss of energy.}
\label{fig_comparison}
\end{figure}

\begin{table}[!h]
  \centering
  \caption{\small 2-D constant-Q wave propagation: Relative errors in $u_3$ and $\sigma_{13}$ at $T= 15$ and the averaged computational time 
  ($\Delta t = 10^{-3}$, $N_x = N_z = 512$). The memory length for the reference solution is $M+1 = 500$ and the cpu time is $27994$s.\label{2d_scaling}
}
 \begin{lrbox}{\tablebox}
 \begin{tabular}{c|c|cccccc|c}
 \hline\hline
Metric	&\diagbox{$M$}{$\beta$} 	&0.6	&	1	&	2	&	8	&	16	& 	32	&	time(s)	\\
 \hline
\multirow{3}{*}{$\mathcal{E}_{\infty}[{u_3}] $}
&	8	&	1.161E+00	&	1.138E+00	&	1.035E+00	&	7.314E-01	&	6.897E-01	&	1.005E+00	&	3095.38 	\\
&	16	&	1.111E+00	&	1.032E+00	&	8.804E-01		&	4.669E-01	&	2.519E-01	&	3.652E-01		&	4023.80 	\\
&	32	&	9.969E-01		&	8.798E-01		&	6.847E-01		&	2.253E-01	&	2.237E-02	&	2.553E-01		&	5097.17 	\\
\hline
\multirow{3}{*}{$\mathcal{E}_{\infty}[{\sigma_{13}}]$}
&	8	&	1.280E+00	&	1.223E+00	&	1.135E+00	&	8.897E-01	&	1.204E+00	&	2.026E+00	&	3095.38 	\\
&	16	&	1.201E+00	&	1.128E+00	&	9.693E-01		&	5.265E-01	&	3.533E-01		&	7.727E-01		&	4023.80 	\\
&	32	&	1.091E+00	&	9.678E-01		&	7.555E-01		&	2.497E-01	&	4.222E-02		&	3.041E-01		&	5097.17 	\\
\hline\hline
 \end{tabular}
\end{lrbox}
\scalebox{0.7}{\usebox{\tablebox}}
\end{table}

\section{Conclusion and discussion}
\label{sec.conclusion}

We propose a short-memory operator splitting (SMOS) scheme for solving the constant-Q viscoelastic wave equation with the fractional order $2\gamma$ of the Caputo fractional derivative much smaller than 1. Two main features of SMOS are presented. One is to shorten the effective memory length via the extension problem of the fractional derivative. The other is to use a scaling factor $\beta > 1$ to alleviate the dynamical increments of the projection errors. Combining with the operator splitting scheme to exploit the exact solution of the auxiliary dynamics, our scheme can maintain the numerical accuracy, as well as significantly alleviate both memory requirement and arithmetic complexity. Our ongoing work is to apply SMOS in real 2-D and 3-D seismic applications. We would like to discuss such issues in our future work, such as combining SMOS with recently developed frequency-adaptive scaling technique \cite{XiaShaoChou2020}.

\section*{Acknowledgement}
This research was supported by  the Project funded by China Postdoctoral Science Foundation (Nos. 2020TQ0011), the Foundation of China under Grants (Nos.~11901354),  the Natural Science Foundation of Shandong Province for Excellent Youth Scholars (Nos.~ZR2020YQ02), the Taishan Scholars Program of Shandong Province of China (Nos.~tsqn201909044) and the High-performance Computing Platform of Peking University. The authors would like to thank Prof. Sihong Shao for fruitful discussions.

\appendix

\section{Evaluation of the Mainardi function by the generalized Laguerre-Gauss quadrature}
\label{sec.Mainardi_function}

In order to calculate the Mainardi  function, we need to investigate the numerical algorithms for approximating the Wright function. Direct calculation of  contour integral representation is somehow difficult. Instead, one can start from an equivalent form. 
\begin{lemma}[Theorem 2.1 in \cite{Luchko2008}]
 For $-1 < \lambda < 0$ and $\mu < 1$, for $x > 0$
\begin{equation}
W_{\lambda, \mu}(- x) = \frac{1}{\pi} \int_0^{+\infty} K_{\lambda, \mu}(-x, r ) \D r,
\end{equation}
where the kernel reads that
\begin{equation}\label{kernel}
K_{\lambda, \mu}(x, r ) = r^{-\mu} \me^{-r} \left[\me^{x r^{-\lambda} \cos(\lambda \pi)} \sin(x r^{-\lambda} \sin(\pi \lambda) + \pi \mu)\right].
\end{equation}
\end{lemma}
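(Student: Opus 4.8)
The plan is to derive the representation directly from the power-series definition $W_{\lambda,\mu}(z) = \sum_{k=0}^{\infty} \frac{z^{k}}{k!\,\Gamma(\lambda k + \mu)}$ combined with Hankel's contour formula for the reciprocal Gamma function,
\[
\frac{1}{\Gamma(s)} = \frac{1}{2\pi\mi}\int_{\mathcal{H}} \me^{\zeta}\,\zeta^{-s}\,\D\zeta,
\]
where $\mathcal{H}$ is a Hankel contour coming in from $-\infty$ below the branch cut on the negative real axis, encircling the origin counterclockwise, and returning to $-\infty$ above the cut, with $\zeta^{-s}$ the principal branch. First I would insert this formula with $s = \lambda k + \mu$, interchange summation and contour integration, and sum the resulting power series as an exponential, obtaining
\[
W_{\lambda,\mu}(-x) = \frac{1}{2\pi\mi}\int_{\mathcal{H}} \me^{\zeta}\,\zeta^{-\mu}\sum_{k=0}^{\infty}\frac{(-x\,\zeta^{-\lambda})^{k}}{k!}\,\D\zeta = \frac{1}{2\pi\mi}\int_{\mathcal{H}} \me^{\,\zeta - x\zeta^{-\lambda}}\,\zeta^{-\mu}\,\D\zeta .
\]
This collapses the double sum into a single contour integral, which is the crux of the reduction.

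Next I would flatten $\mathcal{H}$ onto the two banks of the negative real axis, parametrizing the lower (incoming) bank by $\zeta = r\me^{-\mi\pi}$ and the upper (outgoing) bank by $\zeta = r\me^{\mi\pi}$ for $r\in(0,\infty)$, and sending the radius of the small circle about the origin to zero. On both banks $\me^{\zeta} = \me^{-r}$, whereas $\zeta^{-\mu} = r^{-\mu}\me^{\mp\mi\pi\mu}$ and $\zeta^{-\lambda} = r^{-\lambda}\me^{\mp\mi\pi\lambda}$ acquire conjugate phases; consequently $-x\zeta^{-\lambda}$ produces a common real damping factor $\me^{-x r^{-\lambda}\cos(\pi\lambda)}$ together with opposite oscillatory factors $\me^{\mp\mi x r^{-\lambda}\sin(\pi\lambda)}$. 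Adding the two oppositely oriented banks with the prefactor $1/(2\pi\mi)$ collapses the complex exponentials into a single real sine through Euler's formula, yielding
\[
W_{\lambda,\mu}(-x) = \frac{1}{\pi}\int_{0}^{\infty} r^{-\mu}\,\me^{-r}\,\me^{-x r^{-\lambda}\cos(\pi\lambda)}\,\sin\!\big(\pi\mu - x r^{-\lambda}\sin(\pi\lambda)\big)\,\D r ,
\]
which is exactly $\tfrac{1}{\pi}\int_0^{\infty} K_{\lambda,\mu}(-x,r)\,\D r$ once one substitutes $x \mapsto -x$ in the kernel \eqref{kernel}.

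The two analytic points needing care, which I expect to be the main obstacle, are the justification of the term-by-term integration and of the contour flattening. The hypothesis $-1 < \lambda < 0$ is precisely what makes both legitimate: it forces $-\lambda\in(0,1)$, so $\zeta^{-\lambda}\to 0$ as $\zeta\to 0$ and $\me^{-x\zeta^{-\lambda}}$ stays bounded near the origin, while $\me^{\zeta}$ still decays as $\mathrm{Re}\,\zeta\to-\infty$, giving absolute convergence on $\mathcal{H}$ and licensing Fubini. The hypothesis $\mu < 1$ guarantees that $r^{-\mu}$ is integrable at $r=0$ and that the small circle of radius $\varepsilon$ contributes only $\mathcal{O}(\varepsilon^{1-\mu})\to 0$, so that shrinking it leaves no residual term. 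I would verify these estimates explicitly to confirm that the contributions near the origin and at infinity drop out, after which the identity follows. As a bonus, this computation also explains the quadrature used subsequently, since $K_{\lambda,\mu}(-x,r)$ carries the weight $r^{-\mu}\me^{-r}$ tailored to generalized Laguerre-Gauss evaluation.
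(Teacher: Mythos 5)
Your proof is correct and is essentially the same argument as in the paper's source: the paper states this lemma without proof, citing Theorem~2.1 of Luchko (2008), and that theorem's derivation is precisely your route --- the Hankel-contour integral $W_{\lambda,\mu}(-x)=\frac{1}{2\pi\mathbbm{i}}\int_{\mathcal{H}}\me^{\zeta-x\zeta^{-\lambda}}\zeta^{-\mu}\,\D\zeta$ (obtained from the series via $1/\Gamma$), flattened onto the two banks of the negative real cut, with $\mu<1$ killing the $\mathcal{O}(\varepsilon^{1-\mu})$ circle contribution and $-1<\lambda<0$ ensuring the interchange of sum and integral. One small caution on wording: for $-1<\lambda<-1/2$ one has $\cos(\pi\lambda)<0$, so $\me^{-xr^{-\lambda}\cos(\pi\lambda)}$ is a \emph{growing} factor rather than a damping one, and the absolute convergence of the bank integrals (and of the integrand on $\mathcal{H}$, which you attribute to the decay of $\me^{\zeta}$ alone) rests on $\me^{-r}$ dominating $\me^{xr^{|\lambda|}|\cos(\pi\lambda)|}$ because $|\lambda|<1$; your estimates should be phrased to cover this case, after which the argument goes through unchanged.
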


When $\lambda = -1/2$ and $\cos(\lambda \pi) = 0$, the integral can be approximated by the generalized Gauss-Laguerre quadrature
\begin{equation}\label{Gaussian_estimate}
W_{\lambda, \nu}(- x)\approx \frac{1}{\pi} \sum_{j= 0}^{M_r} \omega_{j}^{(-\mu)}  \left[\me^{-x (r_j^{(-\mu)})^{-\lambda}} \sin(-x (r_j^{(-\mu)})^{-\lambda} \sin(\pi \lambda) + \pi \mu)\right].
\end{equation}
so that it  is more computational feasible compared with the contour integral.

When $-1/2 < \lambda < 0$, we find that the above formula might not get accurate results when $x$ is large because it fails to capture the proper asymptotic of the integral. In order to tackle the situation for large $x$, it is convenient to introduce a scaling,
\begin{equation}
x r^{-\lambda} = y, \quad r = x^{1/\lambda} y^{-1/\lambda} , \quad \D r = -\frac{x^{1/\lambda}}{\lambda}   y^{-1/\lambda - 1} \D y
\end{equation}
so that for $x > 0$ and $\cos(\lambda \pi) > 0$,
\begin{equation*}
W_{\lambda, \mu}(-x) = -\frac{x^{-\frac{\mu-1}{\lambda}}}{\pi \lambda} \int_0^{+\infty} y^{\frac{\mu-1}{\lambda} -1} \me^{- y \cos(\lambda \pi) }  \left[ \me^{-\left(\frac{x}{y}\right)^{1/\lambda}} \sin(- y \sin(\pi \lambda) + \pi \mu)  \right] \D y.
\end{equation*}
Thus
\begin{equation}\label{subGaussian_estimate}
W_{\lambda, \mu}( - x) \approx -\frac{x^{-(s_1+1)}}{\pi \lambda} \sum_{j=0}^{M_r} \omega_{j}^{(s_1, s_2)}\me^{-\left(\frac{x}{y_j^{(s_1, s_2)}}\right)^{1/\lambda}} \sin(- y_j^{(s_1, s_2)} \sin(\pi \lambda) + \pi \mu),
\end{equation}
where $s_1 = \frac{\mu-1}{\lambda} - 1$, $s_2 = \cos(\lambda \pi)$.

When $-1 < \lambda < -1/2$ and $\cos(\lambda \pi) < 0$, the exponent in the kernel $K_{\lambda, \mu}(-x, r )$ is no longer monotone. This causes troubles in the convergence of the Laguerre-Gauss quadrature. Instead, one needs to combine both the integral representation for small $x$ and asymptotic expansion for  large $x$.

For small $x < -\frac{1}{\cos(\lambda \pi)}$ , it starts from
 \begin{equation*}
\begin{split}
W_{\lambda, \mu}(-x) = &\frac{1}{\pi} \int_0^{+\infty} r^{-\mu} \me^{-r} \left[\me^{-x r^{-\lambda} \cos(\lambda \pi)} \sin( - x r^{-\lambda} \sin(\pi \lambda) + \pi \mu)\right] \D r \\
= &\frac{1}{\pi} \int_0^{+\infty} r^{-\mu} \me^{-r(1 + x\cos(\lambda \pi))} \left[\me^{x (r - r^{-\lambda}) \cos(\lambda \pi)} \sin( - x r^{-\lambda} \sin(\pi \lambda) + \pi \mu)\right] \D r,
\end{split}
\end{equation*}
so that
\begin{equation}\label{super_Gaussian_part1}
W_{\lambda, \nu}(-x)\approx \frac{1}{\pi} \sum_{j= 0}^{M_r} \omega_{j}^{(s_1, s_2)}  \left[\me^{x (r_j^{(s_1, s_2)} - (r_j^{(s_1, s_2)})^{-\lambda}) \cos(\lambda \pi)} \sin(-x (r_j^{(-\mu)})^{-\lambda} \sin(\pi \lambda) + \pi \mu)\right],
\end{equation}
where $s_1 = -\mu, s_2 = 1 + x\cos(\lambda \pi)$.

For the regime $x \ge -\frac{1}{\cos(\lambda \pi)}$, we introduce an appropriate threshold $x_{\lambda}$ depending on $\lambda$. For $x \le x_{\lambda}$, we use another scaling
\begin{equation}
-x r^{-\lambda}\cos(\lambda \pi) = y, \quad r = \tilde{x}^{1/\lambda} y^{-1/\lambda} , \quad \D r = -\frac{\tilde{x}^{1/\lambda}}{\lambda}   y^{-1/\lambda - 1} \D y.
\end{equation}
with $\tilde{x} = - \cos(\lambda \pi) x$. That yields that
\begin{equation*}
\begin{split}
W_{\lambda, \mu}(-x) &= -\frac{\tilde{x}^{-\frac{\mu-1}{\lambda}}}{\pi \lambda} \int_0^{+\infty} y^{\frac{\mu-1}{\lambda} -1} \me^{y }  \left[ \me^{-\left({\tilde{x}}/{y}\right)^{1/\lambda}} \sin(y \tan(\pi \lambda) + \pi \mu)  \right] \D y \\
&= -\frac{\tilde{x}^{-\frac{\mu-1}{\lambda}}}{\pi \lambda} \int_{y_0}^{+\infty} y^{\frac{\mu-1}{\lambda} -1} \me^{-(\tilde{x} -1) y  }   \left[ \me^{- \left( \left(\tilde{x}/{y}\right)^{1/\lambda}  -  \tilde{x} y\right)} \sin(y \tan(\pi \lambda) + \pi \mu)  \right] \D y.
\end{split}
\end{equation*}
 so that it can also be evaluated by the Laguerre-Gauss quadrature rule,
\begin{equation}\label{super_Gaussian_part2}
W_{\lambda, \mu}( - x) \approx -\frac{\tilde{x}^{-(s_1+1)}}{\pi \lambda} \sum_{j=0}^{M_r} \omega_{j}^{(s_1, s_2)}\me^{-\left(\frac{\tilde{x}}{y_j^{(s_1, s_2)}}\right)^{1/\lambda} +  \tilde{x} y_j^{(s_1, s_2)}} \sin(y_j^{(s_1, s_2)} \tan(\pi \lambda) + \pi \mu),
\end{equation}
where $s_1 = \frac{\mu-1}{\lambda} - 1$, $s_2 = - x \cos(\lambda x) - 1$. In particular, when $\mu = 1 - \nu$, $\lambda = -\nu$, the singular kernel $y^{\frac{\mu-1}{\lambda} - 1}$ vanishes and $s_1 = 0$.



Finally, to avoid the numerical instability for large $x \ge  x_{\lambda}$, we can utilize the asymptotic expansion of the Mainardi function given by the saddle-point approximation \cite{bk:Mainardi2010}
\begin{equation}\label{super_Gaussian_asymptotic}
M_{\nu}\left(\frac{z}{\nu}\right) \sim \frac{1}{\sqrt{2\pi(1-\nu)}} z^{\frac{\nu -	1/2}{1- \nu}} \exp\left( - \frac{1-\nu}{\nu} z^{\frac{1}{1-\nu}}\right)
\end{equation}
Despite its simple form, the leading term in the asymptotic expansion provides a very accurate approximation for sufficiently large $x$ as the Mainardi function decays very rapidly when $|x| \to \infty$.

Figure \ref{Mainardi_function} plots the Mainardi functions $M_{\nu}(x) = W_{\nu, 1-\nu}(x)$ for different $\nu$. When $\nu = -1/2$, it is just the Gaussian kernel. When $\nu > -1/2$, it corresponds to the Green's function for the fractional diffusion equations. The more interesting regime is
 $-1< \nu < -1/2$, where the Mainardi function becomes a double-well function and exhibits the wave dispersion property.
\begin{figure}[!h]
\centering
\subfigure[Mainardi function for fractional diffusion, $0 < \nu < 1/2$.]{
{\includegraphics[width=0.49\textwidth,height=0.29\textwidth]{./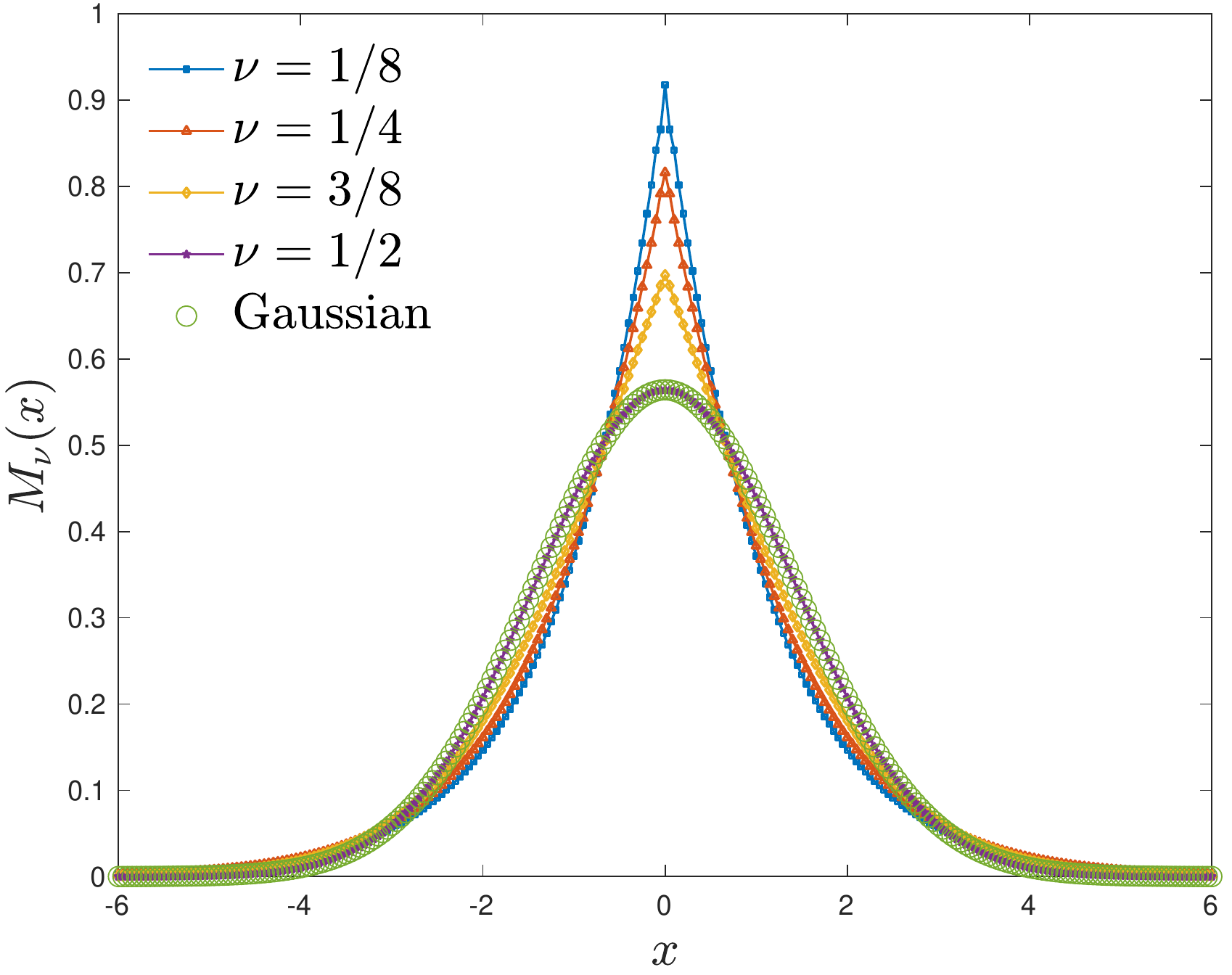}}
{\includegraphics[width=0.49\textwidth,height=0.29\textwidth]{./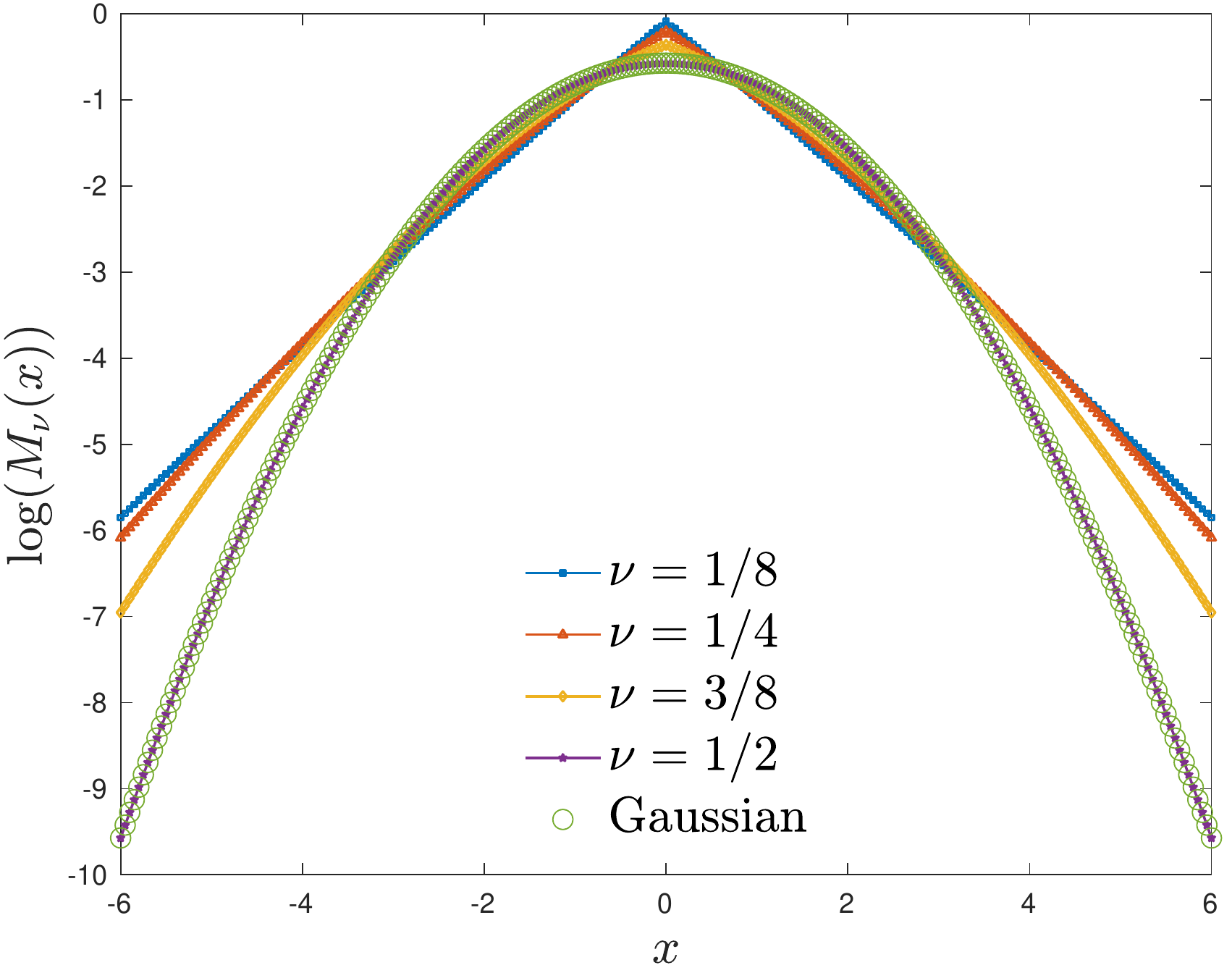}}}
\\
\centering
\subfigure[Mainardi function for fractional wave propagation, $1/2 < \nu < 1$.]{
{\includegraphics[width=0.49\textwidth,height=0.29\textwidth]{./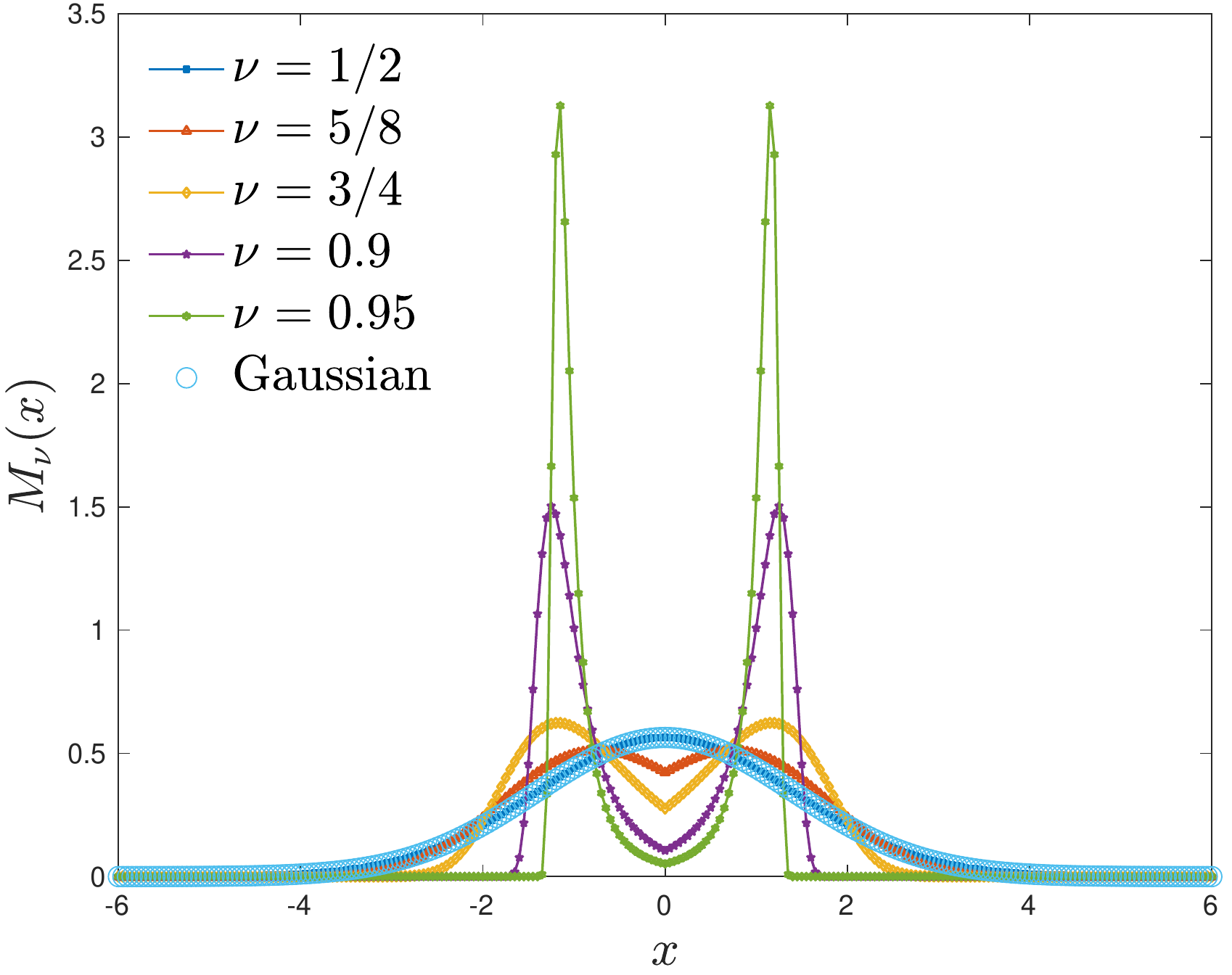}}
{\includegraphics[width=0.49\textwidth,height=0.29\textwidth]{./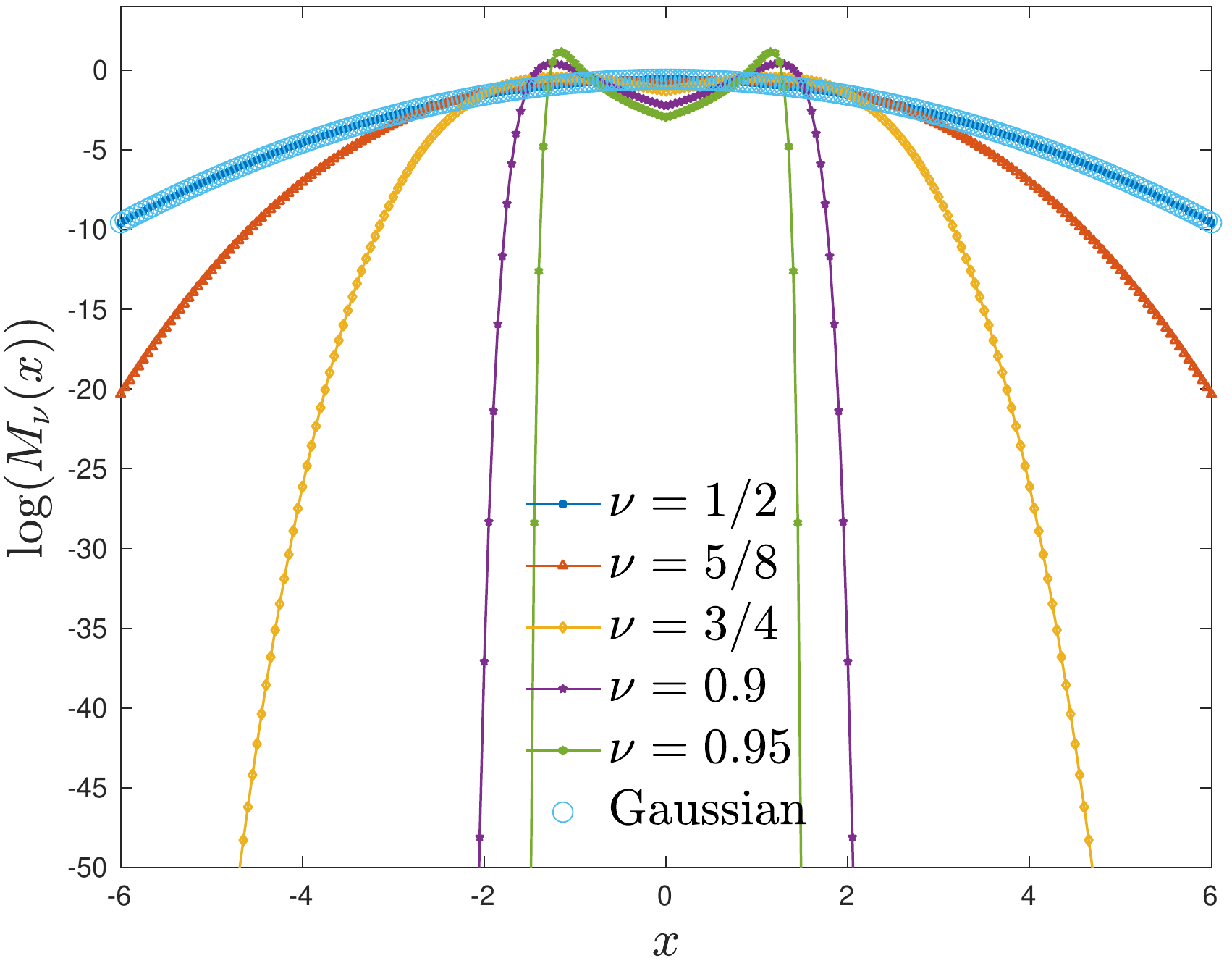}}}
\caption{The Mainardi function (left) and that in logarithm scale (right) for fractional diffusion equations and fractional wave equations. }
\label{Mainardi_function}
\end{figure}

The choice of threshold $x_{\lambda}$ can be made by searching a point that matches the Laguerre-Gauss quadrature and the asymptotic leading term. For instance, we have found that $x_{\lambda,\mu} = 1.852$ for $\nu = 0.9$, $x_{\lambda,\mu} = 1.41$ for $\nu = 0.95$ and $x_{\lambda,\mu} = 1.087$ for $\nu = 0.99$ can achieve a difference less than $10^{-7}$ between the results produced by the Laguerre-Gauss quadrature and the asymptotic leading term.

To summarize, we use the following program to estimate the Mainardi function. 

\begin{itemize}

\item[(1)] When $\lambda = -\frac{1}{2}$ , we use the formula \eqref{Gaussian_estimate}.

\item[(2)] When $-\frac{1}{2} < \lambda < 0$, we use the formula \eqref{subGaussian_estimate}.

\item[(3)] When $-1 < \lambda < -\frac{1}{2}$ and $x < -\frac{1}{\cos(\lambda \pi)}$, we use the formula \eqref{super_Gaussian_part1}.

\item[(4)] When  $-1 < \lambda < -\frac{1}{2}$ and $ -\frac{1}{\cos(\lambda \pi)} \le x < x_{\lambda}$, we use the formula \eqref{super_Gaussian_part2}.

\item[(5)] When $-1 < \lambda < -\frac{1}{2}$ and $x \ge x_{\lambda}$, we use the asymptotic formula \eqref{super_Gaussian_asymptotic}.

\end{itemize}

\bibliographystyle{elsarticle-num} 


\end{thebibliography}
\end{document}